\newcommand {\emptycomment}[1]{} 
\newcommand{\nc}{\newcommand}
\newcommand{\delete}[1]{}
\nc{\mlabel}[1]{\label{#1}}  
\nc{\mcite}[1]{\cite{#1}}  
\nc{\mref}[1]{\ref{#1}}  
\nc{\meqref}[1]{\eqref{#1}} 
\nc{\mbibitem}[1]{\bibitem{#1}} 
\nc{\mlabel}[1]{\label{#1}  
{\hfill \hspace{1cm}{\bf{{\ }\hfill(#1)}}}}
\nc{\mcite}[1]{\cite{#1}{{\bf{{\ }(#1)}}}}  
\nc{\mref}[1]{\ref{#1}{{\bf{{\ }(#1)}}}}  
\nc{\meqref}[1]{\eqref{#1}{{\bf{{\ }(#1)}}}} 
\nc{\mbibitem}[1]{\bibitem[\bf #1]{#1}} 
\newtheorem{thm}{Theorem}[section]
\newtheorem{lem}[thm]{Lemma}
\newtheorem{cor}[thm]{Corollary}
\newtheorem{pro}[thm]{Proposition}
\theoremstyle{definition}
\newtheorem{defi}[thm]{Definition}
\newtheorem{ex}[thm]{Example}
\newtheorem{rmk}[thm]{Remark}
\nc{\tred}[1]{\textcolor{red}{#1}}
\nc{\tblue}[1]{\textcolor{blue}{#1}}
\nc{\tgreen}[1]{\textcolor{green}{#1}}
\nc{\tpurple}[1]{\textcolor{purple}{#1}}
\nc{\btred}[1]{\textcolor{red}{\bf #1}}
\nc{\btblue}[1]{\textcolor{blue}{\bf #1}}
\nc{\btgreen}[1]{\textcolor{green}{\bf #1}}
\nc{\btpurple}[1]{\textcolor{purple}{\bf #1}}
\nc{\zis}[1]{\textcolor{purple}{#1}}
\nc{\cm}[1]{\textcolor{red}{Chengming:#1}}
\nc{\li}[1]{\textcolor{blue}{#1}}
\nc{\lir}[1]{\textcolor{blue}{Li:#1}}
\nc{\sy}[1]{\textcolor{purple}{Siyuan:#1}}
\nc{\name}[1]{{\bf #1}}
\nc{\acd}{associative crossed datum\xspace}
\nc{\acds}{associative crossed data\xspace}
\nc{\mmod}[1]{{\ (\mathrm{mod}~{#1})}}
\nc{\qcl}{QCL\xspace} 
\nc{\qcls}{QCLs\xspace}
\nc{\bfK}{\mathbf{K}}
\nc{\dera}{{d_1}}
\nc{\derb}{{d_2}}
\nc{\vspa}{\vspace{-.1cm}}
\nc{\vspb}{\vspace{-.2cm}}
\nc{\vspc}{\vspace{-.3cm}}
\nc{\vspd}{\vspace{-.4cm}}
\nc{\vspe}{\vspace{-.5cm}}
\nc{\twovec}[2]{\left(\begin{array}{c} #1 \\ #2\end{array} \right )}
\nc{\threevec}[3]{\left(\begin{array}{c} #1 \\ #2 \\ #3 \end{array}\right )}
\nc{\twomatrix}[4]{\left(\begin{array}{cc} #1 & #2\\ #3 & #4 \end{array} \right)}
\nc{\threematrix}[9]{{\left(\begin{matrix} #1 & #2 & #3\\ #4 & #5 & #6 \\ #7 & #8 & #9 \end{matrix} \right)}}
\nc{\twodet}[4]{\left|\begin{array}{cc} #1 & #2\\ #3 & #4 \end{array} \right|}
\nc{\rk}{\mathrm{r}}
\nc{\tforall}{\text{ for all }}
\nc{\svec}[2]{{\tiny\left(\begin{matrix}#1\\
#2\end{matrix}\right)\,}}  
\nc{\ssvec}[2]{{\tiny\left(\begin{matrix}#1\\
#2\end{matrix}\right)\,}} 
\nc{\typeI}{local cocycle $3$-Lie bialgebra\xspace}
\nc{\typeIs}{local cocycle $3$-Lie bialgebras\xspace}
\nc{\typeII}{double construction $3$-Lie bialgebra\xspace}
\nc{\typeIIs}{double construction $3$-Lie bialgebras\xspace}
\nc{\bia}{{$\mathcal{P}$-bimodule ${\bf k}$-algebra}\xspace}
\nc{\bias}{{$\mathcal{P}$-bimodule ${\bf k}$-algebras}\xspace}
\nc{\rmi}{{\mathrm{I}}}
\nc{\rmii}{{\mathrm{II}}}
\nc{\rmiii}{{\mathrm{III}}}
\nc{\pr}{{\mathrm{pr}}}
\nc{\OT}{constant $\theta$-}
\nc{\T}{$\theta$-}
\nc{\IT}{inverse $\theta$-}
\nc{\rad}{\mathrm{ad}}
\nc{\bt}{\tilde{T}}
\nc{\bi}{\tilde{id}}
\nc{\asi}{ASI\xspace}
\nc{\qadm}{$Q$-admissible\xspace}
\nc{\aybe}{AYBE\xspace}
\nc{\admset}{\{\pm x\}\cup (-x+K^{\times}) \cup K^{\times} x^{-1}}
\nc{\hr}{\hat{r}}
\nc{\dualrep}{gives a dual representation\xspace}
\nc{\admt}{admissible to\xspace}
\nc{\opa}{\cdot_A}
\nc{\opb}{\cdot_B}
\nc{\post}{positive type\xspace}
\nc{\negt}{negative type\xspace}
\nc{\invt}{inverse type\xspace}
\nc{\pll}{\beta}
\nc{\plc}{\epsilon}
\nc{\ass}{{\mathit{Ass}}}
\nc{\lie}{{\mathit{Lie}}}
\nc{\comm}{{\mathit{Comm}}}
\nc{\dend}{{\mathit{Dend}}}
\nc{\zinb}{{\mathit{Zinb}}}
\nc{\tdend}{{\mathit{TDend}}}
\nc{\prelie}{{\mathit{preLie}}}
\nc{\postlie}{{\mathit{PostLie}}}
\nc{\quado}{{\mathit{Quad}}}
\nc{\octo}{{\mathit{Octo}}}
\nc{\ldend}{{\mathit{ldend}}}
\nc{\lquad}{{\mathit{LQuad}}}
 \nc{\adec}{\check{;}} \nc{\aop}{\alpha}
\nc{\dftimes}{\widetilde{\otimes}} \nc{\dfl}{\succ} \nc{\dfr}{\prec}
\nc{\dfc}{\circ} \nc{\dfb}{\bullet} \nc{\dft}{\star}
\nc{\dfcf}{{\mathbf k}} \nc{\apr}{\ast} \nc{\spr}{\cdot}
\nc{\twopr}{\circ} \nc{\tspr}{\star} \nc{\sempr}{\ast}
\nc{\disp}[1]{\displaystyle{#1}}
\nc{\bin}[2]{ (_{\stackrel{\scs{#1}}{\scs{#2}}})}  
\nc{\binc}[2]{ \left (\!\! \begin{array}{c} \scs{#1}\\
    \scs{#2} \end{array}\!\! \right )}  
\nc{\bincc}[2]{  \left ( {\scs{#1} \atop
    \vspace{-.5cm}\scs{#2}} \right )}  
\nc{\sarray}[2]{\begin{array}{c}#1 \vspace{.1cm}\\ \hline
    \vspace{-.35cm} \\ #2 \end{array}}
\nc{\bs}{\bar{S}} \nc{\dcup}{\stackrel{\bullet}{\cup}}
\nc{\dbigcup}{\stackrel{\bullet}{\bigcup}} \nc{\etree}{\big |}
\nc{\la}{\longrightarrow} \nc{\fe}{\'{e}} \nc{\rar}{\rightarrow}
\nc{\dar}{\downarrow} \nc{\dap}[1]{\downarrow
\rlap{$\scriptstyle{#1}$}} \nc{\uap}[1]{\uparrow
\rlap{$\scriptstyle{#1}$}} \nc{\defeq}{\stackrel{\rm def}{=}}
\nc{\dis}[1]{\displaystyle{#1}} \nc{\dotcup}{\,
\displaystyle{\bigcup^\bullet}\ } \nc{\sdotcup}{\tiny{
\displaystyle{\bigcup^\bullet}\ }} \nc{\hcm}{\ \hat{,}\ }
\nc{\hcirc}{\hat{\circ}} \nc{\hts}{\hat{\shpr}}
\nc{\lts}{\stackrel{\leftarrow}{\shpr}}
\nc{\rts}{\stackrel{\rightarrow}{\shpr}} \nc{\lleft}{[}
\nc{\lright}{]} \nc{\uni}[1]{\tilde{#1}} \nc{\wor}[1]{\check{#1}}
\nc{\free}[1]{\bar{#1}} \nc{\den}[1]{\check{#1}} \nc{\lrpa}{\wr}
\nc{\curlyl}{\left \{ \begin{array}{c} {} \\ {} \end{array}
    \right .  \!\!\!\!\!\!\!}
\nc{\curlyr}{ \!\!\!\!\!\!\!
    \left . \begin{array}{c} {} \\ {} \end{array}
    \right \} }
\nc{\leaf}{\ell}       
\nc{\longmid}{\left | \begin{array}{c} {} \\ {} \end{array}
    \right . \!\!\!\!\!\!\!}
\nc{\ot}{\otimes} \nc{\sot}{{\scriptstyle{\ot}}}
\nc{\otm}{\overline{\ot}}
\nc{\ora}[1]{\stackrel{#1}{\rar}}
\nc{\ola}[1]{\stackrel{#1}{\la}}
\nc{\pltree}{\calt^\pl}
\nc{\epltree}{\calt^{\pl,\NC}}
\nc{\rbpltree}{\calt^r}
\nc{\scs}[1]{\scriptstyle{#1}} \nc{\mrm}[1]{{\rm #1}}
\nc{\dirlim}{\displaystyle{\lim_{\longrightarrow}}\,}
\nc{\invlim}{\displaystyle{\lim_{\longleftarrow}}\,}
\nc{\mvp}{\vspace{0.5cm}} \nc{\svp}{\vspace{2cm}}
\nc{\vp}{\vspace{8cm}} \nc{\proofbegin}{\noindent{\bf Proof: }}
\nc{\proofend}{$\blacksquare$ \vspace{0.5cm}}
\nc{\freerbpl}{{F^{\mathrm RBPL}}}
\nc{\sha}{{\mbox{\cyr X}}}  
\nc{\ncsha}{{\mbox{\cyr X}^{\mathrm NC}}} \nc{\ncshao}{{\mbox{\cyr
X}^{\mathrm NC,\,0}}}
\nc{\shpr}{\diamond}    
\nc{\shprm}{\overline{\diamond}}    
\nc{\shpro}{\diamond^0} 
\nc{\shprr}{\diamond^r}  
\nc{\shpra}{\overline{\diamond}^r}
\nc{\shpru}{\check{\diamond}} \nc{\catpr}{\diamond_l}
\nc{\rcatpr}{\diamond_r} \nc{\lapr}{\diamond_a}
\nc{\sqcupm}{\ot}
\nc{\lepr}{\diamond_e} \nc{\vep}{\varepsilon} \nc{\labs}{\mid\!}
\nc{\rabs}{\!\mid} \nc{\hsha}{\widehat{\sha}}
\nc{\lsha}{\stackrel{\leftarrow}{\sha}}
\nc{\rsha}{\stackrel{\rightarrow}{\sha}} \nc{\lc}{\lfloor}
\nc{\rc}{\rfloor}
\nc{\tpr}{\sqcup}
\nc{\nctpr}{\vee}
\nc{\plpr}{\star}
\nc{\rbplpr}{\bar{\plpr}}
\nc{\sqmon}[1]{\langle #1\rangle}
\nc{\forest}{\calf}
\nc{\altx}{\Lambda_X} \nc{\vecT}{\vec{T}} \nc{\onetree}{\bullet}
\nc{\Ao}{\check{A}}
\nc{\seta}{\underline{\Ao}}
\nc{\deltaa}{\overline{\delta}}
\nc{\trho}{\tilde{\rho}}
\nc{\rpr}{\circ}
\nc{\dpr}{{\tiny\diamond}}
\nc{\rprpm}{{\rpr}}
\nc{\mmbox}[1]{\mbox{\ #1\ }} \nc{\ann}{\mrm{ann}}
\nc{\Aut}{\mrm{Aut}} \nc{\can}{\mrm{can}}
\nc{\twoalg}{{two-sided algebra}\xspace}
\nc{\colim}{\mrm{colim}}
\nc{\Cont}{\mrm{Cont}} \nc{\rchar}{\mrm{char}}
\nc{\cok}{\mrm{coker}} \nc{\dtf}{{R-{\rm tf}}} \nc{\dtor}{{R-{\rm
tor}}}
\nc{\depth}{{\mrm d}}
\nc{\Div}{{\mrm Div}} \nc{\End}{\mrm{End}} \nc{\Ext}{\mrm{Ext}}
\nc{\Fil}{\mrm{Fil}} \nc{\Frob}{\mrm{Frob}} \nc{\Gal}{\mrm{Gal}}
\nc{\GL}{\mrm{GL}} \nc{\Hom}{\mrm{Hom}} \nc{\hsr}{\mrm{H}}
\nc{\hpol}{\mrm{HP}} \nc{\id}{\mrm{id}} \nc{\im}{\mrm{im}}
\nc{\incl}{\mrm{incl}} \nc{\length}{\mrm{length}}
\nc{\LR}{\mrm{LR}} \nc{\mchar}{\rm char} \nc{\NC}{\mrm{NC}}
\nc{\mpart}{\mrm{part}} \nc{\pl}{\mrm{PL}}
\nc{\ql}{{\QQ_\ell}} \nc{\qp}{{\QQ_p}}
\nc{\rank}{\mrm{rank}} \nc{\rba}{\rm{RBA }} \nc{\rbas}{\rm{RBAs }}
\nc{\rbpl}{\mrm{RBPL}}
\nc{\rdef}{\mrm{def}} \nc{\rdiv}{{\rm div}} \nc{\rtf}{{\rm tf}}
\nc{\rtor}{{\rm tor}} \nc{\res}{\mrm{res}} \nc{\SL}{\mrm{SL}}
\nc{\Spec}{\mrm{Spec}} \nc{\tor}{\mrm{tor}} \nc{\Tr}{\mrm{Tr}}
\nc{\mtr}{\mrm{sk}}
\nc{\rbw}{\rm{RBW }} \nc{\rbws}{\rm{RBWs }} \nc{\rcot}{\mrm{cot}}
\nc{\rest}{\rm{controlled}\xspace}
\nc{\ab}{\mathbf{Ab}} \nc{\Alg}{\mathbf{Alg}}
\nc{\Dend}{\mathbf{DD}} \nc{\bfk}{{\bf k}} \nc{\bfone}{{\bf 1}}
\nc{\rb}{\mathrm{RB}}
\nc{\BA}{{\mathbb A}} \nc{\CC}{{\mathbb C}} \nc{\DD}{{\mathbb D}}
\nc{\EE}{{\mathbb E}} \nc{\FF}{{\mathbb F}} \nc{\GG}{{\mathbb G}}
\nc{\HH}{{\mathbb H}} \nc{\LL}{{\mathbb L}} \nc{\NN}{{\mathbb N}}
\nc{\QQ}{{\mathbb Q}} \nc{\RR}{{\mathbb R}} \nc{\BS}{{\mathbb{S}}} \nc{\TT}{{\mathbb T}}
\nc{\VV}{{\mathbb V}} \nc{\ZZ}{{\mathbb Z}}
\nc{\calao}{{\mathcal A}} \nc{\cala}{{\mathcal A}}
\nc{\calc}{{\mathcal C}} \nc{\cald}{{\mathcal D}}
\nc{\cale}{{\mathcal E}} \nc{\calf}{{\mathcal F}}
\nc{\calfr}{{{\mathcal F}^{\,r}}} \nc{\calfo}{{\mathcal F}^0}
\nc{\calfro}{{\mathcal F}^{\,r,0}} \nc{\oF}{\overline{F}}
\nc{\calg}{{\mathcal G}} \nc{\calh}{{\mathcal H}}
\nc{\cali}{{\mathcal I}} \nc{\calj}{{\mathcal J}}
\nc{\call}{{\mathcal L}} \nc{\calm}{{\mathcal M}}
\nc{\caln}{{\mathcal N}} \nc{\calo}{{\mathcal O}}
\nc{\calp}{{\mathcal P}} \nc{\calq}{{\mathcal Q}} \nc{\calr}{{\mathcal R}}
\nc{\calt}{{\mathcal T}} \nc{\caltr}{{\mathcal T}^{\,r}}
\nc{\calu}{{\mathcal U}} \nc{\calv}{{\mathcal V}}
\nc{\calw}{{\mathcal W}} \nc{\calx}{{\mathcal X}}
\nc{\CA}{\mathcal{A}}
\nc{\fraka}{{\mathfrak a}} \nc{\frakB}{{\mathfrak B}}
\nc{\frakb}{{\mathfrak b}} \nc{\frakd}{{\mathfrak d}}
\nc{\oD}{\overline{D}}
\nc{\frakF}{{\mathfrak F}} \nc{\frakg}{{\mathfrak g}}
\nc{\frakm}{{\mathfrak m}} \nc{\frakM}{{\mathfrak M}}
\nc{\frakMo}{{\mathfrak M}^0} \nc{\frakp}{{\mathfrak p}}
\nc{\frakS}{{\mathfrak S}} \nc{\frakSo}{{\mathfrak S}^0}
\nc{\fraks}{{\mathfrak s}} \nc{\os}{\overline{\fraks}}
\nc{\frakT}{{\mathfrak T}}
\nc{\oT}{\overline{T}}
\nc{\frakX}{{\mathfrak X}} \nc{\frakXo}{{\mathfrak X}^0}
\nc{\frakx}{{\mathbf x}}
\nc{\frakTx}{\frakT}      
\nc{\frakTa}{\frakT^a}    
\nc{\frakTxo}{\frakTx^0}   
\nc{\caltao}{\calt^{a,0}}   
\nc{\ox}{\overline{\frakx}} \nc{\fraky}{{\mathfrak y}}
\nc{\frakz}{{\mathfrak z}} \nc{\oX}{\overline{X}}
\font\cyr=wncyr10
\nc{\al}{\alpha}
\nc{\lam}{\lambda}
\nc{\lr}{\longrightarrow}
\newcommand*\mycirc[1]{%
    \begin{tikzpicture}
        \node[draw,circle,inner sep=1pt] {#1};
\end{tikzpicture}}
\begin{document}

\title[Solving the Poisson Yang-Baxter equation via deformation-to-quasiclassical-limit]{Solving the Poisson Yang-Baxter equation via deformation-to-quasiclassical-limits}

\author{Siyuan Chen}
\address{Chern Institute of Mathematics and LPMC, Nankai University, Tianjin 300071, China}
\email{1120210010@mail.nankai.edu.cn}

\author{Chengming Bai}
\address{Chern Institute of Mathematics and LPMC, Nankai University, Tianjin 300071, China}
\email{baicm@nankai.edu.cn}

\author{Li Guo}
\address{Department of Mathematics and Computer Science, Rutgers University, Newark, NJ 07102, USA}
\email{liguo@rutgers.edu}

\begin{abstract}
A fundamental construction of Poisson algebras is to derive them as the quasiclassical limits (\qcls) of associative algebra deformations of commutative associative algebras.
This paper lifts this process to the level of classical Yang-Baxter type equations. Solutions of the Poisson
Yang-Baxter equation (PYBE) in Poisson algebras is obtained by
scalarly deforming solutions of the associative Yang-Baxter
equation (AYBE) in commutative associative algebras.
Inspired by the characterization of solutions of
various classical Yang-Baxter
type equations by $\calo$-operators, we introduce the notions of
deformations of (bi)module algebras and scalar deformations of the corresponding $\mathcal O$-operators, from which the \qcls give $\calo$-operators for Poisson algebras, which in turn provide solutions of the PYBE.
Furthermore, as the \qcls of tridendriform algebra deformations of commutative tridendriform algebras, post-Poisson algebras produce deformations-QCLs of $\calo$-operators for Poisson algebras, thus offering explicit solutions of the PYBE.
\end{abstract}

\subjclass[2020]{
    13D10, 
    16W60, 
    17B38, 
    17B63, 
    53D55,  
    13N15   
}

\keywords{deformation; quasiclassical limit;
    $\mathcal{O}$-operator; classical Yang-Baxter equation;
    post-Poisson algebra, dendriform algebra, tridendriform algebra, derivation}

\maketitle

\vspace{-1.5cm}

\tableofcontents

\vspace{-1.7cm}


\section{Introduction}
The purpose of this paper is to lift the construction of Poisson algebras as the quasiclassical limits (QCLs) of associative algebra deformations of commutative algebras to the level of the Poisson Yang-Baxter equation (PYBE), namely constructing solutions of the PYBE as the QCLs of deformed solutions of the associative Yang-Baxter equation (AYBE) from solutions in commutative algebras. To achieve this goal, we employ the following two strategies. 
    \begin{enumerate}
\item We adapt the $\calo$-operator approach of the classical Yang-Baxter equation (CYBE) and AYBE to the PYBE, deriving solutions of the PYBE in a \qcl Poisson algebra from \qcl $\calo$-operators for Poisson algebras (Theorem~\mref{pro-w1});
\item We further adapt the Zinbiel algebra (namely commutative dendriform algebras) construction of $\calo$-operators to pre-Poisson algebras, constructing \qcl $\calo$-operators for Poisson algebras from \qcl pre-Poisson algebras (Theorem~\mref{pro-diaid}).
    \end{enumerate}
As a consequence, deformations of Zinbiel algebras provide solutions of the PYBE (Corollary~\mref{cor:summary}). Moreover, our general approach also shows that deformations of commutative tridendriform algebras yield solutions of the PYBE (Theorem~\mref{pro-w2}).

\vspb
\subsection{Deformations of algebras and quasiclassical limits}
The deformation theory of associative algebras was initiated by
Gerstenhaber \mcite{G1}. For an associative algebra $(A,\cdot)$, its formal deformation is an associative product $\cdot_{h}$ on
$A[[h]]$ such that
\[\vspb x\cdot_{h}y=x\cdot y+\sum_{s=1}^{\infty}x\cdot_{s} y~h^s,\quad  x,y\in A,
\vspa
\]
where $x\cdot_{s}y\in A$. When $(A,\cdot)$ is commutative, then
with the bracket $[,]$ defined by
\vspb
$$[x,y]=x\cdot_1
y-y\cdot_{1}x,
\vspa
$$
the triple $(A,[,],\cdot)$ is called the {\bf quasiclassical limit
    (QCL)} of the deformation.~\footnote{Quasiclassical limit also bears the name classical limit or
    semiclassical limit. We follow the convention of \mcite{ES}.} The
triple $(A,[,],\cdot)$ is a Poisson algebra, henceforth called the {\bf \qcl Poisson algebra} of the deformation. The (topological)
algebra $(A[[h]],\cdot_{h})$ is called a quantization of the
Poisson algebra $(A,[,],\cdot)$, as shown in the diagram
\vspb
    \begin{equation}
        \xymatrix{
            *\txt{\small commutative algebra}\ar[rr]^{\txt{\tiny deformation}}
            &&*\txt{\small associative algebra}\ar@<2pt>[rr]^{\txt{ \tiny
                    \qcl}}
            &&*\txt{\small Poisson algebra} \ar@<2pt>[ll]^{\txt{\tiny quantization}}
        }
        \mlabel{eq:defqcl}
    \end{equation}
The quantization procedure
establishes a transition from Poisson bracket to commutator, which corresponds to the transition from classical mechanics to quantum mechanics. The problem of quantizing the Poisson algebra of smooth
functions on a Poisson manifold was resolved in the well-known
work of Kontsevich \mcite{Ko}.

The study for deformation and QCL has been extended to dendriform
algebras and tridendriform algebras, both of which are associative
algebras whose products split into several operations. Dendriform
algebras were introduced by Loday \cite{Lo} in his study of
algebraic $K$-theory and tridendriform algebras were introduced by
Loday and Ronco \mcite{LR} in their study of the Koszul duality
and polytopes. The QCL of the dendriform  algebra deformation of a
Zinbiel algebra, which is the commutative version of a dendriform
algebra, was shown by Aguiar \mcite{A2} to be a pre-Poisson
algebra. It is further shown by Ospel, Panaite and
Vanhaecke~\mcite{OPV} that the QCL corresponding to the
tridendriform algebra deformation of a commutative tridendriform
algebra is a post-Poisson algebra, a notion introduced in
\mcite{BBGN} from an operadic point of view and in \mcite{NB}
related to Poisson bialgebras.
See \cite{Do-PBW,LBS,LS} for the applications of pre-Poisson algebras and post-Poisson algebras.
\vspb

\subsection{Classical Yang-Baxter type equations and $\mathcal{O}$-operators}
Our purpose is to lift the deformation-to-\qcl process for Poisson
algebras in \meqref{eq:defqcl} to the level of the Poisson Yang-Baxter equation (PYBE). The PYBE is a combination of the
CYBE in Lie algebras and the
AYBE in associative algebras. As is well-known, the CYBE has
its importance as the classical limit of the quantum Yang-Baxter
equation and as an important notion in integral systems. As a
fundamental device to find solutions of the CYBE, the
$\calo$-operator on a Lie algebra was introduced by
Kupershmidt~\mcite{Ku} and was shown to characterize solutions of the CYBE. In general, all
$\calo$-operators give rise to solutions of the
CYBE~\mcite{Bai0}. Furthermore, pre-Lie algebras, in addition to
their independent interest, naturally produce $\calo$-operators
and hence solutions of the CYBE. This relations can be
depicted by the diagram
\begin{equation}
\xymatrix{ \txt{\small pre-Lie algebras} \ar[rr] && \txt{\small $\calo$-operators on Lie algebras} \ar@<2pt>[rr] && \txt{\small solutions of\\\small the CYBE}\ar@<2pt>[ll]  }
\mlabel{eq:opcybe}
\end{equation}
On the associative side, the AYBE was introduced by Aguiar \mcite{A1,A3} and by Polishchuk \mcite{P1} in the form with spectral
parameters. The AYBE is an analog of the well-known CYBE.
Numerous applications of the AYBE can be found in  bialgebra theory \mcite{A3}, mirror symmetry
\mcite{P1}, algebraic geometry \mcite{P2}, integrable systems\mcite{MS} and magnetization dynamics \mcite{AZ}.
The AYBE can also be studied along the line of the CYBE as in \meqref{eq:opcybe}. There $\calo$-operators are also called relative Rota-Baxter operators~\mcite{BGN4} and generalized Rota-Baxter operators~\mcite{U}, have been studied extensively recently~\mcite{BGN2,D}, in the contexts of Poisson structures, deformations and cohomology.

The PYBE arose from the study of Poisson bialgebras
\mcite{NB}. Since the Poisson algebra is the quasiclassical limit
of an associative deformation, provided with suitable solutions of
the AYBE in the commutative algebra and in the associative
deformation, we may expect that the solution of the PYBE in the
QCL Poisson algebra can be obtained. Similarly, the
$\mathcal{O}$-operator on a Poisson algebra has been applied to
study the PYBE \mcite{LBS, NB}.

\subsection{Solutions of the PYBE as \qcls of scalarly deformed solutions of the AYBE}\label{sss}
This study extends the deformation-\qcl construction of Poisson algebras as illustrated in \meqref{eq:defqcl} to the level of PYBE. Inspired by the operator approach to CYBE outlined in \meqref{eq:opcybe}, we are also led to the deformation-\qcl constructions of $\calo$-operators and of pre-Poison algebras and post-Poisson algebras.

Our overall approach is illustrated in the following diagram where the horizontal direction follows \meqref{eq:defqcl} and the vertical direction is inspired by \meqref{eq:opcybe}. A more precise summary can be found in the diagrams  (\ref{diagw1}) and (\ref{maindiag}).
\vspb
$$
\xymatrix{
\mycirc{3}  \txt{\small Zinbiel algebra\\ (pre-commutative algebra)}
    \ar[rr]^{\txt{\tiny deformation}}\ar[d]_{\txt{\tiny pre-structure}}^{\txt{\tiny realization}}
    && *\txt{\small dendriform algebra\\ (pre-associative algebra)}\ar[rr]^{\txt{ \tiny
            \qcl}}\ar[d]_{\txt{\tiny pre-structure}}^{\txt{\tiny realization}}
    && *\txt{\small pre-Poisson algebra}\ar[d]_{\txt{\tiny pre-structure}}^{\txt{\tiny realization}}\\
\hspace{-.5cm}\mycirc{2} \txt{\small $\mathcal{O}$-operator\\ on commutative algebra}\ar[rr]^{\txt{\tiny deformation}}\ar@<2pt>[d]_{\txt{\tiny operator}}^{\txt{\tiny approach}}
    &&*\txt{\small $\mathcal{O}$-operator\\ on associative algebra}\ar[rr]^{\txt{ \tiny
            \qcl}}\ar@<2pt>[d]_{\txt{\tiny operator}}^{\txt{\tiny approach}}
    &&*\txt{\small $\mathcal{O}$-operator\\ on Poisson algebra}\ar@<2pt>[d]_{\txt{\tiny operator}}^{\txt{\tiny approach}}\\
\hspace{-1cm}\mycirc{1}  \txt{\small solution of AYBE in \\ commutative algebra}\ar@<2pt>[u]\ar[rr]^{\txt{\tiny deformation}}
    &&*\txt{\small solution  of AYBE in\\ associative algebra}\ar@<2pt>[u]\ar[rr]^{\txt{ \tiny
            \qcl}}
    &&*\txt{\small solution of PYBE in\\ Poisson algebra}\ar@<2pt>[u]
}
$$

We next follow this diagram to summarize our approach on solving the PYBE in three steps.

\subsubsection{Step 1: Solving PYBE by deforming solutions of AYBE in commutative algebras}
Here we take the direct approach of treating the solutions of the equations in tensor forms. Started from a solution of the AYBE in a commutative associative algebra with invariant
symmetric part, if its scalar deformation is a solution of the AYBE in the deformed associative algebra with invariant
symmetric part, then we obtain a solution of the PYBE in the QCL Poisson algebra (Theorem~\ref{inv-aybe}). But it is challenging to determining whether the scalar deformation of a solution of the AYBE is a solution of the AYBE in a given associative deformation.

\subsubsection{Step 2: Solving PYBE by deforming $\calo$-operators for commutative algebras}
Motivated by the $\calo$-operator characterizations of solutions of the AYBE and PYBE~\mcite{BGN2,NB},
we find characterization of the scalarly deformed solutions of the AYBE in the deformed associative algebras by scalar deformations of $\calo$-operators on commutative algebras.
For the latter $\calo$-operators, we deform an associative
algebra and its bimodule algebra, and then take a scalar deformation of the associated $\mathcal{O}$-operator which is still an $\mathcal{O}$-operator for the deformed associative algebras. In this case, the original $\mathcal{O}$-operator is also an $\mathcal{O}$-operator associated to the \qcl module Poisson algebra (Theorem \mref{invrb}). Moreover, the deformation-QCL process for solutions of classical Yang-Baxter type equations is equivalently presented as the deformation-QCL process for $\calo$-operators (Theorem~\ref{pro-w1}).

On the other hand, $\calo$-operators for associative algebras and Poisson algebras give rise to skew-symmetric solutions of the AYBE and PYBE, respectively~\mcite{BGN2,LBS}.
We further show that scalar deformations of $\calo$-operators give rise to skew-symmetric solutions of the PYBE (Theorem~\ref{pro-skews}). Even though, it is still difficult to determine whether the scalar
deformation of an $\calo$-operator is an $\calo$-operator
associated to a given bimodule algebra deformation, the operator approach has the advantage that the $\calo$-operators can be realized by the pre- or post-structures, leading us to the next and final step.

\subsubsection{Step 3: Solving PYBE by deforming commutative pre-structures and post-structures}
According to~\mcite{BGN4}, a tridendriform algebra is a bimodule algebra for which the identity map is an $\calo$-operator. We show that the scalar deformation
of this identity $\calo$-operator is again an $\calo$-operator associated to the corresponding bimodule algebra deformation
(Theorem~\ref{pro-diaid}), thus providing instances of scalarly deformed $\calo$-operators. Then combining with Step 2, we obtain solutions of the PYBE in the \qcls Poisson algebras (Corollary~\mref{cor:summary} and Theorem~\ref{pro-w2}).

\subsection{Organization of the paper}

With the main ideas outlined above, the paper is structured as follows, beginning with the more fundamental concepts that are of interest in their own right.

In Section \mref{S3}, we give the notion of a deformation of a
bimodule algebra and determine the corresponding QCL. Then we show
that a bimodule algebra deformation with a scalar deformation of
an $\mathcal{O}$-operator provides an $\mathcal{O}$-operator
associated to the QCL.

In Section \mref{S2}, we apply bimodule algebra deformations with scalar deformations of $\mathcal{O}$-operators to tridendriform deformations of commutative tridendriform algebras.
We give an explicit deformation formula via derivations.

In Section \mref{s:ybe}, we apply bimodule algebra deformations with scalar deformations of $\mathcal{O}$-operators
to establish the links between certain solutions of the AYBE and
the PYBE, which are illustrated by commutative diagrams.
\smallskip

\noindent
{\bf Notations.} Throughout this paper, we fix a ground field $\bfk$ of characterization $0$ for vector spaces, algebras and tensor products. We also fix the power series ring $\mathbf{K}=\bfk[[h]]$ as the ground algebra for topological modules, topological algebras and topological tensor products. For a vector space $A$ with an operation $\circ$, the linear maps $L_\circ, R_\circ: A\rightarrow {\rm End}_{\bf k}(A)$ are respectively defined by
\vspa
$$L_\circ(x)y:=x\circ y,\;\;R_\circ(x)y:=y\circ x,\;\; x,y\in A.
\vspa
$$
Let $\mathbb{N}$ and $\mathbb{Z}^{+}$ denote the set of natural numbers and the set of positive integers respectively. 
\vspb

\section{Bimodule algebra deformations and scalar deformations of $\mathcal{O}$-operators}\mlabel{S3}
This section introduces the notions
of a bimodule algebra deformation and the corresponding
quasiclassical limit (QCL). The scalar deformation of an
$\calo$-operator on a commutative associative algebra is also
introduced to give an $\calo$-operator on the Poisson algebra as
the QCL.

\subsection{Deformations of bimodule algebras and the corresponding quasiclassical limits}
\mlabel{ss:deformgen}
We introduce a general notion of bimodule algebras in order to provide a uniform framework for several structures considered in this paper.

Let $\calp$ be a binary quadratic operad~\mcite{LV}. For convenience, we also let $\calp$ denote the
category of $\calp$-algebras.

\begin{defi}
Let $\calp$ be a binary quadratic operad with binary operations $\{\mu_i\}_{i\in I}$. Let $(A,\{\circ_i\}_{i\in I})$ and $(V,\{\cdot_i\}_{i\in I})$ be $\calp$-algebras. Let $l_i,r_i: A\rightarrow \textrm{End}_{\bfk}(V), i\in I$ be linear maps. If the vector space direct sum $A\oplus V$ is again a $\calp$-algebra with the multiplications $\odot_i$ defined by
\begin{equation}\mlabel{eq:pdimod}
    (x,u)\odot_i(y,v)=(x\circ_i y,l_i(x)v+r_i(y)u+u\cdot_i v), \quad  x, y\in A, u, v\in V, i\in I,
\end{equation}
then the quadruple $(V,\{\cdot\}_i,\{l_i\}_i,\{r_i\}_i)$ is called an $A$-\textbf{bimodule $\calp$-algebra} and the $\calp$-algebra $(A\oplus V,\{\odot_i\})$ is denoted by $A\ltimes_{l_i,r_i} V$. If the operations $\mu_i$ are either symmetric (meaning commutative) or antisymmetric, implying $r_i=\pm l_i, i\in I$,
then the triple $(V,\{\cdot\}_i,\{l_i\}_i)$ is called an $A$-\textbf{module $\calp$-algebra}. The prefixes $A$- or $\calp$- will be suppressed if the context is clear.

Starting with a vector space $V$, equip $V$ with a $\calp$-algebra structure by taking $\cdot_i$ to be the zero multiplication. Then the resulting $A$-bimodule
algebra $(V,\{\cdot_i\},\{l_i\},\{r_i\})$ is called an
$A$-\textbf{bimodule} and is simply denoted by
$(V,\{l_i\},\{r_i\})$. In the case when each $\circ_i$ is either
symmetric or antisymmetric, then $V$ is also called an
$A$-\textbf{module}. The resulting $\calp$-algebra $(A\oplus
V,\{\odot\}_i)$ is still denoted by $A\ltimes_{l_i,r_i} V$.
\mlabel{de:pbimod}
\end{defi}

\vspd

When $\calp$ is the operad of associative algebras, we recover the notion of a bimodule algebra in~\mcite{BGN2}, originally formulated by relations among the binary operations and maps as follows. See also~\mcite{Me,Lue2} for a similar construction. 

\begin{pro}{\rm{\mcite{BGN2}}}\mlabel{abimodag-eqv}
Let $(A,\circ)$ and $(V,\cdot)$ be associative algebras. Let $l,r: A\rightarrow \End_{\bfk}(V)$ be linear maps. Then $(V,\cdot,l,r)$ is an $A$-bimodule algebra if and only if the following conditions are satisfied.
\vspb
\begin{align}
 l(x\circ y) =& l(x)l(y), &l(x)r(y) &= r(y)l(x), &r(x\circ y)&= r(y)r(x), \mlabel{abimod}\\
 l(x)(u\cdot v) =& (l(x)u)\cdot v, &(r(x)u)\cdot v &= u\cdot(l(x)v), &r(x)(u\cdot v) &= u\cdot(r(x)v),\  x, y\in A, u, v\in V. \mlabel{abimoda}
\end{align}
\end{pro}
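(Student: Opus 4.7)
The plan is to verify this equivalence by a direct bookkeeping of the associativity condition for the multiplication $\odot$ on $A \oplus V$ defined in~\meqref{eq:pdimod}, since in the associative operad this is the sole axiom to check (the operations $\mu_i$ collapse to a single product). By Definition~\mref{de:pbimod}, $(V,\cdot,l,r)$ is an $A$-bimodule algebra exactly when $(A\oplus V,\odot)$ is associative, so I will expand $((x,u)\odot(y,v))\odot(z,w)$ and $(x,u)\odot((y,v)\odot(z,w))$ and compare components.

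First I would record that projecting the associativity identity onto the $A$-component simply recovers the associativity of $\circ$ on $A$, which is assumed, and contributes nothing new. The content lies in the $V$-component, which expands into seven terms on each side:
\begin{align*}
\mathrm{LHS}_V &= l(x\circ y)w + r(z)l(x)v + r(z)r(y)u + r(z)(u\cdot v) + (l(x)v)\cdot w + (r(y)u)\cdot w + (u\cdot v)\cdot w,\\
\mathrm{RHS}_V &= l(x)l(y)w + l(x)r(z)v + l(x)(v\cdot w) + r(y\circ z)u + u\cdot(l(y)w) + u\cdot(r(z)v) + u\cdot(v\cdot w).
\end{align*}

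The key step is to isolate each of the six identities in \meqref{abimod}-\meqref{abimoda} by suitable specialization. Taking only one of $u,v,w$ nonzero yields the three relations in \meqref{abimod}: the case $u=v=0$ gives $l(x\circ y)=l(x)l(y)$, the case $v=w=0$ gives $r(x\circ y)=r(y)r(x)$, and the case $u=w=0$ gives $l(x)r(y)=r(y)l(x)$. Taking pairs of $u,v,w$ nonzero while setting the appropriate $A$-elements to zero (to kill the terms involving $l$ or $r$ of them) yields the three identities in \meqref{abimoda}: setting $x=y=0$ with $u,v\neq 0$ gives $r(z)(u\cdot v)=u\cdot(r(z)v)$; setting $y=z=0$ with $v,w\neq 0$ gives $l(x)(v\cdot w)=(l(x)v)\cdot w$; and setting $x=z=0$ with $u,w\neq 0$ gives $(r(y)u)\cdot w=u\cdot(l(y)w)$. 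Finally, taking $x=y=z=0$ with $u,v,w$ nonzero recovers the associativity of $\cdot$ on $V$, already assumed.

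For the converse direction, assuming all six identities hold, substituting them termwise into $\mathrm{LHS}_V$ transforms it into $\mathrm{RHS}_V$, establishing associativity of $\odot$ on $A\oplus V$. No real obstacle is anticipated here: the proof is a routine indexed calculation, and the only subtlety is ensuring that the seven terms on each side are matched with the correct six identities plus the ambient associativity of $(V,\cdot)$. This also explains the appearance of precisely three ``module over algebra'' relations in \meqref{abimod} and three ``compatibility'' relations in \meqref{abimoda}, one for each bilinear monomial in $(u,v,w)$ of degree two.
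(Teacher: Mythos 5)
Your proof is correct and is essentially the standard argument for this result, which the paper itself only quotes from [BGN2]: expanding the associativity of $\odot$ on $A\oplus V$ componentwise and isolating the seven $V$-component terms by multilinearity yields exactly the six identities \meqref{abimod}--\meqref{abimoda} together with the assumed associativity of $\circ$ and $\cdot$, and the converse is the same matching read backwards. No gaps.
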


Let $A$ be an associative algebra and let $V$ be a vector space
equipped with a zero operation $\cdot$. Then the resulting
$A$-bimodule algebra $(V,\cdot, l,r)$ coincides with the usual
$A$-\textbf{bimodule} defined by Eq.~\meqref{abimod}.

Now apply the general formulation to the operad $\calp$ of
commutative associative algebras. As noted above,
$l(x)=r(x):=\rho_\circ(x)$ for $x\in A$. Then (\mref{abimod}) and
(\mref{abimoda}) reduce to
\vspb

\begin{equation}\mlabel{cba}
  \rho_{\circ}(x\circ y) =\rho_{\circ}(x)\rho_{\circ}(y)\quad\textrm{and}\quad\rho_{\circ}(x)(u\cdot v) =(\rho_{\circ}(x)u)\cdot
  v ,\;\; x,y\in A, u,v\in V,
\end{equation}
respectively. So in this case an
{\bf $A$-module algebra} is simply the usual commutative $A$-algebra. For notational consistency in the paper, we will use the former term as in~\mcite{NB}.

For the operad of Lie algebras, the notion of module
$\calp$-algebras will be detailed in Section\,\mref{ss:ybeopdend}
(see the paragraph below Theorem~\ref{tri-taybe}).
At this moment, for the sake of later references, we fix some notations when $\calp$ is the
operad of Poisson algebras. Recall that a \textbf{Poisson algebra}
$(P,\{,\},,\circ)$ consists of a Lie algebra $(P,\{,\})$ and a
commutative associative algebra $(P,\circ)$ such that
\vspb
\begin{equation}\mlabel{PostP1}
    \{x,y\circ z\}=\{x,y\}\circ z+y\circ\{x,z\},\;\; x,y,z\in P.
\end{equation}
\begin{defi}\rm{\mcite{NB}}
Let $(P,\{,\},\circ)$ and $(Q,[,],\cdot)$ be Poisson algebras, and let $\rho_{\{,\}},\rho_{\circ}: P\rightarrow \mathrm{End}_{\bfk}(Q)$ be linear maps. If the
direct sum of vector spaces $P\oplus Q$ is turned into a Poisson algebra with the bracket $\llbracket,\rrbracket$  and the multiplication $\odot$ respectively defined by
\vspb
\begin{eqnarray}
\llbracket(x,u),(y,v)\rrbracket&:=&(\{x, y\},\rho_{\{,\}}(x)v-\rho_{\{,\}}(y)u+[u,
  v]),\mlabel{lrbrp}\\
(x,u)\odot(y,v)&:=&(x\circ y,\rho_{\circ}(x)v+\rho_{\circ}(y)u+u\cdot v),
\quad  x, y\in P, u, v\in Q,\mlabel{odotp}
\end{eqnarray}
then we call $(Q,[,],\cdot,\rho_{\{,\}},\rho_{\circ})$ a
$P$-\textbf{module Poisson algebra} and denote the resulting
Poisson algebra $(P\oplus Q,\llbracket,\rrbracket,\odot)$ by
$P\ltimes_{\rho_{\{,\}},\rho_{\circ}} Q$.

The same notion of $P$-\textbf{module} is used if $Q$ is only a vector space but equipped with the zero multiplications as in the general construction in Definition~\mref{de:pbimod}.
\mlabel{Pma}
\end{defi}
\vspb
\begin{rmk}
There is also an equivalent characterization of a $P$-module Poisson algebra as well as a $P$-module in \mcite{NB} in terms of a system of equations similar to \eqref{abimod} and \eqref{abimoda}. We will not list them since they are not needed in the rest of this paper.
\end{rmk}

We next recall the needed notations for (formal) deformations as follows \cite{ES}. Let
$$\mathbf{K}:=\bfk[[h]]$$
be the formal power series algebra and let $V$ be a vector space. Consider the set of formal series
\vspb
\[V_h:=V[[h]]:= \Big\{\sum_{i=0}^{\infty}v_{i}~h^{i}~\Big|~v_{i}\in V\Big\}.
\vspb
\]
For $v^{\prime}=\sum\limits_{i=0}^{\infty}v_{i}~h^{i},w^{\prime}=\sum\limits_{i=0}^{\infty}w_{i}~h^{i}\in V[[h]]$ and
$\lambda^{\prime}=\sum\limits_{i=0}^{\infty}\lambda_{i}~h^{i}\in\mathbf{K}$, set
\vspc
\[v^{\prime}+w^{\prime}:=\sum_{i=0}^{\infty}(v_{i}+w_{i})h^{i} , \quad \lambda^{\prime} v^{\prime}:=\sum_{i=0}^{\infty}\Big(\sum_{m+n=i}\lambda_{m}~v_{n}\Big)~h^{i}.
\vspb
\]
Then $V[[h]]$ becomes a $\mathbf{K}$-module.

For a fixed real number $C>1$, define the \textbf{$h$-adic norm} $\|~\|$ on $V[[h]]$ by
\vspb
\[\Big\|\sum_{i=0}^{\infty}v_{i}~h^{i}\Big\|:= C^{-m},\]
where $m$ is the smallest integer such that $v_{m}\neq 0$. The topology induced by this norm is called the \textbf{$h$-adic topology}.
A $\mathbf{K}$-module that is $\bf K$-linearly isomorphic to $V[[h]]$ for a vector space $V$ is called a \textbf{topologically free $\mathbf{K}$-module}. In particular, $V_h$ is a topologically  free $\bfK$-module.

 Let $A_{h}$ be a topologically free $\mathbf{K}$-module. If there is a $\mathbf{K}$-bilinear operation $\cdot_{h}$ on $A_{h}$ such that
 \vspb
  \[(x\cdot_{h} y)\cdot_{h} z=x\cdot_{h}(y\cdot_{h} z),\;\; x,y,z\in A_h,\]
  then $(A_{h},\cdot_{h})$ is called a \textbf{topologically free associative algebra}.
\begin{defi}
  A \textbf{deformation of an associative algebra} $(A,\cdot)$ is a topologically free associative algebra $(A_{h},\cdot_{h})$ such that
    \begin{equation}
       x\cdot_{h}y \equiv x\cdot y\pmod h,\;\;x,y\in A.\mlabel{defa2}
    \end{equation}
\end{defi}
\begin{rmk}
  \begin{enumerate}
 \item In \mcite{ES},
the condition (\mref{defa2}) is replaced by
the equivalent condition
 $A=A_{h}/hA_{h}$.
 \item
To describe the associativity for a topologically free associative algebra, we need the notion of a \textbf{topological tensor product}.
Let $V[[h]]$ and $W[[h]]$ be topologically free $\mathbf{K}$-modules. The topological tensor product $V[[h]]\hat{\otimes}W[[h]]$ is the completion of
$V[[h]]\otimes_{\mathbf{K}}W[[h]]$ under the $h$-adic norm. So the topological tensor product is the completion of the $\mathbf{K}$-bilinear extension of the usual (algebraic) tensor product.
\end{enumerate}
\end{rmk}

A deformation $(A_{h},\cdot_{h})$ of an associative algebra
$(A,\cdot)$ is also called an \textbf{associative algebra
deformation} or simply an \textbf{associative deformation} of
$(A,\cdot)$.

\begin{defi}
Let $(A_{h},\cdot_{h})$ be an associative deformation of a commutative associative algebra $(A,\cdot)$. Let $[,]$ be an operation on $A$ such that
\[[x,y]\equiv\frac{x\cdot_{h}y-y\cdot_{h}x}{h}\mmod{h},\;\;  x, y\in A.\]
Then $(A,[,],\cdot)$ is called the
\textbf{quasiclassical limit (\qcl) of the associative
deformation} $(A_{h},\cdot_{h})$.
\end{defi}

The following result is well known.

\begin{thm}{\rm\mcite{ES}}
Let $(A_{h},\cdot_{h})$ be an associative deformation of a commutative associative algebra $(A,\cdot)$. Then its \qcl $(A,[,],\cdot)$ is a Poisson algebra.
\end{thm}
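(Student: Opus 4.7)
The plan is to reduce the theorem to the well-known fact that in any associative algebra the commutator $[a,b]:=ab-ba$ is a Lie bracket which acts by derivations of the product. Since $(A,\cdot)$ is commutative, for any $x,y\in A\subseteq A_h$ one has $x\cdot_h y - y\cdot_h x\equiv 0\pmod{h}$, so the commutator $[x,y]_h:=x\cdot_h y-y\cdot_h x$ lies in $hA_h$. Because $A_h=A[[h]]$ is topologically free, multiplication by $h$ is injective, so there is a unique $\bfK$-bilinear bracket $\{,\}_h$ on $A_h$ with $[x,y]_h=h\,\{x,y\}_h$, and its reduction modulo $h$ restricted to $x,y\in A$ is exactly the \qcl bracket $[x,y]$.

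Next I would verify the three axioms of a Poisson algebra. Antisymmetry of $[\,,\,]$ is immediate from that of $[\,,\,]_h$. For the Jacobi identity I would start from the Jacobi identity for $[\,,\,]_h$ on $A_h$, which is automatic from associativity of $\cdot_h$: for all $x,y,z\in A$,
\begin{equation*}
[x,[y,z]_h]_h+[y,[z,x]_h]_h+[z,[x,y]_h]_h=0.
\end{equation*}
Substituting $[\,,\,]_h=h\{,\}_h$ pulls out a factor of $h^2$; cancelling it and reducing modulo $h$ yields the Jacobi identity for $[\,,\,]$ on $A$. For the Leibniz rule I would use the derivation identity, again automatic in any associative algebra,
\begin{equation*}
[x,y\cdot_h z]_h=[x,y]_h\cdot_h z+y\cdot_h[x,z]_h,\quad x,y,z\in A,
\end{equation*}
pull out the common factor of $h$ on both sides, note that $y\cdot_h z\equiv y\cdot z\pmod h$, and reduce modulo $h$ to obtain $[x,y\cdot z]=[x,y]\cdot z+y\cdot[x,z]$ in $A$. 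Commutativity and associativity of $\cdot$ on $A$ are inherited from the order-$h^0$ part of $\cdot_h$ by the defining condition of the deformation.

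The only real subtlety, and hence the main (minor) obstacle, is the bookkeeping that reduction modulo $h$ commutes in the right way with the $\bfK$-bilinear operations $\{,\}_h$ and $\cdot_h$, and that the factor of $h$ can be cancelled at each stage. No operadic input or deeper deformation-theoretic result is needed: the whole argument is a short unwinding of associativity of $\cdot_h$ together with the commutativity hypothesis on $\cdot$.
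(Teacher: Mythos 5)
Your proof is correct and is exactly the standard argument: the paper itself gives no proof of this theorem, simply citing \mcite{ES}, and the argument there is the same reduction to the fact that the commutator of an associative product is a Lie bracket acting by derivations, divided by $h$ (which is legitimate since $h$-multiplication is injective on the topologically free module $A[[h]]$) and then reduced modulo $h$. No gaps; the bookkeeping you flag (that reduction mod $h$ is compatible with the $\bfK$-bilinear operations and with cancelling the factors of $h$ and $h^2$) is routine and works as you describe.
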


We next extend the associative deformations to bimodule algebras.
\begin{defi}
Let $(A_{h},\circ_{h})$ and $(V_{h},\cdot_{h})$ be
topologically free associative algebras. Let $l_{h},r_{h}:
A_{h}\rightarrow \mathrm{End}_{\mathbf{K}}(V_{h})$ be
$\mathbf{K}$-linear maps.
If $(V_{h},\cdot_{h},l_{h},r_{h})$ is an $A_{h}$-bimodule algebra, then we call $(V_{h},\cdot_{h},l_{h},r_{h})$ a \textbf{topologically free} $A_{h}$-\textbf{bimodule algebra} and denote the topologically free associative algebra structure on $A_{h}\oplus V_{h}$ by $A_{h}\ltimes_{l_{h},r_{h}} V_{h}$.

Let $(A,\circ)$ be an associative algebra and $(V,\cdot,l,r)$ be
an $A$-bimodule algebra. Let $(A_{h},\circ_{h})$ be a
topologically free associative algebra and
$(V_{h},\cdot_{h},l_{h},r_{h})$ be a topologically free
$A_{h}$-bimodule algebra. If $A_{h}\ltimes_{l_{h},r_{h}} V_{h}$
is an
associative deformation of
$A\ltimes_{l,r} V$, then we call
$(V_{h},\cdot_{h},l_{h},r_{h})$ an $A_{h}$-\textbf{bimodule
algebra deformation of the} $A$-\textbf{bimodule algebra }
$(V,\cdot,l,r)$.
\end{defi}

\begin{rmk}\label{ids}
    Let $\imath:A_{h}\oplus V_{h}\to(A\oplus V)_{h}$ be the map defined by
    \[\imath\,\Big(\sum_{i=0}^{\infty}x_{i}~h^{i},\sum_{i=0}^{\infty}v_{i}~h^{i}\Big):=\sum_{i=0}^{\infty}(x_{i},v_{i})~h^{i},\ \ i\in\NN,x_{i}\in A, v_{i}\in V.\]
    Then $\imath$ is a $\bf K$-linear isomorphism.
With $A_{h}\oplus V_{h}$ identified with $(A\oplus V)_{h}$ this way, it makes sense to say that $A_{h}\ltimes_{l_{h},r_{h}} V_{h}$ is an associative deformation of $A\ltimes_{l,r} V$.
\end{rmk}
\begin{pro}\label{edm}
    Let $(A,\circ)$ be an associative algebra and $(V,\cdot,l,r)$ be
    an $A$-bimodule algebra. Let $(A_{h},\circ_{h})$ be a
    topologically free associative algebra and
    $(V_{h},\cdot_{h},l_{h},r_{h})$ be a topologically free
    $A_{h}$-bimodule algebra. Then
    $(V_{h},\cdot_{h},l_{h},r_{h})$ is an $A_{h}$-bimodule
        algebra deformation of the $A$-bimodule algebra
    $(V,\cdot,l,r)$ if and only if the following congruences
    hold$:$
    \begin{eqnarray}
    &&x\circ_h y\equiv x\circ y\mmod{h},\quad    u\cdot_h v\equiv u\cdot v\mmod{h},\label{e1}\\
    &&l_h(x)v\equiv l(x)v\mmod{h},\quad r_{h}(y)u\equiv r(y)u\mmod{h}, \quad x, y\in A, u, v\in V.
    \label{e2}
    \end{eqnarray}
\end{pro}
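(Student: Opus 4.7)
The plan is to unfold the definition of bimodule algebra deformation and reduce it to a componentwise check. By definition, $(V_h,\cdot_h,l_h,r_h)$ is an $A_h$-bimodule algebra deformation of $(V,\cdot,l,r)$ if and only if $A_h\ltimes_{l_h,r_h} V_h$ is an associative deformation of $A\ltimes_{l,r} V$. Using the $\bfK$-linear isomorphism $\imath$ of Remark~\ref{ids} to identify $A_h\oplus V_h$ with $(A\oplus V)_h$, this means precisely that
\[
(x,u)\odot_h(y,v)\equiv (x,u)\odot(y,v)\mmod{h},\quad x,y\in A,\ u,v\in V,
\]
where $\odot$ and $\odot_h$ are given by the semidirect-product formula \eqref{eq:pdimod} applied to $(A,\circ,V,\cdot,l,r)$ and $(A_h,\circ_h,V_h,\cdot_h,l_h,r_h)$ respectively.

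For the \textbf{forward direction}, I would assume the above congruence and specialize its two inputs to ``pure'' elements in order to isolate each of the four operations. Taking inputs of the form $(x,0)$ and $(y,0)$ yields $(x\circ_h y,0)\equiv(x\circ y,0)\pmod{h}$, hence $x\circ_h y\equiv x\circ y\pmod{h}$. Taking $(x,0)$ and $(0,v)$ yields $(0,l_h(x)v)\equiv(0,l(x)v)\pmod{h}$, hence $l_h(x)v\equiv l(x)v\pmod{h}$; and symmetrically, inputs $(0,u)$ and $(y,0)$ give $r_h(y)u\equiv r(y)u\pmod{h}$. Finally, inputs $(0,u)$ and $(0,v)$ give $u\cdot_h v\equiv u\cdot v\pmod{h}$. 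This recovers \eqref{e1} and \eqref{e2}.

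For the \textbf{reverse direction}, I would assume the four congruences in \eqref{e1}, \eqref{e2} and simply add them componentwise:
\[
(x,u)\odot_h(y,v)=\bigl(x\circ_h y,\ l_h(x)v+r_h(y)u+u\cdot_h v\bigr)\equiv\bigl(x\circ y,\ l(x)v+r(y)u+u\cdot v\bigr)=(x,u)\odot(y,v)\mmod{h},
\]
which, together with the identification from Remark~\ref{ids}, is precisely the deformation condition.

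There is no real obstacle here; the only subtlety is to be careful about the identification $A_h\oplus V_h\cong (A\oplus V)_h$ so that the associative deformation axiom applied to the semidirect product can be read off component-by-component through formula \eqref{eq:pdimod}. Once this identification is in place, the equivalence is obtained by the bilinearity of $\odot_h$ and the ability to isolate each of the four structure maps by choosing appropriate inputs in $A\oplus V$.
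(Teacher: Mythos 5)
Your proposal is correct and follows essentially the same route as the paper's proof: reduce the deformation condition to the single congruence $(x,u)\odot_h(y,v)\equiv(x,u)\odot(y,v)\pmod{h}$ via the identification of Remark~\ref{ids}, then extract \eqref{e1}--\eqref{e2} by specializing to pure elements (your choices $(x,0)$, $(0,v)$, etc.\ are exactly the ``suitable choice of $x$, $y$, $u$ and $v$'' the paper invokes) and recover the combined congruence by componentwise addition for the converse.
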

\begin{proof}
Let $\odot_h$ denote the associative product
    of the topologically free associative algebra $A_{h}\ltimes_{l_{h},r_{h}} V_{h}$. Then $A_{h}\ltimes_{l_{h},r_{h}} V_{h}$
    is an associative deformation of the associative algebra $A\ltimes_{l,r} V$
    if and only if
    \begin{equation*}
         (x,u)\odot_{h}(y,v)\equiv (x,u)\odot(y,v)\mmod{h}, \quad x,y\in A, u,v\in V .
    \end{equation*}
This holds if and only if
    \begin{equation}\label{em}
          (x\circ_h y,l_h (x)v+r_h (y)u+u\cdot_h v)\equiv (x\circ y,l(x)v+r(y)u+u\cdot v)\mmod{h}, \quad x,y\in A, u,v \in V.
    \end{equation}
By suitably choosing $x$, $y$, $u$ and $v$, we find that \eqref{em} holds if and only if
    (\ref{e1}) and (\ref{e2}) hold, yielding the conclusion. \end{proof}

\begin{rmk}
Condition (\ref{e1}) holds if and only if
    $(A_{h},\circ_{h})$ is an
    associative deformation of $(A,\circ)$ and  $(V_{h},\cdot_{h})$ is an
    associative deformation of $(V,\cdot)$.
\end{rmk}

\begin{defi}
  Let $(A,\circ)$ be a commutative associative algebra and $(V,\cdot,\rho_{\circ})$ be an $A$-module algebra.
Let  $(A_{h},\circ_{h})$ and $(V_{h},\cdot_{h})$ be the
associative deformations of $(A,\circ)$ and $(V,\cdot)$
respectively, and $(A,\{,\},\circ)$ and $(V,[,],\cdot)$ be the
corresponding \qcls. Let $(V_{h},\cdot_{h},l_{h},r_{h})$ be an $A_{h}$-bimodule
algebra deformation of the $A$-module algebra
$(V,\cdot,\rho_{\circ})$. Let $\rho_{\{,\}}: A\rightarrow
\mathrm{End}_{\bfk}(V)$ be a linear map satisfying
  \begin{equation}\mlabel{cmod}
    \rho_{\{,\}}(x)(u)\equiv\frac{l_{h}(x)u-r_{h}(x)u}{h}\mmod{h}, \;\; x\in A, u\in
    V.
  \end{equation}
We call $(V,[,],\cdot,\rho_{\{,\}},\rho_{\circ})$ the
\textbf{quasiclassical limit (\qcl) of the} $A_{h}$-\textbf{bimodule
algebra deformation} $(V_{h},\cdot_{h},l_{h},r_{h})$.
\vspb
\end{defi}
\begin{rmk}\label{qclbd} \begin{enumerate}
\item\label{wdf} The fraction $\frac{l_{h}(x)u-r_{h}(x)u}{h}$ is a
well-defined element in $V_{h}$ for each $x\in A$ and $u\in V$,
since $l_{h}\equiv r_{h}\mmod{h}$. Moreover, $\rho_{\{,\}}(x)(u)$
is the zeroth order term of $\frac{l_{h}(x)u-r_{h}(x)u}{h}$ and
the linear map $\rho_{\{,\}}$ in \meqref{cmod} is well defined.
\item In the same way or by taking $\cdot$ and
$\cdot_h$ being null operations in the above definitions, we can
give the notions of a \textbf{topologically free bimodule}, an
$A_{h}$-\textbf{bimodule deformation} and the
\textbf{quasiclassical limit (\qcl) of the}
$A_{h}$-\textbf{bimodule deformation}.
\end{enumerate}
\end{rmk}

\begin{thm}
Let $(A,\circ)$ be a commutative associative algebra,
$(A_{h},\circ_{h})$ be an associative deformation of $(A,\circ)$
and $(A,\{,\},\circ)$ be the Poisson algebra as the \qcl.
Let $(V,\cdot,\rho_{\circ})$ be an $A$-module
algebra and $(V_{h},\cdot_{h},l_{h},r_{h})$ be an $A_{h}$-bimodule
algebra
deformation. Then the corresponding \qcl $(V,[,],\cdot,\rho_{\{,\}},\rho_{\circ}
 )$ is an $(A,\{,\},\circ)$-module Poisson algebra. Moreover, the \qcl of the associative deformation
$A_{h}\ltimes_{l_{h},r_{h}} V_{h}$ of the commutative associative algebra $A\ltimes_{\rho_{\circ}} V$ is exactly the Poisson algebra $A\ltimes_{\rho_{\{,\}},\rho_{\circ}} V$.
\mlabel{Thmdefmoda}
\end{thm}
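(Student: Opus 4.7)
The plan is to prove both conclusions simultaneously by recognizing that the associative deformation $A_h \ltimes_{l_h, r_h} V_h$ of the commutative associative algebra $A \ltimes_{\rho_\circ} V$ automatically produces a Poisson algebra structure on $A \oplus V$ via the classical QCL theorem, and then to identify this Poisson structure with the semidirect product $A \ltimes_{\rho_{\{,\}}, \rho_\circ} V$ by direct comparison of brackets and products.

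First, I would verify that $A \ltimes_{\rho_\circ} V$ is indeed commutative associative. Since $(V,\cdot,\rho_\circ)$ is an $A$-module algebra with $A$ commutative, by the discussion after Proposition \ref{abimodag-eqv} it is a commutative $A$-algebra, so $V$ itself is commutative, and the semidirect product multiplication $\odot$ given by \eqref{odotp} (with $\rho_{\{,\}}$ suppressed) is symmetric. By hypothesis $A_h \ltimes_{l_h, r_h} V_h$ is an associative deformation of this commutative associative algebra, so the standard QCL theorem recalled earlier gives a Poisson algebra structure $(A \oplus V, \llbracket,\rrbracket, \odot)$ with
\[
\llbracket (x,u),(y,v)\rrbracket \equiv \frac{(x,u)\odot_h(y,v)-(y,v)\odot_h(x,u)}{h} \pmod{h}.
\]

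Next, I would expand this bracket componentwise. The first component gives $(x\circ_h y - y\circ_h x)/h \equiv \{x,y\} \pmod{h}$ by the definition of the QCL of $(A_h,\circ_h)$. For the second component, the formula $(x,u)\odot_h(y,v) = (x\circ_h y,\, l_h(x)v + r_h(y)u + u\cdot_h v)$ yields
\[
\frac{l_h(x)v - r_h(x)v}{h} + \frac{r_h(y)u - l_h(y)u}{h} + \frac{u\cdot_h v - v\cdot_h u}{h} \pmod{h},
\]
which by \eqref{cmod} and Remark \ref{qclbd}\eqref{wdf} equals $\rho_{\{,\}}(x)v - \rho_{\{,\}}(y)u + [u,v]$. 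Thus $\llbracket,\rrbracket$ agrees with \eqref{lrbrp}. Meanwhile, $\odot$ is the zeroth-order reduction of $\odot_h$, and using $l_h \equiv l = \rho_\circ \pmod{h}$ and $r_h \equiv r = \rho_\circ \pmod{h}$ from Proposition \ref{edm}, this reduction is exactly \eqref{odotp}. Hence the Poisson algebra obtained as the QCL coincides with $A \ltimes_{\rho_{\{,\}},\rho_\circ} V$, establishing the second assertion.

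For the first assertion, it suffices to note that by Definition \ref{Pma} the very statement that $A \oplus V$ carries a Poisson algebra structure with bracket \eqref{lrbrp} and product \eqref{odotp} \emph{means} that $(V,[,],\cdot,\rho_{\{,\}},\rho_\circ)$ is an $(A,\{,\},\circ)$-module Poisson algebra; so the conclusion is immediate from what we have already shown. I do not anticipate a genuine obstacle: the computation is a componentwise unwinding and the key insight is simply that the bimodule algebra deformation is by construction an associative deformation of the semidirect product, so the classical QCL theorem does all the heavy lifting. The only subtlety to handle carefully is checking that the fraction defining $\rho_{\{,\}}$ in \eqref{cmod} really is well defined, which follows from the congruence $l_h \equiv r_h \pmod h$ that itself comes from $l = r = \rho_\circ$ in the undeformed $A$-module algebra.
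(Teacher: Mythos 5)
Your proposal is correct and follows essentially the same route as the paper's own proof: both compute the commutator of $\odot_h$ on $A_h\ltimes_{l_h,r_h}V_h$, divide by $h$, reduce modulo $h$, and identify the resulting bracket componentwise with \eqref{lrbrp} so that Definition \ref{Pma} yields the module Poisson algebra structure. The only cosmetic difference is that the paper first writes down the candidate bracket and then verifies the congruence, whereas you let the classical QCL theorem produce the bracket and then identify it; the computation is identical.
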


\begin{proof}
As the \qcl of $(V_{h},\cdot_{h})$, the triple $(V,[,],\cdot)$ is a Poisson algebra. Define
 operations $\llbracket,\rrbracket$ and $\odot$ on the direct
sum of vector spaces $A\oplus V$ by \eqref{lrbrp} and
\eqref{odotp} respectively, that is,
\vspb
  \begin{eqnarray*}
    \llbracket(x,u),(y,v)\rrbracket&=:&(\{x, y\},\rho_{\{,\}}(x)v-\rho_{\{,\}}(y)u+[u, v]), \\
   (x,u)\odot(y,v)&=:&(x\circ y,\rho_{\circ}(x)v+\rho_{\circ}(y)u+u\cdot v), \quad  x, y\in A, u, v\in V.
\vspa
  \end{eqnarray*}
Then $(A\oplus V,\odot)$ is a commutative associative algebra. Let
$(A_{h}\oplus V_{h},\odot_{h}):=A_{h}\ltimes_{l_{h},r_{h}} V_{h}$
be the associative deformation. For $x,y\in A$ and $u,v\in V$, we
have \vspb
\begin{equation*}
  \begin{split}
      & (x,u)\odot_{h}(y,v)-(y,v)\odot_{h}(x,u)\\
       &=(x\circ_{h}y-y\circ_{h}x,l_{h}(x)v-r_{h}(x)v-l_{h}(y)u+r_{h}(y)u+u\cdot_{h}v-v\cdot_{h}u) \\
       &\equiv(\{x, y\},\rho_{\{,\}}(x)v-\rho_{\{,\}}(y)u+[u,
       v])h\mmod{h^2}.
  \end{split}
\vspa
\end{equation*}
Then $(A\oplus V,\llbracket,\rrbracket,\odot)$ is the \qcl of the
associative deformation $(A_{h}\oplus V_{h},\odot_{h})$.
Therefore, $(A\oplus V,\llbracket,\rrbracket,\odot)$ is a Poisson
algebra and hence $(V,[,],\cdot,\rho_{\{,\}},\rho_{\circ}
 )$ is an $(A,\{,\},\circ)$-module Poisson algebra by Definition
 \mref{Pma}. The last statement holds since the two Poisson algebras $A\ltimes_{\rho_{\{,\}},\rho_{\circ}} V$ and $(A\oplus V,\llbracket,\rrbracket,\odot)$ coincide.
\end{proof}

Regarding a bimodule as a bimodule algebra where the multiplication is zero, we have the following consequence of Theorem \mref{Thmdefmoda}.
\vspb
\begin{cor}
    Let $(A,\circ)$ be a commutative associative algebra and $(V,\rho_{\circ})$ be an $A$-module.  Let $(A_{h},\circ_{h})$ be an associative deformation of $(A,\circ)$ and
    $(A,\{,\},\circ)$ be the \qcl Poisson algebra. Let $(V_{h},l_{h},r_{h})$ be an $A_{h}$-bimodule deformation of $(V,\rho_{\circ})$.
    Then the corresponding \qcl Poisson algebra $(V,\rho_{\{,\}},\rho_{\circ})$ is an $(A,\{,\},\circ)$-module. Moreover, the associative deformation
    $A_{h}\ltimes_{l_{h},r_{h}} V_{h}$ of the commutative associative algebra $A\ltimes_{\rho_{\circ}} V$
    is exactly the Poisson algebra $A\ltimes_{\rho_{\{,\}},\rho_{\circ}} V$.
\mlabel{cor-defmod}
\end{cor}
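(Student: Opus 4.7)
The plan is to deduce this corollary from Theorem~\ref{Thmdefmoda} by invoking the reduction flagged in Remark~\ref{qclbd}: a bimodule is precisely the special case of a bimodule algebra obtained by taking the internal multiplications on $V$ and $V_h$ to be the zero maps.

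First, I would equip $V$ with the trivial multiplication $u \cdot v := 0$ and equip $V_h$ with the $\mathbf{K}$-bilinear trivial multiplication $u \cdot_h v := 0$. With these choices, the second condition in Eq.~\eqref{cba} becomes vacuous while the first reduces to the statement that $\rho_\circ$ gives an $A$-module structure; hence $(V, \cdot, \rho_\circ)$ is an $A$-module algebra. Analogously, the bimodule algebra axioms in Eqs.~\eqref{abimod}--\eqref{abimoda} for $(V_h, \cdot_h, l_h, r_h)$ reduce to the bimodule axioms in \eqref{abimod}, which hold by hypothesis; so $(V_h, \cdot_h, l_h, r_h)$ is a topologically free $A_h$-bimodule algebra.

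Next I would apply Proposition~\ref{edm} to verify that $(V_h, \cdot_h, l_h, r_h)$ is an $A_h$-bimodule algebra deformation of $(V, \cdot, \rho_\circ)$. The first congruence in \eqref{e1} holds since $(A_h, \circ_h)$ is an associative deformation of $(A, \circ)$; the second is the trivial identity $0 \equiv 0 \mmod{h}$; and the congruences in \eqref{e2} are exactly the hypothesis that $(V_h, l_h, r_h)$ is an $A_h$-bimodule deformation of $(V, \rho_\circ)$.

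With these hypotheses in place, Theorem~\ref{Thmdefmoda} produces an $(A, \{,\}, \circ)$-module Poisson algebra $(V, [,], \cdot, \rho_{\{,\}}, \rho_\circ)$ and identifies the QCL of $A_h \ltimes_{l_h, r_h} V_h$ with $A \ltimes_{\rho_{\{,\}}, \rho_\circ} V$. Since $\cdot_h = 0$, the QCL bracket on $V$ satisfies $[u,v] \equiv (u \cdot_h v - v \cdot_h u)/h \equiv 0 \mmod{h}$, so both $[,]$ and $\cdot$ on $V$ vanish; the module Poisson algebra structure on $V$ therefore collapses to precisely the module datum $(V, \rho_{\{,\}}, \rho_\circ)$, yielding both assertions. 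No genuine obstacle arises, as the corollary is the null-multiplication specialization of the theorem and Remark~\ref{qclbd} already signals this passage.
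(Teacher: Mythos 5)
Your proposal is correct and follows essentially the same route as the paper, which derives the corollary from Theorem~\ref{Thmdefmoda} precisely by "regarding a bimodule as a bimodule algebra where the multiplication is trivial." You merely spell out the routine verifications (the hypotheses of Proposition~\ref{edm} and the vanishing of the induced bracket $[,]$ when $\cdot_h=0$) that the paper leaves implicit.
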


\vspb
 \subsection{Scalar deformations of $\mathcal{O}$-operators}
 \mlabel{ss:deformop}

We give a unified notion of $\calo$-operators on various algebras.

\begin{defi}
Let $\calp$ be a binary quadratic operad with binary operations
$\{\mu_i\}_{i\in I}$. Let $(A,\{\circ_i\}_{i\in I})$ and
$(V,\{\cdot_i\}_{i\in I})$ be $\calp$-algebras. Let $l_i,r_i:
A\rightarrow \textrm{End}_{\bfk}(V), i\in I,$ be linear maps such
that the quadruple $(V,\{\cdot\}_i,\{l_i\}_i,\{r_i\}_i)$ is an
$A$-bimodule $\calp$-algebra as given in
Definition~\mref{de:pbimod}. A linear map $T:V\rightarrow A$ is
called an \textbf{$\mathcal{O}$-operator of weight} $\lambda$
\textbf{ on the $\calp$-algebra} $(A,\circ)$ \textbf{associated to
the $A$-bimodule $\calp$-algebra} $(V,\cdot,l,r)$ if $T$ satisfies
\vspb
\begin{equation}\mlabel{ARRBO}
    T(u)\circ_i T(v)=T(l_i(T(u))v)+T(r_i(T(v))u)+\lambda T(u\cdot_i v),\;\; u,v\in V.
\vspa
\end{equation}
When an $A$-bimodule $(V,l_i,r_i)$ is regarded as an $A$-bimodule $\calp$-algebra equipped with the zero multiplications $\cdot_i$, the weight is irrelevant, and $T$ is called an {\bf $\mathcal
    O$-operator on the $\calp$-algebra $(A,\{\circ_i\}_{i\in I})$ associated to the $A$-bimodule $(V,\{l_i\}_i,\{r_i\}_i)$}.
\mlabel{de:oop}
\end{defi}

When $\calp$ is the operad of associative algebras, that is, when $(A,\circ)$ is an associative algebra and $(V,\cdot,l,r)$ is an $A$-bimodule algebra, we recover the notion of an $\mathcal{O}$-operator of weight $\lambda$ on $(A,\circ)$ associated to $(V,\cdot,l,r)$~\mcite{BGN4}.

\begin{rmk} For an associative algebra $(A,\circ)$, the regular representation gives an $A$-bimodule algebra $(A,\circ,L_\circ,R_\circ)$. Then an $\mathcal{O}$-operator of weight $\lambda$ on the associative algebra $(A,\circ)$ associated to this $A$-bimodule algebra gives the notion of a \textbf{Rota-Baxter operator} of the same weight, defined by
\vspb
  \begin{equation}\mlabel{ARBO}
    T(x)\circ T(y)=T(T(x)\circ y)+T(x\circ T(y))+\lambda  T(x\circ y),\;\; x,y\in A.
\vspa
\end{equation}
Thus an $\calo$-operator of weight $\lambda$ is also
called a \name{relative Rota-Baxter operator of weight $\lambda$}.
\end{rmk}

For later reference of notations, we recall the details when $\calp$ is the operad of Poisson algebras.
\begin{defi}\rm{\mcite{NB}}
  Let $(P,\{,\},\circ)$ be a Poisson algebra, $(Q,[,],\cdot,\rho_{\{,\}},\rho_{\circ})$ be a $P$-module Poisson algebra and $\lambda\in \bfk$. A linear map $T:Q\rightarrow P$ is called an \textbf{$\mathcal{O}$-operator} \textbf{of weight} $\lambda$ \textbf{ on 
  $(P,\{,\},\circ)$} \textbf{associated to 
  $(Q,[,],\cdot,\rho_{\{,\}},\rho_{\circ})$} if $T$ satisfies
\vspb
  \begin{eqnarray}
    \{T(u),T(v)\} &=& T(\rho_{\{,\}}(T(u))v)-T(\rho_{\{,\}}(T(v))u)+\lambda T([u,v]) \mlabel{rpl}\\
    T(u)\circ T(v) &=& T(\rho_{\circ}(T(u))v)+T(\rho_{\circ}(T(v))u)+\lambda T(u\cdot v), \;\;\quad  u, v\in Q.\mlabel{rpa}
  \end{eqnarray}
  In particular, when the operations $[,]$ and $\cdot$ on $Q$ are zero, that
  is, $(Q,\rho_{\{,\}},\rho_{\circ})$ is a $P$-module,
  $T$ is simply called an {\bf $\mathcal
    O$-operator on  
    $(P,\{,\},\circ)$} \textbf{associated to
    $(Q,\rho_{\{,\}},\rho_{\circ})$}.
\vspb
\end{defi}

\begin{rmk}
Similar to the case of associative algebras, a Rota-Baxter operator $T$ of weight $\lambda$ on a Poisson algebra $(P,\{,\},\circ)$ is defined by the following equations:
\vspb
\begin{eqnarray}
    \{T(x),T(y)\} &=& T(\{T(x),y\})+T(\{x,T(y)\})+\lambda  T(\{x,y\}), \label{rbl} \\
  T(x)\circ T(y) &=& T(T(x)\circ y)+T(x\circ T(y))+\lambda  T(x\circ y),\quad x,y\in P.
\vspb
\end{eqnarray}
\end{rmk}

Let $(A,\circ)$ be an associative algebra and $(V,\cdot,l,r)$ be an $A$-bimodule algebra.
Let $(A_{h},\circ_{h})$ be an associative deformation of $(A,\circ)$ and $(V_{h},\cdot_{h},l_{h},r_{h})$ be an $A_{h}$-bimodule algebra deformation of $(V,\cdot,l,r)$. For $T\in\mathrm{Hom}_{\bfk}(V,A)$, define the {\bf scalar deformation} of $T$ to be the $\bf K$-linear extension
\vspb
\begin{equation}\label{eq:TK}
T_\bfK: V_h\to A_h, \quad T_{\bf K}\Big(\sum_{i=0}^{\infty}v_{i}~h^{i}\Big):=\sum_{i=0}^{\infty}T(v_{i})~h^{i}, \quad \sum_{i=0}^\infty v_i~h^i\in V_h.
\vspb
\end{equation}
Then  $T_{\mathbf{K}}$ is an $\mathcal O$-operator of weight $\lambda$ on $(A_{h},\circ_{h})$ associated to  $(V_{h},\cdot_{h},l_{h},r_{h})$ means that
\vspb
\begin{equation}\label{eq:O-def} T_{\mathbf{K}}(u)\circ_{h} T_{\mathbf{K}}(v)=T_{\mathbf{K}}(l_{h}(T_{\mathbf{K}}(u))v)+T_{\mathbf{K}}(r_{h}(T_{\mathbf{K}}(v))u)+\lambda T_{\mathbf{K}}(u\cdot_{h} v), \;\; \quad  u, v\in V.
\vspb
\end{equation}
For $x,y\in A$ and $u\in V$, consider the expansions
\vspc
\[  x\circ_{h}y=\sum_{s=0}^{\infty}(x\circ_{s}y)h^{s},\;\;      l_{h}(x)u=\sum_{s=0}^{\infty}l_{s}(x)u~h^{s}\quad\textrm{and}\quad r_{h}(x)u=\sum_{s=0}^{\infty}r_{s}(x)u~h^{s},
\vspb
\]
with $x\circ_s y\in A$ and $l_{s}(x),r_{s}(x)\in
\mathrm{End}_{\bf k}(V)$. Then \eqref{eq:O-def} holds if and only if
\vspb
\begin{equation}\mlabel{infirrbo}
   T(u)\circ_{s} T(v)=T(l_{s}(T(u))v)+T(r_{s}(T(v))u)+\lambda T(u\cdot_{s} v),\;\; u,v\in V, s\geq 0.
\vspb
\end{equation}
When $(V,\cdot,l,r)$ is the $A$-bimodule algebra $(A,\circ,L_\circ,R_\circ)$, we replace $\cdot_{s}$ by $\circ_{s}$ and (\mref{infirrbo}) becomes
\vspb
\begin{equation}\mlabel{extrb}
   T(x)\circ_{s}T(y)=T(T(x)\circ_{s}y)+T(x\circ_{s}T(y))+\lambda T(x\circ_{s}y),\;\; x,y\in A, s\geq 0.
\vspb
\end{equation}

\begin{thm}
Let $(A,\circ)$ be a commutative associative algebra, and
let  $(A_{h},\circ_{h})$ be its associative deformation whose \qcl Poisson algebra is $(A,\{,\},\circ)$.
Also let $(V,\cdot,\rho_{\circ})$ be an $A$-module algebra, and let $(V_{h},\cdot_{h},l_{h},r_{h})$ be its $A_{h}$-bimodule algebra deformation whose \qcl  $(A,\{,\},\circ)$-module Poisson algebra is $(V,[,],\cdot,\rho_{\{,\}},\rho_{\circ})$. Let
$T\in\mathrm{Hom}_{\bfk}(V,A)$ be an $\mathcal{O}$-operator of weight $\lambda$ on $(A,\circ)$ associated to
$(V,\cdot,\rho_{\circ})$. If its scalar deformation $T_{\mathbf{K}}$ defined by (\ref{eq:TK}) is an $\calo$-operator of weight $\lambda$ on
$(A_{h},\circ_{h})$ associated to $(V_{h},\cdot_{h},l_{h},r_{h})$,
then $T$ is an $\mathcal{O}$-operator of weight $\lambda$ on the
Poisson algebra $(A,\{,\},\circ)$ associated to the
$(A,\{,\},\circ)$-module Poisson algebra
$(V,[,],\cdot,\rho_{\{,\}},\rho_{\circ})$.
\mlabel{invrb}
\end{thm}
\begin{proof}
Since $T$ is an $\mathcal{O}$-operator of weight $\lambda$ on $(A,\circ)$ associated to
$(V,\cdot,\rho_{\circ})$, it is direct to see that
(\mref{rpa}) is satisfied. Applying (\mref{infirrbo}) and exchanging $u$ and $v$, we obtain
\vspb
  \begin{equation}\mlabel{huan}
    T(v)\circ_{s} T(u)=T(l_{s}(T(v))u)+T(r_{s}(T(u))v)+\lambda T(v\cdot_{s}
    u),\;\; u,v\in V, s\geq 0.
\vspb
  \end{equation}
Taking $s=1$ and subtracting (\mref{huan}) from (\mref{infirrbo}),
we deduce that the equation
  \[T(u)\circ_{1} T(v)-T(v)\circ_{1} T(u)=T((l_{1}-r_{1})(T(u))v)-T((l_{1}-r_{1})(T(v))u)+\lambda T(u\cdot_{1} v-u\cdot_{1} v),\;\; u,v\in V,\]
holds if and only if (\mref{rpl}) holds. Thus $T$ is an
$\mathcal{O}$-operator of weight $\lambda$ on $(A,\{,\},\circ)$
associated to $(V,[,]$, $\cdot$, $\rho_{\{,\}}$, $\rho_{\circ})$.
\vspc
\end{proof}
\vspb

\section{Tridendriform deformations and scalar deformations of $\calo$-operators}
\label{S2}
In this section, we study tridendriform deformations of commutative tridendriform algebras in
terms of bimodule algebra deformations. This provides an application of Theorem~\ref{invrb} where the
deformations and \qcls with $\mathcal O$-operators can be constructed explicitly. We also give constructions of bimodule algebra deformations and tridendriform
deformations via derivations

\vspa

\subsection{Deformations of tridendriform algebras and the corresponding \qcls}
\vspb
\begin{defi}
\mcite{LR} A \textbf{tridendriform algebra} is a quadruple
$(A,\succ,\prec,\cdot)$ consisting of a vector space $A$ with three binary operations $\succ$, $\prec$ and $\cdot$ such that
\vspb
 \begin{align}
    (x\prec y)\prec z =&x\prec (y\circ z),
   &(x\succ y)\prec z =& x\succ(y\prec z),
   &(x\circ y)\succ z=& x\succ(y\succ z), \mlabel{Tri1}\\
    (x\succ y)\cdot z =& x\succ(y\cdot z),
    &(x\prec y)\cdot z =& x\cdot(y\succ z),
    &(x\cdot y)\prec z =& x\cdot(y\prec z), \\
& &  (x\cdot y)\cdot z =& x\cdot(y\cdot z),\;\; x,y,z\in A,\mlabel{Tri7}
\vspb
\end{align}
\vspc
where
\begin{equation}\label{eq:sum}
x\circ y:=x\prec y + x\succ y + x\cdot y,\;\; x,y\in A.
\end{equation}
If in addition, $x\succ y=y\prec x$ and $x\cdot y=y\cdot x$ for
$x,y\in A$, then we call the tridendriform algebra \textbf{commutative} and denote it by $(A,\succ,\cdot)$.
\end{defi}

\begin{rmk}
 We recall that a \textbf{dendriform algebra} is a triple $(A,\succ,\prec)$ consisting of a vector space $A$ with binary operations $\succ$ and $\prec$ such that the
  identities in (\mref{Tri1}) are satisfied for $x,y,z\in A$, where $x\circ y:=x\prec y + x\succ y$. Moreover, if $x\succ y=y\prec x$ for $x,y\in A$, we call it 
  a \textbf{Zinbiel algebra}
  and denote it by $(A,\succ)$. So we may regard a dendriform algebra $(A,\succ,\prec)$ as a tridendriform
  algebra $(A,\succ,\prec,\cdot)$ in which the multiplication $\cdot$ is zero.
\end{rmk}
There is a bimodule algebra characterization of a tridendriform algebra.
\begin{pro}{\rm{\mcite{BGN4}}}\label{pro-trieqv}
Let $\succ$, $\prec$ and $\cdot$ be binary operations on a
vector space $A$. Define a binary operation $\circ$ on $A$ by
\meqref{eq:sum}.
Then $(A,\succ,\prec,\cdot)$ is a tridendriform  algebra if and
only if $(A,\circ)$ is an associative algebra and
$(A,\cdot,L_{\succ},R_{\prec})$ is an $(A,\circ)$-bimodule
algebra. In this case, $\mathrm{id}_{A}$ is an
$\mathcal{O}$-operator of weight $1$ on $(A,\circ)$ associated to $(A,\cdot,L_{\succ},R_{\prec})$. Moreover, $(A,\succ,\cdot)$ is a commutative tridendriform algebra if and
only if $(A,\circ)$ is a commutative  associative algebra and
 $(A,\cdot,L_{\succ})$ is an $(A,\circ)$-module
algebra.
\mlabel{pro-ct}
\end{pro}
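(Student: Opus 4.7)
The plan is to unfold both sides of the equivalence into identities in $\succ$, $\prec$ and $\cdot$, observe that they match term by term, and separately deduce the associativity of $(A,\circ)$, which is assumed on the bimodule algebra side but not on the tridendriform side.

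I would first invoke Proposition \mref{abimodag-eqv} with $l = L_{\succ}$ and $r = R_{\prec}$. The three conditions in \eqref{abimod} translate respectively to $(x\circ y)\succ z = x\succ(y\succ z)$, $(x\succ y)\prec z = x\succ(y\prec z)$ and $(x\prec y)\prec z = x\prec(y\circ z)$, which are exactly the three identities in \eqref{Tri1}. The three conditions in \eqref{abimoda} translate, after an obvious renaming of variables, to $(x\succ y)\cdot z = x\succ(y\cdot z)$, $(x\prec y)\cdot z = x\cdot (y\succ z)$ and $(x\cdot y)\prec z = x\cdot(y\prec z)$, which are the three identities on the second line of the tridendriform definition. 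Together with the associativity of $(A,\cdot)$, namely \eqref{Tri7}, this accounts for all seven tridendriform axioms and, conversely, for all identities demanded by the bimodule algebra $(A,\cdot, L_{\succ}, R_{\prec})$ over $(A,\circ)$ apart from the associativity of $\circ$ itself.

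The remaining issue is that the bimodule algebra framework presupposes the associativity of $(A,\circ)$, while the tridendriform axioms do not. For the forward direction, I would expand $(x\circ y)\circ z$ and $x\circ(y\circ z)$ into nine summands each via \eqref{eq:sum}; the first identity in \eqref{Tri1} rewrites $(x\prec y)\prec z$ as $x\prec(y\prec z) + x\prec(y\succ z) + x\prec(y\cdot z)$, the third rewrites $(x\prec y)\succ z + (x\succ y)\succ z + (x\cdot y)\succ z$ as $x\succ(y\succ z)$, and the remaining five summands on each side are identified pairwise by the remaining five axioms. This yields the associativity of $(A,\circ)$ as a consequence of the tridendriform axioms, and the reverse direction is immediate once the bimodule algebra identities are recognized as the first six tridendriform axioms.

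For the assertion that $\mathrm{id}_A$ is an $\mathcal{O}$-operator of weight $1$ on $(A,\circ)$ associated to $(A,\cdot, L_{\succ}, R_{\prec})$, the defining equation \eqref{ARRBO} reduces to $u\circ v = L_{\succ}(u)v + R_{\prec}(v)u + u\cdot v$, which is precisely \eqref{eq:sum}. The commutative case follows by the same argument: the relations $x\succ y = y\prec x$ and $x\cdot y = y\cdot x$ force $L_{\succ} = R_{\prec}$, so the bimodule algebra condition collapses to the module algebra condition \eqref{cba}, while commutativity of $(A,\circ)$ is immediate from \eqref{eq:sum}. The only mildly delicate step in the whole argument is the nine-term bookkeeping used to derive associativity of $\circ$; beyond that, the proof is purely mechanical.
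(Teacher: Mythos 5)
Your proposal is correct. Note that the paper itself gives no proof of this proposition --- it is quoted from \cite{BGN4} --- so there is no in-paper argument to compare against; your reconstruction via Proposition~\ref{abimodag-eqv} (matching the six relations in \eqref{abimod}--\eqref{abimoda} with the first six tridendriform axioms, isolating associativity of $\circ$ as the one identity not directly supplied by the bimodule-algebra formalism, and verifying it by the nine-term expansion in which \eqref{Tri1} collapses three summands on each side and the remaining five pair off) is exactly the standard argument from the cited reference, and the bookkeeping checks out. The only point worth making explicit in the commutative case is that the reverse implication reconstructs $\prec$ from the module-algebra data by setting $x\prec y:=y\succ x$, which is implicit in the notation $(A,\succ,\cdot)$.
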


The notion of post-Lie algebras arose from the study of
operads~\mcite{V} with applications to numerical
analysis~\mcite{LM} and specializes to {\bf pre-Lie algebras} ~\cite{Bai2} when the Lie algebras are abelian.

\begin{defi}
   \rm{\mcite{V}} A \textbf{post-Lie algebra} is a triple $(\mathfrak{g},[,],\triangleright)$ consisting of a Lie algebra $(\mathfrak{g},[,])$ and a binary operation $\triangleright$ such that
   \vspb
\begin{eqnarray}
  x\triangleright[y,z] &=& [x\triangleright y,z]+[y,x\triangleright z],\mlabel{PostL1} \\
  \ [x,y]\triangleright z &=& x\triangleright(y\triangleright z)-(x\triangleright y)\triangleright z-y\triangleright(x\triangleright z)+(y\triangleright x)\triangleright z,\;\; x,y,z\in \mathfrak g. \mlabel{PostL2}
\end{eqnarray}
\vspb
 \end{defi}
\vspd

 \begin{defi}
  \rm{\mcite{BBGN}}  A \textbf{ post-Poisson algebra} is a triple $(P,[,],\triangleright,\succ,\cdot)$ consisting of a post-Lie algebra $(P,[,], \triangleright)$ and a commutative tridendriform algebra $(P, \succ,\cdot)$
  such that (\mref{PostP1}) and the following equations hold.
 \vspb
  \begin{align}
    [x,y\succ z] =& y\succ[x,z]-z\cdot(y\triangleright x),
      &x\triangleright(y\cdot z) =& (x\triangleright y)\cdot z+y\cdot(x\triangleright z),\\
    (x\circ y)\triangleright z =& x\succ(y\triangleright z)+y\triangleright(z\prec x),
      &x\triangleright(y\succ z) =& y\succ(x\triangleright z)+\{x,y\}\succ z,\mlabel{PostP5}
  \end{align}
  for $x,y,z\in P$, where
\begin{equation}\label{eq:summ}
x\circ y:=x\succ y+y\succ x+x\cdot y,\;\; \{x,y\}:=x\triangleright
y-y\triangleright x+[x,y],\;\;
  x,y\in P.
\end{equation}
 \end{defi}
\begin{rmk}
Introduced by Aguiar~\mcite{A2}, a \textbf{pre-Poisson algebra}
is a triple $(A,\triangleright,\succ)$ consisting of a Zinbiel algebra
$(A,\succ)$ and a pre-Lie algebra $(A,\triangleright)$
 such that the
   identities in (\mref{PostP5}) are satisfied for $x,y,z\in A$, where $\{x,y\}:=x\triangleright y-y\triangleright x$ and $x\circ y:=x\succ y+y\succ x$.
   So a pre-Poisson algebra is a post-Poisson algebra where the multiplications $\cdot$ and $[,]$ are zero.
 \end{rmk}

Similar to Proposition\,\ref{pro-ct}, the following statement holds.

\begin{pro}{\rm{\mcite{NB}}}
Let $\triangleright,[,],\succ$ and $\cdot$ be four binary
operations on a vector space $P$. Define  binary operations
$\circ$ and $\{,\}$ on $P$ by \meqref{eq:summ}.
Then $(P,\triangleright,[,],\succ,\cdot)$ is a post-Poisson
algebra if and only if $(P,\{,\},\circ)$ is a Poisson algebra and
$(P,[,],\cdot,L_{\triangleright},L_{\succ})$ is a
$(P,\{,\},\circ)$-module Poisson algebra. Moreover, in this case,
$\mathrm{id}_{P}$ is an $\mathcal{O}$-operator of weight 1 on
$(P,\{,\},\circ)$ associated to
$(P,[,],\cdot,L_{\triangleright},L_{\succ})$.
\mlabel{pro-cp}
\vspc
\end{pro}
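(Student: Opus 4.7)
The plan is to unfold Definition~\mref{Pma} of module Poisson algebra in its Manin--triple form: $(P,[,],\cdot,L_\triangleright,L_\succ)$ is a $(P,\{,\},\circ)$-module Poisson algebra if and only if the direct sum $P\oplus P$ endowed with the bracket $\llbracket,\rrbracket$ of \meqref{lrbrp} and the product $\odot$ of \meqref{odotp}, with $\rho_{\{,\}}=L_\triangleright$ and $\rho_\circ=L_\succ$, is itself a Poisson algebra. I then match each axiom of this semidirect Poisson structure on $P\oplus P$ against the post-Poisson axioms defining $(P,\triangleright,[,],\succ,\cdot)$, and finally read off the $\mathcal O$-operator assertion directly.

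The commutative associative side is immediate from Proposition~\mref{pro-ct}: the statement that $(P,\circ)$ is commutative associative and $(P,\cdot,L_\succ)$ is a $(P,\circ)$-module algebra is equivalent to $(P,\succ,\cdot)$ being a commutative tridendriform algebra, and by Definition~\mref{de:pbimod} this is in turn equivalent to $(P\oplus P,\odot)$ being a commutative associative algebra. The Lie side is the post-Lie analogue, namely that $(P,[,],\triangleright)$ is a post-Lie algebra if and only if $(P,\{,\})$ is a Lie algebra and $(P,[,],L_\triangleright)$ is a $(P,\{,\})$-module Lie algebra, equivalently $(P\oplus P,\llbracket,\rrbracket)$ is a Lie algebra. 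I would verify this by direct expansion: the base component of the Jacobi identity for $\llbracket,\rrbracket$ yields the Jacobi identity for $\{,\}$, which is equivalent to \meqref{PostL2} together with skew-symmetry of $\{,\}$, while the module component yields the Jacobi identity for $[,]$ together with \meqref{PostL1}.

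The heart of the matter is the Leibniz compatibility
\[
\llbracket(x,u),(y,v)\odot(z,w)\rrbracket=\llbracket(x,u),(y,v)\rrbracket\odot(z,w)+(y,v)\odot\llbracket(x,u),(z,w)\rrbracket
\]
for all $x,y,z,u,v,w\in P$. Expanding both sides via \meqref{lrbrp}, \meqref{odotp} and collecting terms according to which of $u,v,w$ are nonzero produces a family of identities: taking $u=v=w=0$ recovers \meqref{PostP1} for $(P,\{,\},\circ)$; taking exactly one of $u,v,w$ nonzero recovers, one by one, the three compatibilities displayed before \meqref{PostP5} and the identity in \meqref{PostP5} (using $x\prec y=y\succ x$); the terms with two or three module arguments are absorbed by the module-algebra and module-Lie-algebra axioms already established. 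Conversely, the post-Poisson axioms together imply the full Leibniz rule on $P\oplus P$. For the $\mathcal O$-operator assertion, substituting $T=\mathrm{id}_P$, $\lambda=1$, $\rho_{\{,\}}=L_\triangleright$, $\rho_\circ=L_\succ$ into \meqref{rpl} and \meqref{rpa} yields precisely the defining identities in \meqref{eq:summ}, so the statement is automatic.

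The main obstacle will be the bookkeeping in the Leibniz step: one must check that the decomposition of the Leibniz identity across base/module positions in each argument matches the four post-Poisson compatibilities with neither redundancy nor extra constraints, and in particular that the conversion $x\prec y=y\succ x$ produces the term $y\triangleright(z\prec x)$ in the third compatibility with the correct ordering, and that no additional identity relating $[,]$, $\triangleright$, $\succ$ and $\cdot$ is spuriously forced.
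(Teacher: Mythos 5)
The paper offers no proof of Proposition \mref{pro-cp}: it is quoted from \mcite{NB}, so there is no in-text argument to measure yours against. Your overall strategy --- unfolding Definition \mref{Pma} so that the module Poisson condition becomes the statement that $P\ltimes_{L_{\triangleright},L_{\succ}}P$ is a Poisson algebra, splitting that into the Lie part, the commutative associative part (via Proposition \mref{pro-ct}) and the Leibniz compatibility, and reading off the $\mathcal O$-operator claim by substituting $T=\mathrm{id}_P$, $\lambda=1$ into \meqref{rpl}--\meqref{rpa} to recover \meqref{eq:summ} --- is the natural one and is surely how the result is proved in \mcite{NB}. The $\mathcal O$-operator part and the two structural reductions are correct as you describe them.

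The bookkeeping in your Leibniz step, however, is wrong in two concrete ways, and the second is a genuine gap. First, the cases with exactly one of $u,v,w$ nonzero do not produce all four compatibilities: the cases $v\neq0$ and $w\neq0$ both give the same identity $x\triangleright(y\succ w)=y\succ(x\triangleright w)+\{x,y\}\succ w$, while the compatibilities $[x,y\succ z]=y\succ[x,z]-z\cdot(y\triangleright x)$ and $x\triangleright(y\cdot z)=(x\triangleright y)\cdot z+y\cdot(x\triangleright z)$ have $[,]$ or $\cdot$ evaluated on module entries and therefore arise only from the cases with \emph{two} module entries (and the three-module case gives the Poisson compatibility of $(P,[,],\cdot)$). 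None of these is ``absorbed by the module-algebra and module-Lie-algebra axioms already established'', since those axioms never couple the Lie-side operations $[,],\triangleright$ with the associative-side operations $\cdot,\succ$. Second, the obstacle you flagged is real and left unresolved: the case $u\neq0$, $v=w=0$ yields $(y\circ z)\triangleright u=y\succ(z\triangleright u)+z\succ(y\triangleright u)$, whose second term has outer operation $\succ$, whereas the axiom in \meqref{PostP5} reads $(x\circ y)\triangleright z=x\succ(y\triangleright z)+y\triangleright(z\prec x)$ with outer operation $\triangleright$ in the second term; the substitution $z\prec x=x\succ z$ does not reconcile them. The two forms are equivalent only modulo the second identity of \meqref{PostP5} together with the commutativity of $\circ$ and the skew-symmetry of $\{,\}$ (symmetrize the displayed axiom in $x,y$ and eliminate the terms $y\triangleright(x\succ z)$ and $x\triangleright(y\succ z)$ using \meqref{PostP5}), and this conversion must be supplied explicitly in both directions of the ``if and only if''. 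As written, your term-matching would either appear to force a spurious extra identity or fail to recover the stated axiom, which is exactly the failure mode you anticipated.
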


\begin{defi}
  A \textbf{topologically free tridendriform algebra} is a quadruple $(A_{h},\succ_{h},\prec_{h},\cdot_{h})$ consisting of a topologically free $\mathbf{K}$-module $A_{h}$,  $\mathbf{K}$-bilinear operations
  $\succ_{h},\prec_{h}$ and $\cdot_{h}$ such that
   $(A_h,\succ_h$, $\prec_h$, $\cdot_h)$ is a tridendriform algebra.

A \textbf{deformation of a tridendriform algebra} $(A,\succ,\prec,\cdot)$ is a topologically free tridendriform algebra $(A_{h},\succ_{h},\prec_{h},\cdot_{h})$ such that
\begin{equation} \mlabel{eq:dt2}
 x\succ_{h} y\equiv x\succ y\mmod{h}, \;x\prec_{h}y \equiv x\prec y\mmod{h},\; x\cdot_{h}y \equiv x\cdot y\mmod{h},\;x,y\in A.
\end{equation}
We also call  $(A_{h},\succ_{h},\prec_{h},\cdot_{h})$ a \textbf{tridendriform algebra deformation} or simply a \textbf{tridendriform deformation} of $(A,\succ,\prec,\cdot)$.
\end{defi}

Let $(A_{h},\succ_{h},\prec_{h},\cdot_{h})$ be a tridendriform deformation of $(A\succ,\prec,\cdot)$. For each $\ast_{h}\in \{\succ_{h},\prec_{h},\cdot_{h},\circ_{h}\}$, we can write
\vspc
\begin{equation}\mlabel{expan}
 x\ast_{h}y=\sum_{s=0}^{\infty}(x\ast_{s}y)h^{s},\;\;
 x,y\in A,
\end{equation}
where $x\ast_{s}y\in A$. By \meqref{eq:dt2}, we have
$x\ast_{0}y=x\ast y$ for   $x,y\in A$.
\begin{defi}
   Let $(A_{h},\succ_{h},\prec_{h},\cdot_{h})$ be a tridendriform deformation of a commutative tridendriform algebra $(A,\succ,\cdot)$. Let $[,]$ and $\triangleright$ be the binary operations on $A$ such that
   \begin{equation}
     \ [x,y] \equiv \frac{x\cdot_{h}y-y\cdot_{h}x}{h}\mmod{h},
     \quad
     x\triangleright y \equiv \frac{x\succ_{h}y-y\prec_{h}x}{h}\mmod{h},
     \quad  x, y\in A. \mlabel{brak}
   \end{equation}
Then we call $(A,[,],\triangleright,\succ,\cdot)$ the \textbf{quasiclassical limit (\qcl) of the tridendriform deformation} $(A_{h},\succ_{h},\prec_{h},\cdot_{h})$.
\end{defi}

\begin{rmk}
By setting $\cdot=\cdot_h=0$ in the above definitions, we define the notions of a {\bf topologically free dendriform algebra},
a {\bf dendriform deformation} and the {\bf quasiclassical limit of a dendriform deformation}.
\end{rmk}

For the \qcl $(A,[,],\triangleright,\succ,\cdot)$,
first we note that, as  the \qcl of the
associative deformation $(A_h,\cdot_{h})$, the triple
$(A,[,],\cdot)$ is a Poisson algebra. Define a binary operation on $A_h$ by
\vspb
\begin{equation}\label{eq:sumh}
    x_{h}\circ_{h}y_{h}:=x_{h}\succ_{h} y_{h}+x_{h}\prec_{h}y_{h} + x_{h}\cdot_{h}y_{h},\quad x_{h},y_{h}\in A_{h}.
\vspa
\end{equation}
Next, with the binary
operation $\{,\}$ on $A$ defined by
\vspb
\begin{equation}
   \{x,y\}\equiv \frac{x\circ_{h}y-y\circ_{h}x}{h}\ (\textrm{mod}~h), \quad  x, y\in A,
\vspa
 \end{equation}
and $\circ$ defined by \meqref{eq:sum}, the triple $(A,\{,\},\circ)$ is the \qcl of the
associative deformation $(A_h,\circ_{h})$ and hence is also a
Poisson algebra. Let $x,y\in A$. Using the expansion in
\meqref{expan},
 we have
\vspb
$$  x\triangleright y = x\succ_{1}y-y\prec_{1}x, \
  \ [x,y] = x\cdot_{1}y-y\cdot_{1}x, \ \
  \{x,y\} = x\circ_{1}y-y\circ_{1}x.
\vspa
$$
  It is direct to see that
\vspc
\begin{equation}\label{eq-br}
    \{x,y\}=x\triangleright y-y\triangleright x+[x,y].
\vspa
\end{equation}
Hence the operation $\{,\}$ also splits into three operations.

\begin{thm}{\rm{\mcite{OPV}}}
Let $(A_{h},\succ_{h},\!\prec_{h},\cdot_{h})$ be a tridendriform deformation of a commutative tridendriform algebra $(A,\succ,\!\cdot)$. Then the corresponding \qcl $(A,[,],\triangleright,\succ,\!\cdot)$ is a post-Poisson algebra.
\mlabel{mainthm}
\vspd
\end{thm}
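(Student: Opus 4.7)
The plan is to assemble the proof from the equivalent bimodule-algebra characterizations of (commutative) tridendriform and post-Poisson algebras (Propositions~\mref{pro-trieqv} and~\mref{pro-cp}) and then apply the deformation-to-\qcl machinery of Theorems~\mref{Thmdefmoda} and~\mref{invrb} to the identity operator, which is automatically an $\calo$-operator of weight $1$ in both the source and target settings.

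First I would recast the tridendriform deformation $(A_{h},\succ_{h},\prec_{h},\cdot_{h})$ in bimodule-algebra terms. By Proposition~\mref{pro-trieqv} applied over $\bfK$, the product $\circ_{h}$ defined by \meqref{eq:sumh} makes $(A_{h},\circ_{h})$ a topologically free associative algebra, $(A_{h},\cdot_{h},L_{\succ_{h}},R_{\prec_{h}})$ is an $(A_{h},\circ_{h})$-bimodule algebra, and $\mathrm{id}_{A_{h}}$ is an $\calo$-operator of weight $1$ for this bimodule algebra. The reductions modulo $h$ of $\circ_{h},\cdot_{h},\succ_{h},\prec_{h}$ coincide with $\circ,\cdot,\succ,\succ^{\mathrm{op}}$ on $A$ by \meqref{eq:dt2}, so Proposition~\mref{edm} shows that $(A_{h},\cdot_{h},L_{\succ_{h}},R_{\prec_{h}})$ is an $(A_{h},\circ_{h})$-bimodule algebra deformation of the $A$-module algebra $(A,\cdot,L_{\succ})$ provided by Proposition~\mref{pro-trieqv} for $(A,\succ,\cdot)$.

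Next I would invoke Theorem~\mref{Thmdefmoda} to produce the \qcl: $(A,\{,\},\circ)$ is a Poisson algebra and $(A,[,],\cdot,\rho_{\{,\}},\rho_{\circ})$ is an $(A,\{,\},\circ)$-module Poisson algebra, where by construction $\rho_{\circ}=L_{\succ}$, and by \meqref{cmod} together with \meqref{brak},
\[
\rho_{\{,\}}(x)u\equiv\frac{L_{\succ_{h}}(x)u-R_{\prec_{h}}(x)u}{h}=\frac{x\succ_{h}u-u\prec_{h}x}{h}\equiv x\triangleright u\pmod{h},
\]
so $\rho_{\{,\}}=L_{\triangleright}$. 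The compatibility $\{x,y\}=x\triangleright y-y\triangleright x+[x,y]$ is exactly \meqref{eq-br}, matching the Poisson bracket on $A$ with the one obtained from the deformation $(A_{h},\circ_{h})$.

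To bring the $\calo$-operator along, I would apply Theorem~\mref{invrb} to $T=\mathrm{id}_{A}$, whose scalar deformation is $T_{\bfK}=\mathrm{id}_{A_{h}}$; the latter is an $\calo$-operator of weight $1$ on $(A_{h},\circ_{h})$ associated to $(A_{h},\cdot_{h},L_{\succ_{h}},R_{\prec_{h}})$ by Proposition~\mref{pro-trieqv}. Theorem~\mref{invrb} then yields that $\mathrm{id}_{A}$ is an $\calo$-operator of weight $1$ on the Poisson algebra $(A,\{,\},\circ)$ associated to the module Poisson algebra $(A,[,],\cdot,L_{\triangleright},L_{\succ})$. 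Proposition~\mref{pro-cp} then converts this data back into the claim that $(A,[,],\triangleright,\succ,\cdot)$ is a post-Poisson algebra.

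The conceptual work is all in the setup; the main thing to watch is the bookkeeping identifying $\rho_{\{,\}}$ and $\rho_{\circ}$ with $L_{\triangleright}$ and $L_{\succ}$ so that the module Poisson structure delivered by Theorem~\mref{Thmdefmoda} really is the one demanded by Proposition~\mref{pro-cp}. Once this matching is in place the post-Poisson axioms are automatic, and no direct verification of the post-Poisson identities on $(A,[,],\triangleright,\succ,\cdot)$ is needed.
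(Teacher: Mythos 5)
Your argument is correct and is essentially the proof the paper itself gives in Remark~\ref{rmk-pro-diaid}: recast the tridendriform deformation as a bimodule algebra deformation via Proposition~\ref{pro-ct} and Proposition~\ref{pro-diaid}~(\ref{pro-diaid1}), apply Theorem~\ref{Thmdefmoda} to get the module Poisson algebra $(A,[,],\cdot,L_{\triangleright},L_{\succ})$, and convert back with Proposition~\ref{pro-cp}. The only difference is that your detour through Theorem~\ref{invrb} and the $\calo$-operator $\mathrm{id}_A$ is not actually needed, since the ``if'' direction of Proposition~\ref{pro-cp} requires only the Poisson algebra and module Poisson algebra structures, both already supplied by Theorem~\ref{Thmdefmoda}.
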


By taking $\cdot$ and $\cdot_{h}$ in Theorem \ref{mainthm} to be
zero multiplications, we obtain

\begin{cor}{\rm\mcite{A2}}
  Let $(A_{h},\succ_{h},\prec_{h})$ be a dendriform deformation of a Zinbiel algebra $(A,\succ)$. Then the corresponding \qcl $(A,\triangleright,\succ)$ is a pre-Poisson algebra.
\mlabel{cor-defdendri}
\end{cor}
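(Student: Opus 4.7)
The plan is to deduce this corollary as a direct specialization of Theorem~\ref{mainthm} by realizing the dendriform setting as the tridendriform setting with trivial third operation. The key observation is that a Zinbiel algebra $(A,\succ)$ is precisely a commutative tridendriform algebra $(A,\succ,\cdot)$ with $\cdot=0$, and accordingly a dendriform deformation $(A_h,\succ_h,\prec_h)$ of $(A,\succ)$ is a tridendriform deformation $(A_h,\succ_h,\prec_h,\cdot_h)$ of $(A,\succ,\cdot)$ with $\cdot_h=0$: the tridendriform axioms \eqref{Tri1}--\eqref{Tri7} involving $\cdot$ collapse either to trivialities or to the dendriform axioms, and the congruence for $\cdot_h$ in \eqref{eq:dt2} is automatic.

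Next, I would apply Theorem~\ref{mainthm} to this extended tridendriform deformation to obtain that the QCL $(A,[,],\triangleright,\succ,\cdot)$ is a post-Poisson algebra. Since $\cdot_h\equiv 0$, the first congruence in \eqref{brak} forces $[x,y]\equiv 0 \mmod{h}$ for all $x,y\in A$, so $[,]=0$; similarly the commutative product $\cdot$ on the QCL, obtained as the $h=0$ term of $\cdot_h$, is identically zero. Only $\triangleright$, defined by the second congruence in \eqref{brak}, and $\succ$ survive nontrivially. The combined bracket and product in \eqref{eq:summ} consequently degenerate to $\{x,y\}=x\triangleright y-y\triangleright x$ and $x\circ y=x\succ y+y\succ x$, as required in a pre-Poisson algebra.

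The final step is to check that a post-Poisson algebra with trivial Lie bracket and trivial commutative product is precisely a pre-Poisson algebra on $(A,\triangleright,\succ)$. With $[,]=0$, the post-Lie axiom \eqref{PostL1} is vacuous and \eqref{PostL2} reduces to the defining identity of a pre-Lie algebra; with $\cdot=0$, the commutative tridendriform axioms reduce to the Zinbiel identity; the compatibility \eqref{PostP1} becomes trivial; and the four mixed identities in the post-Poisson definition (including \eqref{PostP5}) restrict to exactly the two compatibilities defining a pre-Poisson algebra between the pre-Lie and Zinbiel structures. This is pure bookkeeping and is the only step requiring verification, but no genuine obstacle arises because the specialization is parallel to the correspondence between Proposition~\ref{pro-cp} and its commutative-trivial/Lie-trivial analogue for pre-Poisson algebras.
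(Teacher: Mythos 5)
Your proposal is correct and follows exactly the paper's route: the paper obtains this corollary precisely by taking $\cdot$ and $\cdot_{h}$ to be trivial operations in Theorem~\ref{mainthm}, which is what you do, with the additional (correct) bookkeeping that a post-Poisson algebra with vanishing $[,]$ and $\cdot$ is a pre-Poisson algebra.
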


We give the following example of tridendriform deformations
constructed from an arbitrary associative deformation.

\begin{ex}\mlabel{exps}
Let $(A^{n},\cdot)$ be the direct product of $n$ copies of an associative algebra $(A,\diamond)$, equipped with the product
\vspb
\begin{equation}\mlabel{cdot}
a\cdot b=(a_{1}\diamond b_{1},\ldots, a_{i}\diamond b_{i},\ldots, a_{n}\diamond b_{n}), \quad a=(a_{1},\ldots,a_{n}), b=(b_{1},\ldots,b_{n})\in A^{n}.
\vspb
\end{equation}
Set
\vspc
\begin{equation}
   a\succ b:=
    (0,a_{1}\diamond b_{2},\ldots, \sum_{j=1}^{n-1}a_{j}\diamond b_{n}),  \quad
   a\prec b := (0,a_{2}\diamond b_{1},\ldots,  \sum_{j=1}^{n-1}a_{n}\diamond b_{j}).
   \label{succ}
\vspb
\end{equation}
Then $(A^n,\succ,\prec,\cdot)$ is a tridendriform algebra. If $(A_{h},\diamond_{h})$ is an associative deformation of $(A,\diamond)$, denote
\[a_{i}\diamond_{h}b_{j}=\sum_{s=0}^{\infty}(a_{i}\diamond_{s}b_{j})h^{s}, 1\leq i,j\leq n.\]
Then the following equations define a tridendriform deformation $(A^{n}_{h},\succ_{h},\prec_{h},\cdot_{h})$ of $(A^n,\succ,\prec,\cdot)$:
\vspa
    \begin{eqnarray}
   & a\succ_{h}b := \sum\limits_{s=0}^{\infty}(0,a_{1}\diamond_{s} b_{2},\ldots, \sum\limits_{j=1}^{n-1}a_{j}\diamond_{s} b_{n})h^{s}, \ \ 
    a\prec_{h}b := \sum\limits_{s=0}^{\infty}(0,a_{2}\diamond_{s} b_{1},\ldots, \sum\limits_{j=1}^{n-1}a_{n}\diamond_{s} b_{j})h^{s},&\\
    &a\cdot_{h}b := \sum\limits_{s=0}^{\infty}(a_{1}\diamond_{s}b_{1},..., a_{i}\diamond_{s}b_{i},..., a_{n}\diamond_s b_{n})h^{s}.&
    \end{eqnarray}
\vspa
    If $(A,\diamond)$ is commutative and $(A,[,],\diamond)$ is the \qcl of $(A_{h},\diamond_{h})$, then the \qcl $(A^{n},[,]_n,\triangleright,\succ,\cdot)$ of $(A^{n}_{h},\succ_{h},\prec_{h},\cdot_{h})$ is determined by (\mref{cdot}), (\mref{succ}) and the equations
\vspb
$$
[a,b]_n = ([a_{1}, b_{1}], ..., [a_{i}, b_{i}], ..., [a_{n}, b_{n}]),  \quad
    a\triangleright b=
    (0,[a_{1}, b_{2}], ..., \sum_{j=1}^{i-1}[a_{j},b_{i}], ..., \sum_{j=1}^{n-1}[a_{j}, b_{n}]).
\vspc
$$
\end{ex}

We recall the following results on derived structures from $
\calo$-operators.
\begin{pro}{\rm{\mcite{BGN4,NB}}}
Let $(A,\circ)$ $($resp. $(P,\{,\},\circ)$$)$ be an associative algebra $($resp. a Poisson algebra$)$ and $(V,\cdot,l,r)$ $($resp. $(Q,[,],\cdot,\rho_{\{,\}},\rho_{\circ}))$ be an $A$-bimodule algebra $($resp. a $P$-module Poisson algebra$)$. Let
$T:V\rightarrow A$ $($resp. $T:Q\rightarrow P$$)$ be an $\mathcal{O}$-operator of weight $\lambda$
on $(A,\circ)$ associated to $(V,\cdot,l,r)$ $($resp. on $(P,\{,\},\circ)$ associated to  $(Q,[,],\cdot,\rho_{\{,\}},\rho_{\circ}))$. Set
\[u\succ v:=l(T(u))v , \quad u\prec v:=r(T(v))u ,\quad u,v\in V.\]
$$ (\text{resp. } u\triangleright v:=\rho_{\{,\}}(T(u))v,\quad u\succ v:=\rho_{\circ}(T(u))v,\quad u,v\in Q).$$
Then $(V,\succ,\prec,\lambda\cdot)$ is a tridendriform algebra $($resp. $(Q, \lambda[,],\triangleright,\succ,\lambda\cdot)$ is a post-Poisson algebra$)$.
\mlabel{THTRT} \mlabel{PPRT}
\end{pro}

Let $(A,\circ)$ be a commutative associative algebra and $(V,\cdot,\rho_{\circ})$ be an $A$-module
 algebra.  Let $(A_{h},\circ_{h})$ be an associative deformation of $(A,\circ)$ and $(V_{h},\cdot_{h},l_{h},r_{h})$ be an $A_{h}$-bimodule algebra deformation of $(V,\cdot,\rho_\circ)$.
Let $T$ be an
$\mathcal{O}$-operator of weight $\lambda$
on $(A,\circ)$ associated to $(V,\cdot,\rho_{\circ})$.
By Proposition \mref{THTRT}, $(V,\succ,\lambda\cdot)$
is a commutative tridendriform algebra, where
$u\succ v:=\rho_{\circ}(T(u))v, u,v\in V$.
Suppose that the operator
$T_{\mathbf{K}}$ defined by (\ref{eq:TK}) is an $\calo$-operator of weight
$\lambda$ on $(A_{h},\circ_{h})$ associated to
$(V_{h},\cdot_{h},l_{h},r_{h})$.
Set
\vspb
 \[u_{h}\succ_{h} v_{h}:=l_{h}(T_{\bf K}(u_{h}))v_{h},\quad u_{h}\prec_{h}v_{h}:=r_{h}(T_{\bf K}(v_{h}))u_{h},\;\; u_{h},v_{h}\in V_{h}.\]
    By Proposition
    \mref{THTRT} again, $(V_{h},\succ_{h},\prec_{h},\lambda\cdot_{h})$ is a
    tridendriform algebra. Since $(V_{h},\cdot_{h})$ is an associative deformation of $(V,\cdot)$, we deduce that $\lambda x\cdot_{h}y\equiv \lambda x\cdot y\mmod{h}$. Because $(V_{h},\cdot_{h},l_{h},r_{h})$ is an $A_{h}$-bimodule algebra deformation of $(V,\cdot,\rho_\circ)$, by Proposition \ref{edm} we deduce that
\vspb
    \begin{eqnarray*}
        u\succ_{h} v&=&l_{h}(T_{\bf K}(u))v\equiv \rho_{\circ}(T(u))v\mmod{h}  \equiv u\succ v\mmod{h},\\
        u\prec_{h} v&=&r_{h}(T_{\bf K}(v))u\equiv \rho_{\circ}(T(v))u\mmod{h}\equiv v\succ u\mmod{h}\equiv u\prec v\mmod{h},  u,v\in V.
    \end{eqnarray*}
    Thus  we  conclude that $(V_{h},\succ_{h},\prec_{h},\lambda\cdot_{h})$ is a tridendriform deformation of
    $(V,\succ,\lambda\cdot)$. By Theorem~\ref{mainthm}, let
$(V,[,]',\triangleright',\succ,\lambda\cdot)$  be its
\qcl which is a post-Poisson algebra, where
\vspb
$$[u,v]':=\lambda (u\cdot_1 v-v\cdot_1
u),\;\; u\triangleright' v:=u\succ_{1}v-v\prec_{1}u,\;\;
u,v\in V.
\vspb
$$
Here we use the expansion in \meqref{expan}.

On the other hand, suppose that $(A,\{,\},\circ)$, $(V,[,],\cdot)$
and $(V,[,],\cdot,\rho_{\{,\}},\rho_{\circ})$ are the
\qcls of  $(A_{h},\circ_{h})$, $(V_{h},\cdot_{h})$
and $(V_{h},\cdot_{h},l_{h},r_{h})$ respectively. By
Theorems~\ref{Thmdefmoda} and~\mref{invrb}, $T$ is an
$\mathcal{O}$-operator of weight $\lambda$ on the Poisson algebra
$(A,\{,\},\circ)$ associated to $(V,[,],\cdot,\rho_{\{,\}},\rho_{\circ})$. By
Proposition~\ref{PPRT}, there is a post-Poisson algebra $(V,
\lambda[,],\triangleright,\succ,\lambda\cdot)$. Note that
\vspb
$$
[u,v]=u\cdot_1v-v\cdot_1 v, \quad u\succ_{1}v-v\prec_{1}u=l_{1}(T(u))v-r_{1}(T(u))v=\rho_{\{,\}}(T(u))v,\;\;
u,v\in V.
\vspb
$$
Hence $[u,v]'=\lambda [u,v]$ and
$u\triangleright' v=u\triangleright v$ for $u,v\in V$. That
is, the two post-Poisson algebras
$(V,[,]',\triangleright',\succ,\lambda\cdot)$ and $(V,
\lambda[,],\triangleright,\succ,\lambda\cdot)$ coincide. In summary, we obtain the following conclusion.

\begin{pro}\label{pro-diagotri}
With the above notations, starting from an $\calo$-operator $T$, the topologically free tridendriform
algebra $(V_{h}$, $\succ_{h}$, $\prec_{h}$, $\lambda\cdot_{h})$
induced by the scalar deformation $\calo$-operator $T_{\mathbf{K}}$ of the $\calo$-operator $T$
coincides with the tridendriform
deformation of the commutative tridendriform algebra $(V,\succ,\lambda\cdot)$ induced by the $\calo$-operator $T$.
Moreover, the \qcl post-Poisson algebra of the deformation tridendriform algebra $(V_{h}$, $\succ_{h}$, $\prec_{h}$, $\lambda\cdot_{h})$ coincides with the post-Poisson
algebra induced by the $\calo$-operator $T$ on the \qcl Poisson algebra $(A,\{,\},\circ)$.
In other words, the following  diagram commutes.
\vspa
\begin{displaymath}
  \xymatrixcolsep{6pc}\xymatrixrowsep{3pc}
  \xymatrix{
  *\txt{\small $\mathcal{O}$-operator $T$ on \\
    \small $(A,\circ)$ associated to\\
    \small $(V,\cdot,\rho_{\circ})$}
   \ar[d]|-{
    \txt{\tiny Proposition \mref{THTRT}}
    }\ar[r]^{\txt{\tiny deformation}} & *\txt{\small $\mathcal{O}$-operator $T_\mathbf{K}$ on \\ \small $(A_{h},\circ_{h})$ associated to \\\small $(V_{h},\cdot_{h},l_{h},r_{h})$ }\ar[r]^{\txt{ \tiny QCL}}_{\txt{\tiny Theorem~\mref{invrb}
        }}\ar[d]|-{\txt{\tiny Proposition \mref{THTRT}}}
   & *\txt{\small $\mathcal{O}$-operator $T$ on \\\small $(A,\{,\},\circ)$ associated to\\ \small $(V,[,],\cdot,\rho_{\{,\}},\rho_{\circ})$}\ar[d]|-{\txt{\tiny Proposition \mref{PPRT} }}\\
   *\txt{\small commutative \\ \small tridendriform algebra\\ \small $(V,\succ,\lambda\cdot)$}
   \ar[r]^{\txt{\tiny deformation}}& *\txt{\small topologically free\\ \small tridendriform algebra\\$(V_{h},\succ_{h},\prec_{h},\lambda\cdot_{h})$}\ar[r]^{\txt{\tiny \qcl}}_{\txt{\tiny Theorem \mref{mainthm}}}
   & *\txt{\small post-Poisson\\ \small  algebra\\ \small $(V,\lambda [,],\triangleright,\succ,\lambda\cdot)$.
}}
\vspd
 \end{displaymath}
 \mlabel{diag-r1}
\vspb
 \end{pro}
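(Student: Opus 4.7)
The plan is to establish two separate coincidences: that of the topologically free tridendriform algebras appearing in the middle column, and that of the two post-Poisson algebras at the bottom-right corner. Both reduce to matching operations term-by-term after unwinding the defining formulas.

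First I would verify that the tridendriform algebra on $V_h$ produced from $T_{\mathbf{K}}$ via Theorem~\mref{THTRT} is indeed a tridendriform deformation of $(V,\succ,\lambda\cdot)$. Since $T_{\mathbf{K}}$ is an $\calo$-operator of weight $\lambda$ on $(A_h,\circ_h)$ associated to the $A_h$-bimodule algebra $(V_h,\cdot_h,l_h,r_h)$, Theorem~\mref{THTRT} endows $V_h$ with operations $u_h \succ_h v_h := l_h(T_{\mathbf{K}}(u_h))v_h$ and $u_h \prec_h v_h := r_h(T_{\mathbf{K}}(v_h))u_h$, making $(V_h,\succ_h,\prec_h,\lambda\cdot_h)$ a topologically free tridendriform algebra. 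To show this is a deformation of $(V,\succ,\lambda\cdot)$, I would apply Proposition~\mref{edm} to obtain $l_h(x)v \equiv \rho_\circ(x)v \pmod h$ and $r_h(y)u \equiv \rho_\circ(y)u \pmod h$, together with $T_{\mathbf{K}}(u) \equiv T(u) \pmod h$, which gives $u \succ_h v \equiv \rho_\circ(T(u))v = u \succ v \pmod h$ and similarly $u \prec_h v \equiv \rho_\circ(T(v))u = v\succ u \equiv u \prec v \pmod h$, plus the obvious $\lambda u \cdot_h v \equiv \lambda u \cdot v \pmod h$. This confirms the middle column coincidence.

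Next, by Theorem~\mref{mainthm}, the tridendriform deformation $(V_h,\succ_h,\prec_h,\lambda\cdot_h)$ has a post-Poisson algebra as its \qcl, say $(V,[,]',\triangleright',\succ,\lambda\cdot)$, where the new operations are extracted from the first-order terms: $[u,v]' = \lambda(u\cdot_1 v - v\cdot_1 u)$ and $u \triangleright' v = u\succ_1 v - v\prec_1 u$. On the other hand, following the upper-right route through Theorems~\mref{Thmdefmoda} and~\mref{invrb}, $T$ becomes an $\calo$-operator of weight $\lambda$ on $(A,\{,\},\circ)$ associated to $(V,[,],\cdot,\rho_{\{,\}},\rho_\circ)$, and Theorem~\mref{PPRT} outputs the post-Poisson algebra $(V,\lambda[,],\triangleright,\succ,\lambda\cdot)$ with $u \triangleright v = \rho_{\{,\}}(T(u))v$. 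Since $[u,v] = u\cdot_1 v - v\cdot_1 u$ from the definition of the \qcl of the associative deformation $(V_h,\cdot_h)$, we immediately get $[u,v]' = \lambda[u,v]$.

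The remaining key identification is $\triangleright' = \triangleright$. Expanding $l_h = \sum_s l_s h^s$ and $r_h = \sum_s r_s h^s$ and using the defining congruence $\rho_{\{,\}}(x)u \equiv \frac{l_h(x)u - r_h(x)u}{h} \pmod h$ of \meqref{cmod}, I would identify $\rho_{\{,\}}(x)u = l_1(x)u - r_1(x)u$. Substituting $x = T(u)$ into this expression and using that $T_{\mathbf{K}}$ has no higher-order terms beyond $T$ itself yields $u\triangleright' v = l_1(T(u))v - r_1(T(u))v = \rho_{\{,\}}(T(u))v = u \triangleright v$. The bottom-right coincidence of post-Poisson algebras then follows, and together with the middle-column coincidence this gives commutativity of the full diagram. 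The main bookkeeping obstacle will be keeping the scalar deformation $T_{\mathbf{K}}$ straight (it has no positive-order terms), so that expansion coefficients of $u \succ_h v$ and $u \prec_h v$ are simply $l_s(T(u))v$ and $r_s(T(v))u$ rather than involving higher-order pieces of $T$.
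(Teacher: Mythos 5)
Your proposal is correct and follows essentially the same route as the paper: establish the middle-column coincidence via Proposition~\ref{edm} and the congruences $l_h(T_{\mathbf{K}}(u))v\equiv\rho_\circ(T(u))v\pmod h$, then match the two post-Poisson structures by identifying $[,]'=\lambda[,]$ and $\triangleright'=\triangleright$ through the first-order coefficients $l_1, r_1, \cdot_1$ and the defining congruence \eqref{cmod} for $\rho_{\{,\}}$. The paper's own argument (the discussion immediately preceding the proposition) performs exactly these computations, including the observation that $T_{\mathbf{K}}$ restricted to $V$ is just $T$, so no divergence to report.
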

Conversely, from a tridendriform deformation of a commutative tridendriform algebra, we obtain the same $\calo$-operator on a Poisson algebra associated to its module Poisson algebra by two ways as follows.

 Let  $(A,\succ,\cdot)$ be a commutative tridendriform algebra, $(A_{h},\succ_{h},\prec_{h},\cdot_{h})$ be a tridendriform deformation and $(A,[,],\triangleright,\succ,\cdot)$ be the \qcl post-Poisson algebra. Define binary operations $\circ$ and $\{,\}$ on $A$ by
(\ref{eq:summ}). Define a binary operation $\circ_{h}$ on $A_h$ by
(\ref{eq:sumh}). By Propositions~\ref{pro-ct} and~\ref{pro-cp} respectively, we obtain
the following data.
    \begin{itemize}
    \item a commutative associative algebra $(A,\circ)$, an $(A,\circ)$-module algebra $(A,\cdot,L_{\succ})$ and an
    $\mathcal{O}$-operator $\mathrm{id}_{A}$ on $(A,\circ)$ of weight $1$ associated to $(A,\cdot,L_{\succ})$;
    \item a topologically free associative algebra $(A_{h},\circ_{h})$, a topologically free $(A_{h},\circ_{h})$-bimodule algebra $(A_{h},\cdot
    _{h},L_{\succ_{h}},R_{\prec_{h}})$ and an $\calo$-operator $\mathrm{id}_{A_{h}}$ of weight $1$ on $(A_{h},\circ_{h})$ associated to\\ $(A_{h},\cdot
    _{h},L_{\succ_{h}},R_{\prec_{h}})$;
    \item a Poisson algebra $(A,\{,\},\circ)$,
    an $(A,\{,\},\circ)$-module Poisson algebra
    $(A,[,],\cdot,L_{\triangleright},L_{\succ})$ and an $\mathcal{O}$-operator $\mathrm{id}_{A}$ of weight $1$ on $(A,\{,\},\circ)$ associated to $(A,[,],\cdot,L_{\triangleright},L_{\succ})$.
\end{itemize}

\begin{thm}\label{pro-diaid}
Keeping the notations and assumptions as above, we have the following results.
    \begin{enumerate}
        \item  The topologically free $(A_{h},\circ_{h})$-bimodule algebra $(A_{h},\cdot
        _{h},L_{\succ_{h}},R_{\prec_{h}})$ is an $A_h$-bimodule deformation of the $(A,\circ)$-module algebra $(A,\cdot,L_{\succ})$ and the \qcl is the
        $(A,\{,\},\circ)$-module Poisson algebra
        $(A,[,],\cdot,L_{\triangleright},L_{\succ})$.\label{pro-diaid1}
        \item Starting from the commutative tridendriform algebra $(A,\succ,\cdot)$, the $\calo$-operator $\mathrm{id}_{A_{h}}$ on $(A_{h},\circ_{h})$ associated to $(A_{h},\cdot
        _{h},L_{\succ_{h}},R_{\prec_{h}})$ induced by the tridendriform deformation $(A_{h},\succ_{h}$, $\prec_{h},\cdot_{h})$ of $(A,\succ,\cdot)$ coincides with
        the scalar deformation $(\id_A)_{\bf K}$ of the $\mathcal{O}$-operator $\mathrm{id}_{A}$ on $(A,\circ)$ associated to $(A,\cdot,L_{\succ})$ induced by
        $(A,\succ,\cdot)$, that is, $\mathrm{id}_{A_{h}}=(\id_A)_{\bf
        K}$.
 \label{pro-diaid2}
            \item Starting from the topologically tridendriform algebra $(A_{h},\succ_{h},\prec_{h},\cdot_{h})$, the $\mathcal{O}$-operator $\mathrm{id}_{A}$ on $(A,\{,\},\circ)$ associated to $(A,[,],\cdot,L_{\triangleright},L_{\succ})$ induced by the \qcl post-Poisson algebra $(A,[,],\triangleright,\succ,\cdot)$
            of $(A_{h},\succ_{h},\prec_{h},\cdot_{h})$ coincides with the $\mathcal{O}$-operator induced by $\mathrm{id}_{A_{h}}=(\id_A)_{\bf
        K}$ via Theorem~\mref{invrb}.\label{pro-diaid3}
    \end{enumerate}
     Then we have the following commutative diagram.
\vspb
     {\small
            \begin{displaymath}
            \xymatrixcolsep{6pc}\xymatrixrowsep{3pc}  \xymatrix{
                *\txt{\small commutative \\ \small tridendriform algebra\\ \small $(A,\succ,\cdot)$}
                \ar[r]^{\txt{\tiny deformation}}\ar[d]|-{\txt{\tiny Proposition~\mref{pro-ct}}} & *\txt{\small topologically free\\ \small tridendriform algebra\\ \small $(A_{h},\succ_{h},\prec_{h},\cdot_{h})$}\ar[r]^{\txt{\tiny \qcl}}_{\txt{\tiny Theorem~\mref{mainthm}}} \ar[d]|-{\txt{\tiny Proposition~\mref{pro-ct}}}
                & \txt{\small post-Poisson\\ \small algebra\\ \small $(A,[,],\triangleright,\succ,\cdot)$}\ar[d]|-{\txt{\tiny Proposition~\mref{pro-cp}}}\\
                *\txt{\small $\mathcal{O}$-operator $\mathrm{id}_{A}$ on \\ \small $(A,\circ)$ associated to\\ \small $(A,\cdot,L_{\succ})$}
                \ar[r]^{\txt{\tiny deformation}} & *\txt{\small  $\mathcal{O}$-operator $\mathrm{id}_{A_{h}}=(\id_A)_{\bf K}$ \\ \small on $(A_{h},\circ_{h})$ associated to \\ \small  $(A_{h},\cdot
                    _{h},L_{\succ_{h}},R_{\prec_{h}})$}\ar[r]^{\txt{\tiny \qcl}}_{\txt{\tiny Theorems~\mref{invrb},\mref{Thmdefmoda}}}
                & *\txt{\small $\mathcal{O}$-operator $\mathrm{id}_{A}$ on \\$(A,\{,\},\circ)$ associated to \\ \small  $(A,[,],\cdot,L_{\triangleright},L_{\succ})$.}
            }
            \end{displaymath}
    }
In particular, if the multiplications $\cdot, \cdot_h$ and $[\,,\,]$ are zero, then we have 
 \begin{equation}\notag
	\begin{split}
		\xymatrixcolsep{5pc}\xymatrixrowsep{3pc}
		\xymatrix{
			*\txt{\small Zinbiel algebra\\ \small $(A,\succ)$}
			\ar[r]^{\txt{\tiny deformation}}\ar[d]|-{\txt{\tiny
					Proposition~\ref{pro-ct}}}& *\txt{\small topologically free\\
				\small dendriform algebra\\ \small
				$(A_{h},\succ_{h},\prec_{h})$}\ar[r]^{\txt{ \tiny
					\qcl}}_{\txt{\tiny
					Corollary~\mref{cor-defdendri}}}\ar[d]|-{\txt{\tiny
					Proposition~\ref{pro-ct}}}
			& *\txt{\small pre-Poisson algebra\\\small $(A,\triangleright,\succ)$}\ar[d]|-{\txt{\tiny  Proposition~\ref{pro-cp}}}\\
			*\txt{\small $\mathcal{O}$-operator $\mathrm{id}_{A}$ on\\ \small $(A,\circ)$ associated\\ \small to $(A,L_{\succ})$}\ar[r]^{\txt{\tiny  deformation}}
			&*\txt{\small $\mathcal{O}$-operator $\mathrm{id}_{A_h}$ on\\ \small $(A_h,\circ_h)$ associated\\ \small to $(A_h,L_{\succ_h},R_{\prec_h})$}\ar[r]^{\txt{\tiny QCL}}_{\txt{\tiny  Theorem \mref{invrb}
			}}&*\txt{\small $\mathcal{O}$-operator $\mathrm{id}_{A}$ on\\ \small $(A,\{,\},\circ)$ associated\\ \small to $(A,L_{\triangleright},L_{\succ})$}
}
\end{split}
\end{equation}
\end{thm}
\begin{proof}
    (\ref{pro-diaid1}). By (\ref{eq:dt2}), we deduce
\vspc
     \begin{eqnarray*}
        x\cdot_{h}y&\equiv& x\cdot y\mmod{h},\\
        x\circ_{h}y &=&x_{h}\succ_{h} y_{h}+x_{h}\prec_{h}y_{h} + x_{h}\cdot_{h}y_{h} \equiv    x\circ y\mmod{h},\\
    L_{\succ_{h}}(x)y&=& x\succ_{h} y \equiv x\succ y\mmod{h} \equiv    L_{\succ}(x)y\mmod{h},\\
    R_{\prec_{h}}(y)x &=& x\prec_{h}y\equiv x\prec y\mmod{h}\equiv  R_{\prec}(y)x\mmod{h},  x,y\in A.
\vspb
 \end{eqnarray*}
Thus the $(A_{h},\circ_{h})$-bimodule algebra $(A_{h},\cdot
    _{h},L_{\succ_{h}},R_{\prec_{h}})$ is an $A_h$-bimodule deformation of the $(A,\circ)$-module algebra $(A,\cdot,L_{\succ})$ by Proposition~\ref{edm}.

    By (\ref{eq-br}) and (\ref{brak}), the \qcls of $(A_{h},\circ_{h})$ and $(A_{h},\cdot_{h})$ are the Poisson algebras $(A,\{,\},\circ)$ and $(A,[,],\circ)$ respectively. Note that
\vspb
\begin{equation*}
\frac{L_{\succ_{h}}(x)y-R_{\prec_{h}}(x)y}{h}=\frac{x\succ_{h}y-y\prec_{h}x}{h}\equiv x\triangleright y \mmod{h}\equiv L_{\triangleright}(x) y \mmod{h},\ x, y\in A.
\end{equation*}
Thus the conclusion follows.

 Because
\vspb
\[\mathrm{id}_{A_{h}}\Big(\sum_{i=0}^{\infty}x_{i}~h^{i}\Big)=\sum_{i=0}^{\infty}x_{i}~h^{i}=\sum_{i=0}^{\infty}\mathrm{id}_{A}(x_{i})~h^{i}, \quad \sum_{i=0}^\infty x_i~h^i\in A_h,
\vspa
\]
we obtain that $\mathrm{id}_{A_{h}}$ coincides with the scalar
deformation $(\id_A)_{\bf K}$ of $\mathrm{id}_{A}$. Hence
(\ref{pro-diaid2}) and (\ref{pro-diaid3}) hold.
\end{proof}

\begin{rmk}\label{rmk-pro-diaid}
As an
application of Theorem~\ref{pro-diaid}, we give another proof of Theorem~\ref{mainthm} as follows.
    By Theorem~\ref{pro-diaid}~(\ref{pro-diaid1}),  $(A,[,],\cdot,L_{\triangleright},L_{\succ})$ is exactly the \qcl of
    $(A_{h},\cdot_{h},L_{\succ_{h}},R_{\prec_{h}})$. Then
 $(A,[,],\cdot,L_{\triangleright},L_{\succ})$ is an $(A,\{,\},\circ)$-module Poisson algebra by Theorem~\ref{Thmdefmoda}. Hence by Proposition~\mref{pro-cp}, $(A,[,],\triangleright,\succ,\cdot)$ is a post-Poisson algebra.

Also note that Theorem~\ref{pro-diaid} provides the scalarly deformed $\calo$-operator $\mathrm{id}_{A_h}$ as an explicit example to illustrate Theorem~\ref{invrb}.
\end{rmk}
\vspd
\subsection{Constructions of tridendriform deformations by derivations}

We recall a deformation construction for associative algebras induced by derivations.
\begin{lem}{\rm{\mcite{G3}}}
Let $\dera$ and $\derb$ be  commuting derivations on an associative
algebra $(A,\circ)$. Set
\vspb
  \begin{equation}\mlabel{UDF}
    x\circ_{h}y:=\sum_{s=0}^{\infty}(\dera^{s}(x)\circ\derb^{s}(y))\frac{h^{s}}{s!}, \quad  x, y\in A,
\vspb
  \end{equation}
and extend it to $A_{h}$ by $\mathbf{K}$-bilinearity. Then $(A_h,\circ_h)$ is
an associative deformation of $(A,\circ)$. \mlabel{lemG}
\end{lem}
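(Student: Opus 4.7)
The plan is to verify the two defining conditions of an associative deformation. Condition (\mref{defa2}), namely $x\circ_h y \equiv x\circ y \mmod{h}$, is immediate from reading off the $s=0$ term of the defining formula (\mref{UDF}). The substantive point is to prove that $\circ_h$, extended $\mathbf{K}$-bilinearly to $A_h$, is associative; by $\mathbf{K}$-bilinearity it suffices to check associativity on triples $x,y,z\in A$.

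For associativity, I would expand both $(x\circ_h y)\circ_h z$ and $x\circ_h(y\circ_h z)$ using (\mref{UDF}) together with the iterated Leibniz rule $\dera^s(a\circ b) = \sum_{k=0}^s \binom{s}{k}\,\dera^k(a)\circ \dera^{s-k}(b)$ and its analog for $\derb$, each obtained by induction on $s$ from the derivation axiom. Whenever a composition $\dera^p \derb^q$ appears, the hypothesis $[\dera,\derb]=0$ lets me rewrite it as $\derb^q\dera^p$, separating the two derivations. Invoking associativity of the undeformed product $\circ$ to fix a common bracketing, each side becomes a triple series of the shape $\sum_{s,t,k\geq 0,\, k\leq s}\frac{h^{s+t}}{s!\,t!}\binom{s}{k}$ applied to a definite term: for $(x\circ_h y)\circ_h z$ the summand is $\dera^{k+t}(x)\circ\bigl(\derb^t\dera^{s-k}(y)\circ \derb^s(z)\bigr)$, whereas for $x\circ_h(y\circ_h z)$ it is $\dera^s(x)\circ\bigl(\derb^k\dera^t(y)\circ \derb^{s-k+t}(z)\bigr)$.

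I would then unify the two expressions by re-indexing: set $(a,b,c)=(k+t,\,s-k,\,t)$ in the first series and $(a,b,c)=(s,\,t,\,k)$ in the second. Both substitutions recast the index set as $\{(a,b,c)\mid a\geq c\geq 0,\ b\geq 0\}$ and produce the common summand $\frac{h^{a+b}}{c!\,(a-c)!\,b!}\,\dera^a(x)\circ\bigl(\derb^c\dera^b(y)\circ \derb^{a+b-c}(z)\bigr)$, establishing term-by-term equality. Since each coefficient of $h^n$ is a finite sum, the series converges $h$-adically in $A_h$ and the identity holds there. The main obstacle is purely combinatorial: identifying the right change of variables that aligns the two triple series; once that is in place, the matching of multinomial coefficients reduces to routine manipulation of $\binom{s}{k}=\frac{s!}{k!(s-k)!}$, and no structural input beyond the derivation axiom, the commutativity $[\dera,\derb]=0$, and associativity of $\circ$ is needed. (Conceptually, the formula is a Drinfeld-type twist of the undeformed product by $F=\exp(h\dera\otimes\derb)$ inside the cocommutative Hopf algebra $\bfk[\dera,\derb][[h]]$, and the cocycle condition on $F$ reduces to the same commutation of $\dera$ and $\derb$; but the direct calculation above suffices.)
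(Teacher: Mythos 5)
Your proof is correct. The paper itself offers no argument for this lemma --- it is quoted from Gerstenhaber \mcite{G3} --- so there is nothing internal to compare against, but your direct verification is sound: the iterated Leibniz rule, the commutation $\dera^p\derb^q=\derb^q\dera^p$, and the two re-indexings $(a,b,c)=(k+t,\,s-k,\,t)$ and $(a,b,c)=(s,\,t,\,k)$ do both land on the common summand $\frac{h^{a+b}}{c!\,(a-c)!\,b!}\,\dera^a(x)\circ\bigl(\derb^c\dera^b(y)\circ\derb^{a+b-c}(z)\bigr)$ over the index set $\{a\geq c\geq 0,\ b\geq 0\}$, and the reduction to triples in $A$ is legitimate because the $\mathbf{K}$-bilinear extension is $h$-adically continuous and each coefficient of $h^n$ involves only finitely many terms (the division by $s!$ being harmless since $\mathrm{char}\,\bfk=0$).
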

We extend this construction to $A_{h}$-bimodule algebra deformations.
\begin{pro}
 Let $(A,\circ)$ be an associative algebra and $(V,\cdot,l,r)$ an $A$-bimodule algebra.
Let $\dera$ and $\derb$ be commuting derivations on the
associative algebra $A\ltimes_{l,r}V$ such that $d_i(A)\subset A$ and $d_i(V)\subset V$, $i=1,2$. Define
 \begin{align*}
   x\circ_{h}y &:=\sum_{s=0}^{\infty}(\dera^{s}(x)\circ\derb^{s}(y))\frac{h^{s}}{s!}, &u\cdot_{h}v&:=\sum_{s=0}^{\infty}(\dera^{s}(u)\circ\derb^{s}(v))\frac{h^{s}}{s!}, \\
   l_{h}(x)v &:= \sum_{s=0}^{\infty}(l(\dera^{s}(x))\derb^{s}(v))\frac{h^{s}}{s!},
   &r_{h}(x)v &:=\sum_{s=0}^{\infty}(r(\dera^{s}(x))\derb^{s}(v))\frac{h^{s}}{s!}, \quad  x, y \in A, u, v\in V.
 \end{align*}
Extend $\mathbf{K}$-bilinearly the above operations to $A_{h}$ and $V_h$. Then  $(V_{h},\cdot_{h},l_{h},r_{h})$ is an
$A_{h}$-bimodule algebra deformation of $(V,\cdot,l,r)$.
\mlabel{pp:der}
\end{pro}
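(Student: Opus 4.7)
The plan is to reduce the proposition to Lemma \ref{lemG} applied to the semidirect product associative algebra $E := A \ltimes_{l,r} V$ with the commuting derivations $d_1, d_2$, and then observe that the resulting deformation respects the $A \oplus V$ decomposition and therefore has precisely the asserted component formulas.

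First I would apply Lemma \ref{lemG} directly to $(E, \odot)$ with the given commuting derivations $d_1, d_2$. This yields an associative deformation $(E_h, \odot_h)$ defined by
\[
(x,u) \odot_h (y,v) = \sum_{s=0}^{\infty} \bigl(d_1^s(x,u) \odot d_2^s(y,v)\bigr)\frac{h^s}{s!},\quad x,y\in A,\ u,v\in V,
\]
extended $\mathbf{K}$-bilinearly.

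Next I would use the hypothesis $d_i(A) \subset A$ and $d_i(V) \subset V$ to decompose each term. Because $d_i$ preserves both summands, $d_i^s(x,u) = (d_i^s(x), d_i^s(u))$, and by the definition of $\odot$ in the semidirect product (Definition \ref{de:pbimod} or Equation \eqref{eq:pdimod}),
\[
d_1^s(x,u) \odot d_2^s(y,v) = \bigl(d_1^s(x)\circ d_2^s(y),\ l(d_1^s(x))d_2^s(v) + r(d_2^s(y))d_1^s(u) + d_1^s(u)\cdot d_2^s(v)\bigr).
\]
Summing over $s$ with the weight $h^s/s!$ separates into the $A$-component and the $V$-component, matching exactly the definitions of $x \circ_h y$ and of $l_h(x)v + r_h(y)u + u \cdot_h v$ in the statement. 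Thus $(E_h, \odot_h) = A_h \ltimes_{l_h, r_h} V_h$ under the identification of Remark \ref{ids}.

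Finally, to conclude that $(V_h, \cdot_h, l_h, r_h)$ is an $A_h$-bimodule algebra deformation of $(V,\cdot,l,r)$, I would verify the congruences \eqref{e1} and \eqref{e2} of Proposition \ref{edm}. These are immediate from the $s=0$ term: since $d_1^0 = d_2^0 = \id$, the zeroth-order parts of $x \circ_h y$, $u \cdot_h v$, $l_h(x)v$, and $r_h(x)v$ reduce to $x \circ y$, $u \cdot v$, $l(x)v$, and $r(x)v$ respectively. Combined with the associativity of $\odot_h$ established by Lemma \ref{lemG}, this shows $(V_h,\cdot_h,l_h,r_h)$ is a topologically free $A_h$-bimodule algebra and an $A_h$-bimodule algebra deformation of $(V,\cdot,l,r)$, completing the proof.

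There is essentially no hard step: the entire argument is a packaging of Gerstenhaber's derivation-based deformation construction, and the only point requiring attention is the component-wise decomposition, which is guaranteed by the assumption that each $d_i$ preserves the direct summands $A$ and $V$ individually. Without that stability hypothesis, cross terms would appear and the deformed product would fail to respect the semidirect product decomposition.
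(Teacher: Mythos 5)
Your proposal is correct and follows essentially the same route as the paper: both rest on applying Lemma~\ref{lemG} to the semidirect product $A\ltimes_{l,r}V$ with the commuting derivations and using $d_i(A)\subset A$, $d_i(V)\subset V$ to identify the componentwise formulas with the Gerstenhaber product. The only cosmetic difference is direction --- the paper assembles the stated component operations into the formula $\sum_s d_1^s((x,u))\odot d_2^s((y,v))\,h^s/s!$ and then invokes the lemma, whereas you invoke the lemma first and then decompose --- which does not change the substance of the argument.
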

\begin{proof} Obviously, $\dera$ and $\derb$ are commuting
derivations on the subalgebras $(A,\circ)$ and $(V,\cdot)$
respectively. By Lemma~\mref{lemG}, $(A_{h},\circ_{h})$ and
$(V_{h},\cdot_{h})$
    are associative deformations of $(A,\circ)$ and $(V,\cdot)$ respectively.
  We only need to show that $(A_{h}\oplus V_{h},\odot_{h})$ is an associative deformation of $A\ltimes_{l,r}V$.
Note that
\vspb
  \begin{eqnarray*}
 (x,u)\odot_{h}(y,v)
    &=&(x\circ_{h} y,l_{h}(x)v+r_{h}(y)u+u\cdot_{h} v) \\
    & =&\sum_{s=0}^{\infty}(\dera^{s}(x)\circ\derb^{s}(y),l(\dera^{s}(x))\derb^{s}(v)+r(\dera^{s}(y))\derb^{s}(u)+\dera^{s}(u)\circ\derb^{s}(v))\frac{h^{s}}{s!} \\
    & =&
    \sum_{s=0}^{\infty}\dera^{s}((x,u))\odot\derb^{s}((y,v))\frac{h^{s}}{s!},\;\;
    x,y\in A, u,v\in V.
\vspb
  \end{eqnarray*}
By Lemma~\mref{lemG} again,  $(A_{h}\oplus V_{h},\odot_{h})$ is an
associative deformation of $A\ltimes_{l,r}V$. Thus
$(V_{h},\cdot_{h},l_{h},r_{h})$ is an $A_{h}$-bimodule algebra
deformation of $(V,\cdot,l,r)$.
\end{proof}

Let $(A,\succ,\prec,\cdot)$ be a tridendriform algebra. If $d\in
\mathrm{End}_{\bfk}(A)$ satisfies
\vspb
$$d(x\ast y)=d(x)\ast y+x\ast
d(y),\;\; x,y\in A, \ast\in\{\succ,\prec,\cdot\},
\vspb
$$ then $d$ is called a \textbf{derivation on the tridendriform algebra}
$(A,\succ,\prec,\cdot)$. Note that in this case, the operator $d\dotplus d\in \mathrm{End}_{\bfk}(A\oplus A)$ defined by $d\dotplus d((x,y)):=(d(x),d(y)), x,y\in A,$ is a derivation on the associative algebra $A\ltimes_{L_{\succ},R_{\prec}}A$. Thus applying Proposition~\mref{pp:der} to the
$(A,\circ)$-bimodule algebra $(A,\cdot,L_{\succ},R_{\prec})$ gives the following consequence.
\begin{cor}\label{cordt}
If $\dera$ and $\derb$ are  commuting derivations on a tridendriform
algebra $(A,\succ,\prec,\cdot)$, then the operations
\vspc
  \begin{equation}
    x\ast_{h}y:=\sum_{s=0}^{\infty}(\dera^{s}(x)\ast\derb^{s}(y))\frac{h^{s}}{s!}, \quad  x, y\in A,
\vspa
  \end{equation}
for each $\ast\in\{\succ,\prec,\cdot\}$, give a tridendriform
deformation $(A_{h},\succ_{h},\prec_{h},\cdot_{h})$ of
$(A,\succ,\prec,\cdot)$.
\vspb
\end{cor}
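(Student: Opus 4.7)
The plan is to deduce the corollary by combining the bimodule algebra characterization of tridendriform algebras (Proposition~\mref{pro-trieqv}) with the bimodule algebra deformation construction via commuting derivations (Proposition~\mref{pp:der}). Thus I would not attempt to verify the tridendriform axioms \meqref{Tri1}--\meqref{Tri7} directly for $(A_h,\succ_h,\prec_h,\cdot_h)$, which would involve a formidable triple sum manipulation.

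First, I would translate the hypothesis. By Proposition~\mref{pro-trieqv}, $(A,\succ,\prec,\cdot)$ being a tridendriform algebra is equivalent to $(A,\circ)$ being an associative algebra (with $\circ=\succ+\prec+\cdot$) together with $(A,\cdot,L_\succ,R_\prec)$ being an $(A,\circ)$-bimodule algebra. I would then regard the semidirect product $A\ltimes_{L_\succ,R_\prec}A$ as the ambient associative algebra and check that the linear maps $\widetilde{d}_i:A\oplus A\to A\oplus A$ defined componentwise by $\widetilde{d}_i(x,u):=(d_i(x),d_i(u))$ are commuting derivations on $A\ltimes_{L_\succ,R_\prec}A$. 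Commutativity is immediate from commutativity of $d_1,d_2$ on $A$; the derivation property reduces to the identities $d_i(x\succ v)=d_i(x)\succ v+x\succ d_i(v)$ and $d_i(u\prec y)=d_i(u)\prec y+u\prec d_i(y)$ together with the corresponding identities for $\circ$ and $\cdot$, all of which hold because $d_i$ is a derivation on the tridendriform algebra. Clearly $\widetilde{d}_i$ preserves each summand.

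Next, I would apply Proposition~\mref{pp:der} with $V=A$, $l=L_\succ$, $r=R_\prec$ and derivations $\widetilde{d}_1,\widetilde{d}_2$. This yields an $A_h$-bimodule algebra deformation $(A_h,\cdot_h,l_h,r_h)$ of $(A,\cdot,L_\succ,R_\prec)$ whose operations are
\[
x\circ_h y=\sum_{s\ge 0}(d_1^s(x)\circ d_2^s(y))\tfrac{h^s}{s!},\ \ u\cdot_h v=\sum_{s\ge 0}(d_1^s(u)\cdot d_2^s(v))\tfrac{h^s}{s!},
\]
and, crucially,
\[
l_h(x)v=\sum_{s\ge 0}(d_1^s(x)\succ d_2^s(v))\tfrac{h^s}{s!}=x\succ_h v,\quad r_h(y)u=\sum_{s\ge 0}(d_1^s(u)\prec d_2^s(y))\tfrac{h^s}{s!}=u\prec_h y.
\]
In particular $l_h=L_{\succ_h}$ and $r_h=R_{\prec_h}$, and one verifies the sum identity $x\circ_h y=x\succ_h y+x\prec_h y+x\cdot_h y$ by combining the three series termwise using the additivity $\circ=\succ+\prec+\cdot$.

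Finally, I would apply Proposition~\mref{pro-trieqv} in reverse to the resulting data: the topologically free associative algebra $(A_h,\circ_h)$ together with the topologically free $(A_h,\circ_h)$-bimodule algebra $(A_h,\cdot_h,L_{\succ_h},R_{\prec_h})$ produces a topologically free tridendriform algebra $(A_h,\succ_h,\prec_h,\cdot_h)$. Reduction modulo $h$ of each series gives back $x\succ y,\ x\prec y,\ x\cdot y$ (the $s=0$ terms), so conditions~\meqref{eq:dt2} are satisfied and the tridendriform deformation is obtained. The main obstacle, and really the only nontrivial verification, is the derivation check for $\widetilde{d}_i$ on the semidirect product; once this is in place, the rest is a routine assembly of Propositions~\mref{pro-trieqv} and~\mref{pp:der}.
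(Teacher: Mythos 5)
Your proposal is correct and is essentially the paper's own argument: the paper derives the corollary in one line by "applying Proposition~\mref{pp:der} to the $(A,\circ)$-bimodule algebra $(A,\cdot,L_{\succ},R_{\prec})$," and your write-up simply fills in the details of that application (the derivation check on the semidirect product, the identifications $l_h=L_{\succ_h}$, $r_h=R_{\prec_h}$, and the reverse use of Proposition~\mref{pro-trieqv} in its topologically free form). No gap.
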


\begin{pro}
With the assumptions in Proposition\,\mref{pp:der}. Let $T$ be an
$\mathcal{O}$-operator of weight $\lambda$ on $(A,\circ)$
associated to $(V,\cdot,l,r)$. If $T\dera|_{V}=\dera T$ and
$T\derb|_{V}=\derb T$, then $T_\mathbf{K}$ defined by (\ref{eq:TK}) is
an $\mathcal{O}$-operator of weight $\lambda$ on
$(A_{h},\circ_{h})$ associated to $(V_{h},\cdot_{h},l_{h},r_{h})$.
\mlabel{pp:dero}
\end{pro}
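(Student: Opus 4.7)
The plan is to verify the defining identity~\eqref{eq:O-def} for $T_{\mathbf K}$ by comparing the coefficients in the $h$-adic power series expansions of both sides. Since $T_{\mathbf K}$, $\circ_h$, $\cdot_h$, $l_h$ and $r_h$ are all $\mathbf K$-(bi)linear, it suffices by continuity to check the identity for arguments $u,v\in V\subset V_h$. A preliminary observation, proved by a straightforward induction on $s$ from the hypothesis $Td_i|_V=d_iT$, is that
\[
T(d_1^s u)=d_1^s T(u),\qquad T(d_2^s v)=d_2^s T(v),\quad u,v\in V,\ s\in\mathbb N.
\]

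Using the definitions in Proposition~\ref{pp:der} together with this commutation, I would expand the left-hand side of~\eqref{eq:O-def} as
\[
T_{\mathbf K}(u)\circ_h T_{\mathbf K}(v)=T(u)\circ_h T(v)=\sum_{s=0}^{\infty}\bigl(d_1^s T(u)\circ d_2^s T(v)\bigr)\frac{h^s}{s!}=\sum_{s=0}^{\infty}\bigl(T(d_1^s u)\circ T(d_2^s v)\bigr)\frac{h^s}{s!}.
\]
Applying the $\mathcal O$-operator identity~\eqref{ARRBO} for $T$ on $(A,\circ)$ to each term $T(d_1^s u)\circ T(d_2^s v)$ rewrites the coefficient of $h^s/s!$ as
\[
T\bigl(l(T(d_1^s u))d_2^s v\bigr)+T\bigl(r(T(d_2^s v))d_1^s u\bigr)+\lambda\, T\bigl(d_1^s u\cdot d_2^s v\bigr).
\]

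On the right-hand side of~\eqref{eq:O-def}, again using $Td_i^s=d_i^s T$ on $V$, the definitions of $l_h,r_h,\cdot_h$ and the $\mathbf K$-linearity of $T_{\mathbf K}$ give
\begin{align*}
T_{\mathbf K}\bigl(l_h(T_{\mathbf K}(u))v\bigr)&=\sum_{s=0}^{\infty}T\bigl(l(T(d_1^s u))d_2^s v\bigr)\frac{h^s}{s!},\\
T_{\mathbf K}\bigl(r_h(T_{\mathbf K}(v))u\bigr)&=\sum_{s=0}^{\infty}T\bigl(r(T(d_2^s v))d_1^s u\bigr)\frac{h^s}{s!},\\
\lambda\, T_{\mathbf K}(u\cdot_h v)&=\sum_{s=0}^{\infty}\lambda\, T\bigl(d_1^s u\cdot d_2^s v\bigr)\frac{h^s}{s!}.
\end{align*}
Summing these three series yields exactly the expression obtained above for $T_{\mathbf K}(u)\circ_h T_{\mathbf K}(v)$, so~\eqref{eq:O-def} holds for $u,v\in V$, and hence for all of $V_h$ by $\mathbf K$-bilinear extension.

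There is no real obstacle here; the only subtle point is the intertwining $Td_i^s=d_i^s T$, which has to be invoked \emph{twice}---once to rewrite the coefficients of $T(u)\circ_h T(v)$ so that the $\mathcal O$-operator identity~\eqref{ARRBO} can be applied term by term, and once to push $T_{\mathbf K}$ past the operators $l_h$, $r_h$ on the right-hand side. After that, the proof is a bookkeeping match of coefficients of $h^s/s!$.
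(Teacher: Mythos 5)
Your proposal is correct and follows essentially the same route as the paper: both arguments reduce the identity for $T_{\mathbf K}$ to the coefficientwise identities \eqref{infirrbo}, which are verified by commuting $T$ past $\dera^s,\derb^s$ and then applying the $\mathcal O$-operator identity \eqref{ARRBO} for $T$ to the arguments $\dera^s(u),\derb^s(v)$. The paper phrases this directly at the level of the coefficients $\circ_s,\cdot_s,l_s,r_s$ while you expand the full $h$-series and match terms, but the content is identical.
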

\begin{proof}Let $u,v\in V$. Then for $s\in \mathbb{N}$, we have
\vspa
{\small  \begin{eqnarray*}
    T(u)\circ_{s}T(v)&=& \frac{\dera^{s}(T(u))\circ\derb^{s}(T(v))}{s!} \\
    &=& \frac{T(\dera^{s}(u))\circ T(\derb^{s}(v))}{s!} \\
    &=& \frac{T(l(T\dera^{s}(u))\derb^{s}(v))}{s!}+\frac{T(r(T\derb^{s}(v))\dera^{s}(u))}{s!}+\lambda\frac{T(\dera^{s}(u)\cdot\derb^{s}(v))}{s!} \\
    &=& \frac{T(l(\dera^{s}(T(u))\derb^{s}(v))}{s!}+\frac{T(r(\derb^{s}(T(v)))\dera^{s}(u))}{s!}+\lambda\frac{T(\dera^{s}(u)\cdot\derb^{s}(v))}{s!}  \\
    &=& T(l_{s}(T(u))v)+T(r_{s}(T(v)))+\lambda T(u\cdot_{s}v).
  \end{eqnarray*}
}
 Therefore $T_\mathbf{K}$ is an $\mathcal{O}$-operator of weight $\lambda$ on $(A_{h},\circ_{h})$ associated to $(V_{h},\cdot_{h},l_{h},r_{h})$.
\end{proof}

\begin{pro}
  Let $\dera$ and $\derb$ be  commuting derivations on an associative algebra $(A,\cdot)$ and $T$ be a Rota-Baxter operator of weight $1$ on $(A,\cdot)$ that commutes with $\dera$ and $\derb$. Define
\vspb
  \begin{eqnarray*}
    &x\succ y := T(x)\cdot y, \quad x\prec y := x\cdot T(y), &\\
   & x\succ_{h} y := \sum_{s=0}^{\infty}(\dera^{s}(T(x))\cdot\derb^{s}(y))\frac{h^{s}}{s!}, \quad
    x\prec_{h} y := \sum_{s=0}^{\infty}(\dera^{s}(x)\cdot\derb^{s}(T(y)))\frac{h^{s}}{s!}, &\\
    &x\cdot_{h}y =\sum_{s=0}^{\infty}(\dera^{s}(x)\cdot\derb^{s}(y))\frac{h^{s}}{s!}, \quad x, y\in A.&
  \end{eqnarray*}
Extend $\mathbf{K}$-bilinearly the above operations $\succ_{h}$, $\prec_{h}$ and $\cdot_{h}$ to
  $A_{h}$. Then $(A_{h},\succ_{h},\prec_{h},\cdot_{h})$ is a tridendriform
deformation of the tridendriform algebra $(A,\succ,\prec,\cdot)$.
If in addition $(A,\cdot)$ is commutative, then the \qcl of the tridendriform deformation is the post-Poisson algebra $(A,\triangleright,[,],\succ,\cdot)$, where $\triangleright$ and $[,]$ are respectively defined by
\vspb
  \[x\triangleright y:= \dera(T(x))\cdot\derb(y)- \dera(y)\cdot\derb(T(x)),\quad
  [x,y]:= \dera(x)\cdot\derb(y)-\dera(y)\cdot\derb(x),\;\; x,y\in A.
 \vspb
 \]
  \mlabel{pp:derp}
\vspb
\end{pro}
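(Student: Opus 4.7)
The plan is to reduce the claim to the Corollary following Proposition \ref{pp:der} by first endowing $A$ with a tridendriform structure for which $d_1$ and $d_2$ are derivations, and then to extract the \qcl using the characterizations already established. I begin by recalling that a Rota-Baxter operator $T$ of weight $1$ on $(A,\cdot)$ is nothing but an $\mathcal{O}$-operator of weight $1$ on the associative algebra $(A,\cdot)$ associated to its regular bimodule algebra $(A,\cdot,L_\cdot,R_\cdot)$. Theorem \ref{THTRT} then furnishes the tridendriform algebra $(A,\succ,\prec,\cdot)$ with precisely the operations $x\succ y=T(x)\cdot y$ and $x\prec y=x\cdot T(y)$ appearing in the statement.

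The key verification, which I expect to be the main obstacle, is that $d_1$ and $d_2$ are commuting derivations on this tridendriform algebra; this is exactly where the hypothesis $Td_i=d_iT$ enters. For instance,
\begin{equation*}
d_i(x\succ y)=d_i(T(x))\cdot y+T(x)\cdot d_i(y)=T(d_i(x))\cdot y+T(x)\cdot d_i(y)=d_i(x)\succ y+x\succ d_i(y),
\end{equation*}
and the Leibniz rules for $\prec$ and $\cdot$ are analogous. Once this is established, the Corollary to Proposition \ref{pp:der} applied to $(A,\succ,\prec,\cdot)$ produces a tridendriform deformation $(A_h,\succ_h,\prec_h,\cdot_h)$ via $x\ast_h y=\sum_{s\geq 0}(d_1^s(x)\ast d_2^s(y))h^s/s!$ for $\ast\in\{\succ,\prec,\cdot\}$. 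Substituting the formulas for $\succ$ and $\prec$ and moving $T$ past $d_1^s$ or $d_2^s$ using $Td_i=d_iT$ recovers exactly the operations stated in the proposition.

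For the second part, the commutativity of $(A,\cdot)$ gives $x\succ y=T(x)\cdot y=y\cdot T(x)=y\prec x$, so $(A,\succ,\cdot)$ is a commutative tridendriform algebra and Theorem \ref{mainthm} guarantees that the \qcl is a post-Poisson algebra. It remains to compute $\{,\}$ and $\triangleright$ via \eqref{brak}. Commutativity of $\cdot$ makes the $s=0$ contributions in both $x\cdot_h y-y\cdot_h x$ and $x\succ_h y-y\prec_h x$ vanish, so both series start at order $h$, and reading off the coefficient of $h$ yields
\begin{equation*}
\{x,y\}=d_1(x)\cdot d_2(y)-d_1(y)\cdot d_2(x),\qquad x\triangleright y=d_1(T(x))\cdot d_2(y)-d_1(y)\cdot d_2(T(x)),
\end{equation*}
as required.
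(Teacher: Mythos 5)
Your proof is correct, but it routes through the paper's machinery in a different order from the paper's own (very short) proof. The paper deforms first and splits second: it applies Proposition~\ref{pp:dero} to $T$ viewed as an $\mathcal O$-operator of weight $1$ on $(A,\cdot)$ associated to the regular bimodule algebra $(A,\cdot,L_\cdot,R_\cdot)$, concludes that $T_{\bf K}$ is a Rota-Baxter operator of weight $1$ on the deformed algebra $(A_h,\cdot_h)$, obtains $\succ_h,\prec_h$ by splitting via Theorem~\ref{THTRT} at the deformed level, and then notes that the congruences \eqref{eq:dt2} make this a tridendriform deformation. You split first and deform second: Theorem~\ref{THTRT} gives $(A,\succ,\prec,\cdot)$ on the undeformed level, you verify that $\dera$ and $\derb$ are (commuting) derivations of this tridendriform structure --- which is exactly where the hypothesis $T\dera=\dera T$, $T\derb=\derb T$ enters for you --- and then the unlabeled corollary following Proposition~\ref{pp:der} produces the deformation, with the stated formulas recovered by commuting $T$ past $\dera^{s}$ and $\derb^{s}$. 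Both arguments ultimately rest on Lemma~\ref{lemG}; the paper's ordering keeps the scalar deformation $T_{\bf K}$ of the $\mathcal O$-operator in the foreground (which is what its later commutative diagrams need), while yours makes the role of the commutation hypothesis more transparent and replaces the computation of Proposition~\ref{pp:dero} by the one-line Leibniz check. Your treatment of the second assertion --- commutativity of $\cdot$ kills the order-zero terms of $x\cdot_h y-y\cdot_h x$ and $x\succ_h y-y\prec_h x$, and the coefficient of $h$ read off via \eqref{brak} gives the stated $\{,\}$ and $\triangleright$ --- is precisely the ``direct verification'' the paper leaves to the reader, and is carried out correctly.
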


\begin{proof}
Applying Proposition~\ref{pp:dero} to the $\calo$-operator $T$ of weight $1$ on $(A,\cdot)$ associated to the $A$-bimodule algebra $(A,\cdot,L_\cdot, R_\cdot)$, we find that $T_{\bf K}$ is an Rota-Baxter operator of weight $1$ on $(A_{h},\cdot_{h})$. Note that (\ref{eq:dt2}) also holds. Thus $(A_{h},\succ_{h},\prec_{h},\cdot_{h})$ is a tridendriform
deformation of $(A,\succ,\prec,\cdot)$. The last conclusion follows from a direct verification.
\end{proof}

We apply the above construction to give an example of the tridendriform deformation.

Let $(A,\cdot)$ be the
(nonunital) subalgebra of $\bfk[x_{1},x_{2}]$ generated by
$x_{1}$ and $x_{2}$, that is,
$A=x_{1}\bfk[x_{1},x_{2}]+x_{2}\bfk[x_{1},x_{2}]$. It is known~\mcite{Gu} that, if $q\in \bfk$ is not a root of unit, then a Rota-Baxter operator of weight $1$ on the algebra $x_{1}\bfk[x_{1}]$ is given by
\vspb
\[T(x_{1}^{i_{1}})=\frac{q^{i_{1}}x_{1}^{i_{1}}}{1-q^{i_{1}}}, \quad i_{1}\in \mathbb{Z}^{+}.
\vspb
\]

Let $q_1, q_2\in \bfk$ be such that
$q_{1}^{i_{1}}q_{2}^{i_{2}}\neq1$ for $i_{1}, i_{2}\geq 0$
with $i_1i_2\neq 0$. Define a linear map
\vspb
$$T_{q_1,q_2}:A\rightarrow A,\quad  T_{q_1,q_2}(x_{1}^{i_{1}}x_{2}^{i_{2}}) := \frac{q_{1}^{i_{1}}q_{2}^{i_{2}}}{1-q_{1}^{i_{1}}q_{2}^{i_{2}}}x_{1}^{i_{1}}x_{2}^{i_{2}},\quad i_{1},i_{2}\in \mathbb{N}, i_{1}i_{2}\neq0.
\vspb
$$
Noting that
{\small
\begin{equation*}
    \frac{q_{1}^{i_{1}+j_{1}}q_{2}^{i_{2}+j_{2}}}{(1-q_{1}^{i_{1}}q_{2}^{i_{2}})(1-q_{1}^{j_{1}}q_{2}^{j_{2}})}=\frac{q_{1}^{i_{1}}q_{2}^{i_{2}}}{1-q_{1}^{i_{1}}q_{2}^{i_{2}}}\frac{q_{1}^{i_{1}+j_{1}}q_{2}^{i_{2}+j_{2}}}{1-q_{1}^{i_{1}+j_{1}}q_{2}^{i_{2}+j_{2}}}+\frac{q_{1}^{j_{1}}q_{2}^{j_{2}}}{1-q_{1}^{j_{1}}q_{2}^{j_{2}}}\frac{q_{1}^{i_{1}+j_{1}}q_{2}^{i_{2}+j_{2}}}{1-q_{1}^{i_{1}+j_{1}}q_{2}^{i_{2}+j_{2}}}+\frac{q_{1}^{i_{1}+j_{1}}q_{2}^{i_{2}+j_{2}}}{1-q_{1}^{i_{1}+j_{1}}q_{2}^{i_{2}+j_{2}}},
\end{equation*}
}
 where $i_{1},i_{2},j_{1},j_{2}\in \mathbb{N}$, $i_{1}i_{2}\neq0$, $j_{1}j_{2}\neq0$, we deduce that
\vspb
 \begin{eqnarray*}
    &&T_{q_1,q_2}(x_{1}^{i_{1}}x_{2}^{i_{2}})\cdot T_{q_1,q_2}(x_{1}^{j_{1}}x_{2}^{j_{2}})\\
    &\;&=T_{q_1,q_2}(T_{q_1,q_2}(x_{1}^{i_{1}}x_{2}^{i_{2}})\cdot x_{1}^{j_{1}}x_{2}^{j_{2}})+T_{q_1,q_2}(x_{1}^{i_{1}}x_{2}^{i_{2}}\cdot T_{q_1,q_2}(x_{1}^{j_{1}}x_{2}^{j_{2}}))+T_{q_1,q_2}(x_{1}^{i_{1}}x_{2}^{i_{2}}\cdot x_{1}^{j_{1}}x_{2}^{j_{2}}).
    \end{eqnarray*}
Hence $T_{q_1,q_2}$ is a Rota-Baxter operator of weight 1 on $A$.
 We also set
\vspb
\[\dera:=x_{1}\frac{\partial}{\partial x_{1}},\quad  \derb:=x_{2}\frac{\partial}{\partial x_{2}}.
\vspb
\]
Then $T_{q_1,q_2}, \dera$ and $\derb$ are pairwise commuting. Therefore we obtain the following example.
\vspb
\begin{ex}\label{ex-nonuni}
  Let $A=x_{1}\bfk[x_{1},x_{2}]+x_{2}\bfk[x_{1},x_{2}]$. Fix $q_1, q_2\in \bfk$ such that $q_{1}^{i_{1}}q_{2}^{i_{2}}\neq1$ for $i_{1}, i_{2}\geq 0$ with $i_1i_2\neq 0$. A commutative tridendriform algebra structure $(A,\succ,\cdot)$ is defined by
\vspb
{\small  \begin{eqnarray}
     x_{1}^{i_{1}}x_{2}^{i_{2}}\succ x_{1}^{j_{1}}x_{2}^{j_{2}}&:=& \frac{q_{1}^{i_{1}}q_{2}^{i_{2}}}{1-q_{1}^{i_{1}}q_{2}^{i_{2}}}x_{1}^{i_{1}+j_{1}}x_{2}^{i_{2}+j_{2}}, \mlabel{2,v1}\\
    x_{1}^{i_{1}}x_{2}^{i_{2}}\cdot x_{1}^{j_{1}}x_{2}^{j_{2}}&:=& x_{1}^{i_{1}+j_{1}}x_{2}^{i_{2}+j_{2}}\mlabel{2,v2},
    \ \   i_{1},i_{2},j_{1},j_{2}\in \mathbb{N}, i_{1}i_{2}\neq0, j_{1}j_{2}\neq0.
  \end{eqnarray}
}
  A tridendriform deformation $(A,\succ_{h},\prec_{h},\cdot_{h})$ of $(A,\succ,\cdot)$ is given by
\vspb
{\small
  \begin{eqnarray*}
&     x_{1}^{i_{1}}x_{2}^{i_{2}}\succ_{h} x_{1}^{j_{1}}x_{2}^{j_{2}}:= \sum_{s=0}^{\infty}\frac{i_{1}^{s}j_{2}^{s}q_{1}^{i_{1}}q_{2}^{i_{2}}}{1-q_{1}^{i_{1}}q_{2}^{i_{2}}}x_{1}^{i_{1}+j_{1}}x_{2}^{i_{2}+j_{2}}\frac{h^{s}}{s!},\mlabel{i1} \quad
    x_{1}^{i_{1}}x_{2}^{i_{2}}\prec_{h} x_{1}^{j_{1}}x_{2}^{j_{2}}:= \sum_{s=0}^{\infty}\frac{i_{1}^{s}j_{2}^{s}q_{1}^{j_{1}}q_{2}^{j_{2}}}{1-q_{1}^{j_{1}}q_{2}^{j_{2}}}x_{1}^{i_{1}+j_{1}}x_{2}^{i_{2}+j_{2}}\frac{h^{s}}{s!},\mlabel{i2} &\\
&x_{1}^{i_{1}}x_{2}^{i_{2}}\cdot_{h} x_{1}^{j_{1}}x_{2}^{j_{2}}:= \sum_{s=0}^{\infty}i_{1}^{s}j_{2}^{s}x_{1}^{i_{1}+j_{1}}x_{2}^{i_{2}+j_{2}}\frac{h^{s}}{s!}\mlabel{i3}.&
\end{eqnarray*}
}
The post-Poisson algebra $(A,[,],\triangleright,\succ,\cdot)$ for the corresponding \qcl is given by (\mref{2,v1}), (\mref{2,v2}) and
\vspb
{\small $$   \ \big[x_{1}^{i_{1}}x_{2}^{i_{2}}, x_{1}^{j_{1}}x_{2}^{j_{2}}\big]:=
   (i_{1}j_{2}-i_{2}j_{1})x_{1}^{i_{1}+j_{1}}x_{2}^{i_{2}+j_{2}},\ \
     x_{1}^{i_{1}}x_{2}^{i_{2}}\triangleright x_{1}^{j_{1}}x_{2}^{j_{2}}:=                    (i_{1}j_{2}-i_{2}j_{1})\frac{q_{1}^{i_{1}}q_{2}^{i_{2}}}{1-q_{1}^{i_{1}}q_{2}^{i_{2}}}x_{1}^{i_{1}+j_{1}}x_{2}^{i_{2}+j_{2}}.
$$
}
\vspb
\end{ex}
\vspe

 \section{Solutions of PYBE from scalar deformations of solutions of AYBE}
\mlabel{s:ybe}
In this section, we construct solutions of the Poisson Yang-Baxter equation (PYBE) by our general method of bimodule algebra deformations.
In Section~\mref{sub-triaybe}, let $(A,\circ)$ be a commutative associative algebra whose associative deformation
 $(A_{h},\circ_{h})$ has it \qcl Poisson
 algebra $(A,\{,\},\circ)$.
For a solution of the AYBE in $(A,\circ)$, we show that its
scalarly deformed solution of the AYBE in $(A_{h},\circ_{h})$ with
invariant symmetric part induces a solution of the PYBE in
$(A,\{,\},\circ)$. Some notational preparations on topological
dual bimodules are given in Section~\mref{tdb}. Then in
Section~\mref{css}, we characterize scalarly deformed solutions of
the AYBE by scalarly deformed $\calo$-operators for deformed
associative algebras which, under taking the \qcls, agrees with
the characterization of solutions of the PYBE by $\calo$-operators
for Poisson algebras. Finally in Sections~\mref{ss:ybeopdend} and
\mref{ss:ybetrid}, we obtain solutions of the PYBE from dendriform
deformations of Zinbiel algebras and tridendriform deformations of
commutative tridendriform algebras respectively.

\subsection{Scalar deformations of solutions of AYBE  in associative deformations}\mlabel{sub-triaybe}
We recall the notions of associative Yang-Baxter equations.

\begin{defi}\rm{\mcite{BGN2}}
    Let $(A,\circ)$ be an associative algebra and $r=\sum_{i}a_{i}\otimes b_{i}\in A\otimes A$.
Set
\vspb
\begin{equation}\mlabel{mul-1}
  r_{12}\circ r_{13}:=\sum_{i,j}a_{i}\circ a_{j}\ot b_{i}\otimes b_{j},\
  r_{13}\circ r_{23}:=\sum_{i,j}a_{i}\otimes  a_{j}\otimes b_{i}\circ b_{j},\
  r_{23}\circ r_{12}:=\sum_{i,j}a_{j}\otimes a_{i}\circ b_{j}\otimes b_{i}.
\vspb
\end{equation}
We say that $r$ is a solution of the
\textbf{associative Yang-Baxter equation (AYBE)} in $(A,\circ)$ if
\vspb
\begin{equation}\mlabel{aybe}
\mathbf{A}(r):=r_{12}\circ r_{13}+r_{13}\circ r_{23}-r_{23}\circ r_{12}=0.
\vspb
\end{equation}
\end{defi}
\begin{rmk}\label{rop}
In~\mcite{A1}, another form of the AYBE is defined by
\vspb
     \begin{equation}\mlabel{oaybe}
       \mathbf{A}^{\rm op}(r):=r_{13}\circ r_{12}+r_{23}\circ r_{13}-r_{12}\circ r_{23}=0,
\vspb
     \end{equation}
  where
\vspb
  \begin{equation}\mlabel{mul-2}
  r_{13}\circ r_{12}:=\sum_{i,j}a_{i}\circ a_{j}\ot b_{j}\otimes a_{j},\
  r_{23}\circ r_{13}:=\sum_{i,j} a_{j}\otimes a_{i} \ot b_{i}\circ b_{j},\
  r_{12}\circ r_{23}:=\sum_{i,j}a_{i}\otimes b_{i}\circ a_{j}\otimes b_{j}.
\vspb
  \end{equation}
By \cite[Lemma 3.4]{A3}, when the symmetric
part
\vspb
\begin{equation}
\beta:=\frac{1}{2}(r+\sigma(r))
\mlabel{eq:symm}
\vspb
\end{equation}
of $r$ is $(A,\circ)$-\textbf{invariant}, that is,
\vspb
\begin{equation}
(\mathrm{id}_{A}\otimes L_{\circ}(x)-R_{\circ}(x)\otimes
\mathrm{id}_{A})\beta=0, \quad  x\in A,
\mlabel{eq:inv}
\vspb
\end{equation}
then
     $\mathbf{A}^{\rm op}(r)=0$ if and only if $\mathbf{A}(r)=0$. Here $\sigma$ is the \textbf{twisting operator}
\vspb
$$\sigma:A\ot A\to A\ot A, \quad \sigma (x\otimes y)=y\otimes x, \quad x,y\in A.
\vspb
$$
\vspd
\end{rmk}
\vspb

The AYBE can be defined on a topologically free associative algebra by replacing the usual tensor product in the AYBE with the topological tensor product. See~\mcite{HBG,Ka} for details. In fact, let $V$ and $W$ be vector spaces and $V_{h}=V[[h]]$ and $W_{h}=W[[h]]$ be topologically free $\mathbf{K}$-modules. Then $V_{h}\hat{\otimes}W_{h}\cong (V\otimes W)[[h]]$.

Let $(A,\circ)$ be an associative algebra and $(A_h,\circ_h)$ be an associative deformation. Any element
$r=\sum_{i}a_{i}\otimes b_{i}\in A\otimes A$ is naturally an
element in $(A\otimes A)[[h]]$. Then under the isomorphism
$(A\otimes A)[[h]]\cong A_h\hat{\ot}A_h$, $r$ is also naturally an
element in $A_h\hat{\ot}A_h$ and so takes the form
$\sum_{i}a_{i}\hat{\ot} b_{i}$. We let $\hat r$ denote the image
of $r\in A\ot A$ under the linear embedding of $A\otimes A$ into
$A_{h}\hat{\otimes}A_{h}$ and call $\hat r$ the \textbf{scalar
deformation} of $r$.

We can also extend the twisting operator $\sigma:A\ot A\to A\ot A$ to the topological tensor product by
$\bf K$-bilinear extension to get the \textbf{topological twisting operator}
\vspb
$$\sigma_{\bf K}:A_h\hat{\ot} A_h\to A_h\hat{\ot} A_h, \quad \sigma_{\bf K}(x_h{\hat \otimes} y_h)=y_h{\hat \otimes} x_h,\;\; x_h,y_h\in A_h.
\vspb
$$

Let $(\mathfrak{g},[,])$ be a Lie algebra and $r=\sum_{i}y_{i}\otimes y_{i}^{\prime}\in \mathfrak{g}\otimes\mathfrak{g}$. Set
\vspb
 \begin{equation}
  [r_{12},r_{13}]:=\sum_{i,j}[y_{i},y_{j}]\otimes y_{i}^{\prime}\otimes y_{j}^{\prime},\ [r_{12},r_{23}]:=\sum_{i,j}y_{i}\otimes [y_{i}^{\prime}, y_{j}]\otimes y_{j}^{\prime},\
  [r_{13},r_{23}]:=\sum_{i,j}y_{i}\otimes  y_{j}\otimes[y_{i}^{\prime},y_{j}^{\prime}].
\vspc
\end{equation}
Recall that the equation
\vspb
 \begin{equation}\mlabel{cybe}
   \mathbf{C}(r):=[r_{12},r_{13}]+[r_{12},r_{23}]+[r_{13},r_{23}]=0
\vspb
 \end{equation}
is called the \textbf{classical Yang-Baxter equation (CYBE)}. For the adjoint representation of $\frakg$:
\vspb
$$\rad:\frak g\rightarrow {\rm End}_{\bf k}(\frak g), \quad \rad(x)(y)=[x,y], \quad x,y\in \frak g,
\vspb
$$
an element $r\in \mathfrak{g}\otimes\mathfrak{g}$ is called $(\mathfrak{g},[,])$-\textbf{invariant} if
\vspb
$$(\rad(x)\otimes \mathrm{id}+\mathrm{id}\otimes \rad(x))r=0, \quad x\in\mathfrak{g}.
\vspb
$$

 \begin{defi}\rm{\mcite{NB}}
   Let $(A,\{,\},\circ)$ be a Poisson algebra and $r\in A\otimes A$. The equation
\vspb
   \begin{equation}\mlabel{pybe}
     \mathbf{A}(r)=\mathbf{C}(r)=0
\vspb
   \end{equation}
is called the \textbf{Poisson Yang-Baxter equation (PYBE)} in $(A,\{,\},\circ)$.
\vspb
\end{defi}

\begin{thm}
   Let $(A,\circ)$ be a commutative associative algebra, $(A_{h},\circ_{h})$ be an associative deformation and $(A,\{,\},\circ)$ be the corresponding \qcl.
  Let $r=\sum_{i}a_{i}\otimes b_{i}\in A\otimes A$ be a solution of the AYBE in $(A,\circ)$ and $r+\sigma(r)$ be $(A,\circ)$-invariant. Let $\hr$ denote the image of $r$ under the linear embedding of $A\otimes A$ into $A_{h}\hat{\otimes}A_{h}$. Suppose that $\hr$ is a solution of the AYBE in $(A_{h},\circ_{h})$ and $\hr+\sigma_{\bf K}(\hr)$ is $(A_{h},\circ_{h})$-invariant. Then $r$ is a solution of the PYBE in $(A,\{,\},\circ)$ and $r+\sigma(r)$ is $(A,\{,\},\circ)$-invariant.
 \mlabel{inv-aybe}
\end{thm}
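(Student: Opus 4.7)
The plan is to split both the PYBE $\mathbf{A}(r)=\mathbf{C}(r)=0$ and the $(A,\{,\},\circ)$-invariance of $\beta:=r+\sigma(r)$ into an associative part (involving $\circ$) and a Lie part (involving $\{,\}$). The associative parts, $\mathbf{A}(r)=0$ and the $(A,\circ)$-invariance of $\beta$, are immediate from the hypotheses since $\circ$ is the same commutative associative product in $(A,\circ)$ and in the Poisson algebra $(A,\{,\},\circ)$. So the proof reduces to establishing $\mathbf{C}(r)=0$ and the $\{,\}$-invariance of $\beta$.

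For $\mathbf{C}(r)=0$, I will expand the identity $\mathbf{A}(\hr)=0$ in powers of $h$. Writing $x\circ_h y=\sum_{s\ge 0}(x\circ_s y)h^s$ so that $\{x,y\}=x\circ_1 y-y\circ_1 x$, and invoking the topological extension of \cite[Lemma~3.4]{A3}, the $(A_h,\circ_h)$-invariance of $\hr+\sigma_{\bf K}(\hr)$ together with $\mathbf{A}(\hr)=0$ forces $\mathbf{A}^{\rm op}(\hr)=0$ as well. I then compute the order-$h^1$ coefficient of $\mathbf{A}(\hr)-\mathbf{A}^{\rm op}(\hr)=0$ position by position in $A^{\otimes 3}$: after reindexing dummy variables, the difference in the first tensor position collapses to $\sum_{i,j}(a_i\circ_1 a_j-a_j\circ_1 a_i)\otimes b_i\otimes b_j=[r_{12},r_{13}]$, and analogously the remaining two differences collapse to $[r_{12},r_{23}]$ and $[r_{13},r_{23}]$. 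Their sum is $\mathbf{C}(r)$, whence $\mathbf{C}(r)=0$.

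For the $\{,\}$-invariance of $\beta$, I will expand the $(A_h,\circ_h)$-invariance of $\hat{\beta}$ at order $h^1$ for each $x\in A$, which yields the identity $(\ast)$: $(\id\otimes L_1(x)-R_1(x)\otimes\id)\beta=0$, where $L_1(x)y:=x\circ_1 y$ and $R_1(x)y:=y\circ_1 x$. Since $\sigma(\beta)=\beta$, applying $\sigma$ to $(\ast)$ produces the companion identity $(L_1(x)\otimes\id-\id\otimes R_1(x))\beta=0$. As $\mathrm{ad}(x)=L_1(x)-R_1(x)$ for the adjoint action of the bracket $\{,\}$, summing the two identities yields $(\mathrm{ad}(x)\otimes\id+\id\otimes\mathrm{ad}(x))\beta=0$, which is the desired $\{,\}$-invariance.

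The main obstacle is the careful bookkeeping in the order-$h^1$ expansion of $\mathbf{A}(\hr)-\mathbf{A}^{\rm op}(\hr)$, since the nine tensor terms arising across the three positions in $A^{\otimes 3}$ must be matched correctly through index relabeling. A secondary technical point is verifying the topological version of \cite[Lemma~3.4]{A3}, but this is routine because the lemma is a formal tensor identity preserved by $\mathbf{K}$-bilinear extension and completion under the $h$-adic norm.
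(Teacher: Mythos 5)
Your proposal is correct and follows essentially the same route as the paper's proof: deduce $\mathbf{A}^{\rm op}(\hr)=0$ from the invariance of the symmetric part, extract $\mathbf{C}(r)$ as the order-$h$ coefficient of $\mathbf{A}(\hr)-\mathbf{A}^{\rm op}(\hr)$ by pairing the three differences $\hr_{12}\circ_h\hr_{13}-\hr_{13}\circ_h\hr_{12}$, etc., and obtain the $\{,\}$-invariance by applying $\sigma$ to the invariance identity and adding (the paper adds at the level of $A_h$ and then divides by $h$, whereas you take the $h^1$ coefficient first; these are equivalent). The only cosmetic difference is that the paper does not explicitly isolate the ``associative part is a hypothesis'' observation, but it uses it in exactly the same way.
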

 \begin{proof} Since $\hr$ is a solution of the AYBE in $(A_{h},\circ_h )$ and $\hr+\sigma_{\bf K}(\hr)$ is $(A_{h},\circ_{h})$-invariant, similar to Remark~\ref{rop}, we have $\mathbf{A}^{\rm op}(\hr)=0$  in $(A_{h},\circ_{h})$. Note that
\vspb
{\small \begin{eqnarray*}
 \hr_{12} \circ_h \hr_{13} -\hr_{13} \circ_h \hr_{12}&=&\sum_{i,j}a_{i}\circ_h a_{j}\hat{\ot} b_{i}\hat{\ot} b_{j}-\sum_{i,j}a_{i}\circ_h a_{j}\hat{\ot} b_{j}\hat{\ot} b_{i}\\
 &=&\sum_{i,j}a_{i}\circ_h a_{j}\hat{\ot} b_{i}\hat{\ot} b_{j}-\sum_{i,j}a_{j}\circ_h a_{i}\hat{\ot} b_{i}\hat{\ot} b_{j}\\
 &=&\sum_{i,j}(a_{i}\circ_h a_{j}-a_{j}\circ_h a_{i})\hat{\ot} b_{i}\hat{\ot}b_{j}\\
 &\equiv& \sum_{i,j}\{a_{i},a_{j}\}\hat{\ot} b_{i} \hat{\ot} b_{j}~h\mmod{h^2}.
     \end{eqnarray*}
 }
  Similarly, we obtain
\vspb
  \begin{eqnarray*}
    \hr_{13} \circ_h \hr_{23} -\hr_{23} \circ_h \hr_{13}&\equiv& \sum_{i,j}a_{i}\hat{\ot} a_{j} \hat{\ot} \{b_{i}b_{j}\}~h\mmod{h^2},\\
        \hr_{12} \circ_h \hr_{23} -\hr_{23} \circ_h \hr_{12}&\equiv& \sum_{i,j}a_{i}\hat{\ot} \{b_{i},a_{j}\} \hat{\ot} b_{j} ~h\mmod{h^2}.
\vspd
 \end{eqnarray*}
   So
\vspb
$$\frac{\mathbf{A}(\hr)-\mathbf{A}^{\rm op}(\hr)}{h}\equiv \mathbf{C}(r)\mmod{h}.
\vspa
$$
Then we obtain $\mathbf{C}(r)=0$ in $(A,\{,\})$. Note that $r$ is a solution of the AYBE in $(A,\circ)$. Hence $r$ is also a solution of the PYBE in $(A,\{,\},\circ)$.

 Since $\hr+\sigma_{\bf K}(\hr)$ is $(A_{h},\circ_{h})$-invariant, we have
\vspa
\begin{equation}\mlabel{hiva}
(\mathrm{id}_{A_h} \hat{\otimes} L_{\circ_{h}}(x)-R_{\circ_{h}}(x)\hat{\otimes} \mathrm{id}_{A_h})(\hr+\sigma_{\bf K}(\hr))=0, \quad  x\in A.
\vspa
\end{equation}
Applying $\sigma_\mathbf{K}$  to both sides
of (\mref{hiva}), we deduce that
\vspa
   \begin{equation}\mlabel{ohiva}
     (L_{\circ_{h}}(x)\hat{\otimes} \mathrm{id}_{A_h}-\mathrm{id}_{A_h}\hat{\otimes} R_{\circ_{h}}(x))(\hr+\sigma_{\bf K}(\hr))=0,\;\; x\in A.
\vspa
   \end{equation}
So adding (\mref{hiva}) and (\mref{ohiva}) gives
\vspb
   \[((L_{\circ_{h}}(x)-R_{\circ_{h}}(x))\hat{\otimes} \mathrm{id}_{A_h}+\mathrm{id}_{A_h}\hat{\otimes}(L_{\circ_{h}}(x)-R_{\circ_{h}}(x)))(\hr+\sigma_{\bf K}(\hr))=0,\;\; x\in A.
   \vspb
   \]
   Since $\frac{L_{\circ_{h}}(x)-R_{\circ_{h}}(x)}{h}\equiv \rad_{\{,\}}(x) \mmod{h}$ for $x\in A$, we see that $r+\sigma(r)$ is also $(A,\{,\})$-invariant, implying
its $(A,\{,\},\circ)$-invariance due to the already assumed $(A,\circ)$-invariance.
\vspb
 \end{proof}

\begin{rmk}\label{rmkinvyb}
    In fact, without the assumption that $\hr$ is a solution of the AYBE in $(A_{h},\circ_{h})$, the conclusion involving the invariance is still available. Explicitly,
    for any symmetric and $(A,\circ)$-invariant $\beta\in A\ot A$, if $\hat {\beta}$ is $(A_{h},\circ_{h})$-invariant, then $\beta$ is $(A,\{,\},\circ)$-invariant.
\end{rmk}
\vspd

\subsection{Topological dual bimodules}\label{tdb}
For later needs, we fix notations and identifications on the interplay among linear duals, tensor products and deformations of vector spaces. We then prove the commutativity of the processes of taking the dual and taking the \qcls.

Let $V$ and $W$ be finite-dimensional vector spaces. Let $V^{\ast}=\textrm{Hom}_{\bfk}(V,\bfk)$ and
$W^{\ast}=\textrm{Hom}_{\bfk}(W,\bfk)$ be the dual space of $V$
and $W$ respectively. For $r\in W\otimes V$, let $r^{\sharp}$ be
the image under the linear isomorphism \vspb
\begin{equation}
    W\otimes V\cong \textrm{Hom}_{\bfk}(V^{\ast},W).
\mlabel{eq:tendual}
\vspb
\end{equation}
It is characterized by
\vspc
\begin{equation}\mlabel{map-tensor}
\langle w^{\ast},r^{\sharp}(v^{\ast})\rangle = \langle w^{\ast}\otimes v^{\ast},r \rangle,  \quad v^{\ast}\in V^{\ast}, w^{\ast}\in W^{\ast},
\vspb
\end{equation}
where $\langle\ ,\ \rangle$ is the natural pairing between the dual space of a vector space and the vector space.
For $f\in\textrm{Hom}_{\bfk}(V^{\ast},W)$, we also let $\tilde{f}$ denote its image in
$W\otimes V$.
For a vector space $\mathrm{A}$ and a linear map $\varrho:\mathrm{A}\rightarrow \textrm{End}_{\bfk}(V)$, we let $\varrho^{\ast}:\mathrm{A}\rightarrow \textrm{End}_{\bfk}(V^{\ast})$ be the unique linear map satisfying
\vspb
\begin{equation}
\langle\varrho^{\ast}(x)u^{\ast},v\rangle=-\langle u^{\ast},\varrho(x)v\rangle, \quad  x\in \mathrm{A}, u^{\ast}\in V^{\ast},v\in V.
\vspb
\end{equation}
Let $(A,\circ)$ be an associative algebra and $(V,l_{\circ},r_{\circ})$ be an $A$-bimodule. Then $(V,-r_{\circ}^{\ast},-l_{\circ}^{\ast})$ is an $A$-bimodule, called the \textbf{dual bimodule} of $(V,l_{\circ},r_{\circ})$.
Similarly, if $(P,[,],\cdot)$ is a Poisson algebra and $(W,\rho_{[,]},\rho_{\cdot})$ is a $(P,[,],\cdot)$-module, then
$(W^{\ast},\rho_{[,]}^{\ast},-\rho_{\cdot}^{\ast})$ is a $(P,[,],\cdot)$-module, called the \textbf{dual module} of the $(P,[,],\cdot)$-module $(W,\rho_{[,]},\rho_{\cdot})$.

For topologically free $\mathbf{K}$-modules $V_h$ and $W_h$, we have $\bf K$-module isomorphisms
\vspb
 $$\mathrm{Hom}_{\bfk}(V^{\ast},W)[[h]]\cong (W\otimes V)[[h]]\cong W_{h}\hat{\otimes} V_{h}.
 \vspb
 $$
On the other hand, by
\cite[\uppercase\expandafter{\romannumeral16}.7, Exercise 2]{Ka}, there is the $\bfK$-module isomorphism
\vspb
\begin{equation}
\Phi_{V,W}:\mathrm{Hom}_{\bfk}(V,W)[[h]]\to \mathrm{Hom}_{\mathbf{K}}(V_{h},W_{h}), \ \
    \Phi_{V,W}\Big(\sum_{s=0}^{\infty}\psi_{s}h^{s}\Big):=\sum_{s=0}^{\infty}(\psi_{s})_{\mathbf{K}}~h^{s},
\vspb
\end{equation}
where $\psi_{s}\in \mathrm{Hom}_{\bfk}(V,W)$ for $s\in \NN$, and
$(\psi_{s})_\mathbf{K}$
is defined as in (\ref{eq:TK}).
In particular, taking $W={\bf k}$, we obtain
\vspb
$$\textrm{Hom}_{\mathbf{K}}(V_{h},\mathbf{K})\cong\textrm{Hom}_{\bfk}(V,\bfk)[[h]]=(V^{\ast})_{h}.
\vspa
$$
Then the $\bf K$-linear map $\Phi_{V,\bf k}:\mathrm{Hom}_{\bfk}(V,\bfk)[[h]]\to \mathrm{Hom}_{\mathbf{K}}(V_{h},\bf K)$ is given by
\vspb
\begin{equation}
 \Phi_{V,\bfk}\Big(\sum_{s=0}^{\infty}v^{\ast}_{s}h^{s}\Big):=\sum_{s=0}^{\infty}(v^{\ast}_{s})_{\mathbf{K}}~h^{s},
\vspa
 \end{equation}
where $v^{\ast}_{s}\in \mathrm{Hom}_{\bfk}(V,\bfk)$ for any $s\in \NN$.
For clarity, we use the notation
\vspb
$$V^{\star}_{h}:=(V^{\ast})_{h}.
\vspb
$$
Then we obtain
\vspc
 \begin{equation}\label{is}
    \mathrm{Hom}_{\bfk}(V^{\ast},W)[[h]]\cong\mathrm{Hom}_{\bf K}(V^{\star}_{h},W_{h})\cong W_{h}\hat{\otimes} V_{h}\cong (W\otimes V)[[h]].
\vspb
 \end{equation}
Under these isomorphisms, the images of $f\in\mathrm{Hom}_{\bfk}(V^{\ast},W)$ in $\mathrm{Hom}_{\bf K}(V^{\star}_{h},W_{h})$, $W_{h}\hat{\otimes} V_{h}$ and $(W\otimes V)[[h]]$
   are denoted by $f_{\bf K}$, $\hat{\tilde{f}}$ and  $\tilde{f}$ respectively.
Also, the images
  of $r\in W\ot V$ in  $\mathrm{Hom}_{\bfk}(V^{\ast},W)[[h]]$, $\mathrm{Hom}_{\bf K}(V^{\star}_{h},W_{h})$ and $W_{h}\hat{\otimes} V_{h}$
 are denoted by $r^{\sharp}$, $(r^{\sharp})_{\bf K}$ and $\hr$
 respectively.

Next we define the pairing
\vspb
 \begin{equation}
\langle\ ,\ \rangle_{\bf K}: V_{h}^{\star}\hat{\ot}V_{h}\to \bfK,
\quad  \langle u^{\star},v^{\prime}\rangle_{\bf
K}:=\Phi_{V,\bfk}(u^{\star})(v^{\prime}),\;\;  u^{\star}\in
V_{h}^{\star}, v^{\prime}\in V_h.
\vspa
 \end{equation}
Since $\Phi_{V,\bfk}(v^{\ast})=(v^{\ast})_{\bf K},  v^{\ast}\in V^{\ast},$
 we get
\vspb
 \begin{equation}\label{pres}
    \langle v^{\ast},u\rangle_{\bf K}=(v^{\ast})_{\bf K}(u)=\langle v^{\ast},u\rangle,\quad v^{\ast}\in V^{\ast},u\in V.
\vspa
 \end{equation}
Hence, $\langle\ ,\ \rangle_{\bf K}$ is precisely the $\bf K$-bilinear extension of the usual pairing $\langle\ ,\ \rangle$.
Since there are natural isomorphisms of $\bfK$-modules
\vspa
$$W_{h}^{\star}\hat{\ot}V_{h}^{\star}\cong(W^{\ast}\otimes V^{\ast})[[h]]\;\textrm{and}\;W_{h}\hat{\ot}V_{h} \cong(W\otimes V)[[h]],
\vspa
$$
the pairing $\langle\ ,\ \rangle_{\bf K}$ can be turned into
a pairing between  $W_{h}^{\star}\hat{\ot}V_{h}^{\star}$ and $W_{h}\hat{\ot}V_{h}$ by
\vspa
\[\langle w^{\star}\hat{\ot}v^{\star},w_{h}\hat{\ot}v_{h} \rangle_{\bf K}=\langle w^{\star},w_{h}\rangle_{\bf K}\langle v^{\star},v_{h}\rangle_{\bf K},\quad w^{\star}\hat{\ot}v^{\star}\in W_{h}^{\star}\hat{\ot}V_{h}^{\star},w_{h}\hat{\ot}v_{h} \in W_{h}\hat{\ot}V_{h}.\]
 For any $r_h\in W_h\hat{\ot} V_h$, let $(r_h)^{\sharp}$ be the image of $r_h$ in the $\bfK$-linear isomorphism
\vspb
\begin{equation}\label{ih}
W_h\hat{\ot} V_h\cong \textrm{Hom}_{\bfK}(V^{\star}_h,W_h).
\vspb
\end{equation}
It is characterized by
\vspb
\begin{equation}
\langle w^{\star},r_{h}^{\sharp}(v^{\star})\rangle_{\bfK} = \langle  w^{\star}\hat{\ot} v^{\star},r_h \rangle_{\bfK},  \quad v^{\star}\in V^{\star}_h, w^{\star}\in W^{\star}_h.
\vspb
\end{equation}
For $f_h\in\textrm{Hom}_{\bfK}(V^{\star}_h,W_h)$, we also let $\tilde{f_h}$ denote its image in
$W_h\otimes V_h$. This agrees with the $\bfK$-linear isomorphism $W_h\hat{\ot} V_h\cong \textrm{Hom}_{\bfK}(V^{\star}_h,W_h)$ given in (\ref{is}) as follows.
\begin{pro}\label{pro-add}
Let $r$ be an element in $W\otimes V$. Then we have
\vspb
        \begin{equation}\label{eq-sh}
            (\hr)^{\sharp}=(r^{\sharp})_{\bfK}.
\vspb
        \end{equation}
Accordingly, for $f\in\Hom_{\bfk}(V,W)$, we have
    \begin{equation}\label{eq-tk}
\tilde{f_{\bfK}}=\hat{\tilde{f}}.
\end{equation}
\end{pro}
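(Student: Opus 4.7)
The plan is to establish \eqref{eq-sh} by the universal characterization of the $\sharp$-map through the pairings $\langle\,,\,\rangle$ and $\langle\,,\,\rangle_{\bfK}$, and then deduce \eqref{eq-tk} by inverting the isomorphism \eqref{ih}.

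First I would observe that both $(\hr)^{\sharp}$ and $(r^{\sharp})_{\bfK}$ are $\bfK$-linear maps $V_h^{\star}\to W_h$, so to show they coincide it suffices to show they satisfy the same defining pairing identity. By definition of $(\hr)^{\sharp}$, one has
\[
\langle w^{\star},(\hr)^{\sharp}(v^{\star})\rangle_{\bfK}=\langle w^{\star}\hat{\ot}v^{\star},\hr\rangle_{\bfK}\tforall v^{\star}\in V_h^{\star},\ w^{\star}\in W_h^{\star}.
\]
The task therefore reduces to verifying the same identity with $(\hr)^{\sharp}$ replaced by $(r^{\sharp})_{\bfK}$.

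Next I would write $r=\sum_i w_i\ot v_i$ as a finite sum in $W\ot V$, and expand $v^{\star}=\sum_s v^{\ast}_s h^s$, $w^{\star}=\sum_t w^{\ast}_t h^t$ with $v^{\ast}_s\in V^{\ast}$ and $w^{\ast}_t\in W^{\ast}$. Using the definition of the $\bfK$-linear extension in \eqref{eq:TK}, the characterization \eqref{map-tensor} of $r^{\sharp}$, and the compatibility \eqref{pres} of $\langle\,,\,\rangle_{\bfK}$ with $\langle\,,\,\rangle$, the left side $\langle w^{\star},(r^{\sharp})_{\bfK}(v^{\star})\rangle_{\bfK}$ expands to $\sum_{s,t,i}\langle v^{\ast}_s,v_i\rangle\langle w^{\ast}_t,w_i\rangle h^{s+t}$. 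On the other hand, since $\hr=\sum_i w_i\hat{\ot} v_i$, the right side $\langle w^{\star}\hat{\ot}v^{\star},\hr\rangle_{\bfK}$ expands, by the prescription for the pairing on $W_h^{\star}\hat{\ot}V_h^{\star}\times W_h\hat{\ot}V_h$ and \eqref{pres}, to the same expression. This proves \eqref{eq-sh}.

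For \eqref{eq-tk}, I would apply \eqref{eq-sh} to the element $r:=\tilde{f}\in W\ot V$ corresponding to $f$ under \eqref{eq:tendual}, so that $r^{\sharp}=f$ and hence $(r^{\sharp})_{\bfK}=f_{\bfK}$. Then $\hr=\hat{\tilde{f}}$ by definition, and \eqref{eq-sh} reads $(\hat{\tilde{f}})^{\sharp}=f_{\bfK}$; taking $\sharp^{-1}$ under the isomorphism \eqref{ih} yields $\hat{\tilde{f}}=\widetilde{f_{\bfK}}$, which is \eqref{eq-tk}. The main obstacle is not any deep computation but rather the bookkeeping needed to keep track of the three natural isomorphisms appearing in \eqref{is}, distinguishing $\bfk$-linear extensions from $\bfK$-linear maps between topologically free modules, and verifying that the embedding $r\mapsto \hr$ intertwines the defining pairings; once \eqref{pres} identifies $\langle\,,\,\rangle_{\bfK}$ as the $\bfK$-bilinear extension of $\langle\,,\,\rangle$, the rest is a direct term-by-term comparison.
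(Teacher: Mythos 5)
Your proposal is correct and follows essentially the same route as the paper: both verify \eqref{eq-sh} by showing that $(\hr)^{\sharp}$ and $(r^{\sharp})_{\bfK}$ satisfy the same defining pairing identity (the paper tests against $v^{\ast}\in V^{\ast}$ and then extends by $\bfK$-linearity, while you expand general elements of $V_h^{\star}$ and $W_h^{\star}$ term by term, which amounts to the same computation), and both deduce \eqref{eq-tk} by applying \eqref{eq-sh} to $r=\tilde{f}$ and using $(\tilde{f})^{\sharp}=f$.
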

\begin{proof}
Let $r=\sum_{i}w_{i}\otimes v_{i}\in W\ot V$ with $v_i\in V$ and $w_i\in
    W$. Let $v^{\ast}\in V^{\ast}, w^{\star}\in W^{\star}_h$. Then
    we have
\vspb
    \begin{eqnarray*}
        \langle w^{\star},(\hr)^{\sharp}(v^{\ast})\rangle_{\bfK} &=& \langle  w^{\star}\hat{\ot} v^{\ast},\hr \rangle_{\bfK}
   =\sum_{i}\langle  w^{\star},w_i \rangle_{\bfK}\langle v^{\ast},v_i \rangle_{\bfK}
        =\sum_{i}\langle  w^{\star},\langle v^{\ast},v_i \rangle_{\bfK}w_i \rangle_{\bfK}\\
        &=&\langle  w^{\star},\sum_{i} \langle v^{\ast},v_i \rangle w_i \rangle_{\bfK}
        =\langle  w^{\star},r^{\sharp}(v^{\ast}) \rangle_{\bfK}.
\vspd
    \end{eqnarray*}
So $(\hr)^{\sharp}(v^{\ast})=r^{\sharp}(v^{\ast})$ and hence
\vspb
\[(\hr)^{\sharp}(\sum_{s=0}^{\infty}v_s^{\ast}h^s)=\sum_{s=0}^{\infty}r^{\sharp}(v_s^{\ast})h^s,\;\sum_{s=0}^{\infty}v_s^{\ast}h^s\in V^{\star}_h.
\vspb
\]
Thus $(\hr)^{\sharp}$ coincides with the $\bfK$-linear extension of $r^{\sharp}$, which proves (\ref{eq-sh}).
By (\ref{eq-sh}), we have $(\hat{\tilde{f}})^{\sharp}=((\tilde{f})^{\sharp})_{\bfK}.$
Then the last conclusion follows from
$(\tilde{f})^{\sharp}=f$.
\end{proof}
For topologically free $\bf K$-modules $\mathrm{A}_h$ and $V_h$, and a $\bf K$-linear map $\varrho_{h}:\mathrm{A}_h\to \textrm{End}_{\bf K}(V_{h})$, let $\varrho^{\star}_{h}:\mathrm{A}_{h}\to \textrm{End}_{\bf K}(V^{\star}_h)$ be the unique $\bf K$-linear map such that
\begin{equation}\mlabel{star}
\langle \varrho^{\star}_{h}(x^{\prime})u^{\star},v^{\prime}\rangle_{\bf K}=-\langle u^{\star},\varrho_{h}(x^{\prime})v^{\prime}\rangle_{\bf K},  x^{\prime}\in \mathrm{A}_h , v^{\prime}\in V_h , u^{\star}\in V^{\star}_{h}.
\end{equation}
Let $(A_{h},\circ_{h})$ be a topologically free associative algebra and $(V_{h}, l_{\circ_h} , r_{\circ_h})$ be a topologically free $(A_{h},\circ_{h})$-bimodule. Then $(V^{\star}_h,-r^{\star}_{\circ_ h},-l^{\star}_{\circ_ h})$ is a topologically free $(A_{h},\circ_{h})$-bimodule, called the \textbf{topological dual bimodule} of $(V_{h}, l_{\circ_h} , r_{\circ_h})$.

\begin{pro}\label{dedu}
Let $(A,\circ)$ be an associative algebra and $(V, l_{\circ}, r_{\circ})$ be an $(A,\circ)$-bimodule. Let $(A_{h},\circ_{h})$ be an associative deformation of  $(A,\circ)$ and $(V_{h}, l_{\circ_h} , r_{\circ_h})$ be an $(A_{h},\circ_{h})$-bimodule deformation of $(V, l_{\circ}, r_{\circ})$. Then  $(V^{\star}_h,-r^{\star}_{\circ_h},-l^{\star}_{\circ_ h})$ is an $(A_{h},\circ_{h})$-bimodule deformation of the $(A,\circ)$-bimodule $(V,-r_{\circ}^{\ast},-l_{\circ}^{\ast})$.
\end{pro}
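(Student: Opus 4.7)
The plan is to invoke the bimodule version of Proposition \mref{edm} (obtained by specializing the multiplications to be trivial, cf.\ Remark \mref{qclbd}), which reduces the claim to verifying the two congruences
\begin{equation*}
-r^{\star}_{\circ_h}(x)\,u^{\ast} \equiv -r^{\ast}_{\circ}(x)\,u^{\ast} \mmod{h}, \qquad -l^{\star}_{\circ_h}(x)\,u^{\ast} \equiv -l^{\ast}_{\circ}(x)\,u^{\ast} \mmod{h},
\end{equation*}
for all $x\in A$ and $u^{\ast}\in V^{\ast}$, where $u^{\ast}$ is identified with its image in $V^{\star}_{h}=V^{\ast}[[h]]$ under the canonical embedding. (The congruence $x\circ_h y\equiv x\circ y\mmod{h}$ required by Proposition \mref{edm} already holds by hypothesis, since $(A_{h},\circ_{h})$ is an associative deformation of $(A,\circ)$.)

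First, I would unpack the defining relation \meqref{star} of the topological dual action against an arbitrary $v\in V$: namely,
\begin{equation*}
\langle -r^{\star}_{\circ_h}(x)\,u^{\ast},\, v\rangle_{\bf K}
= -\langle u^{\ast},\, r_{\circ_h}(x)\,v\rangle_{\bf K}.
\end{equation*}
Since $(V_{h},l_{\circ_h},r_{\circ_h})$ is an $(A_{h},\circ_{h})$-bimodule deformation of $(V,l_{\circ},r_{\circ})$, the congruence $r_{\circ_h}(x)v\equiv r_{\circ}(x)v\mmod{h}$ holds by (the bimodule version of) Proposition \mref{edm}. Combining this with \meqref{pres}, which says that $\langle\ ,\ \rangle_{\bf K}$ restricts to the ordinary pairing on $V^{\ast}\times V$, I obtain
\begin{equation*}
\langle -r^{\star}_{\circ_h}(x)\,u^{\ast},\, v\rangle_{\bf K}
\equiv -\langle u^{\ast},\, r_{\circ}(x)\,v\rangle
= \langle -r^{\ast}_{\circ}(x)\,u^{\ast},\, v\rangle \mmod{h}.
\end{equation*}

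Because $V$ is finite-dimensional, evaluating on elements of $V$ separates points of $V^{\ast}$, so the two elements $-r^{\star}_{\circ_h}(x)\,u^{\ast}$ and $-r^{\ast}_{\circ}(x)\,u^{\ast}$ of $V^{\star}_{h}$ agree modulo $h$. The second congruence, for the right actions $-l^{\star}_{\circ_h}$ and $-l^{\ast}_{\circ}$, is proved by an entirely analogous argument with the roles of $l$ and $r$ interchanged. Applying Proposition \mref{edm} to the resulting data then completes the verification. I do not anticipate any genuine obstacle beyond careful bookkeeping: the only point one must handle with some care is that the pairing $\langle\ ,\ \rangle_{\bf K}$ specializes correctly on classical elements (which is precisely \meqref{pres}), so that the comparison takes place in $V^{\star}_{h}/hV^{\star}_{h}\cong V^{\ast}$ as required by the definition of a bimodule deformation.
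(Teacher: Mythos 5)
Your proposal is correct and follows essentially the same route as the paper's own proof: both reduce the claim via Proposition \mref{edm} to the congruences $-r^{\star}_{\circ_h}(x)\equiv -r^{\ast}_{\circ}(x)$ and $-l^{\star}_{\circ_h}(x)\equiv -l^{\ast}_{\circ}(x)$ modulo $h$, and verify them by unwinding the defining relation \meqref{star} of the dual action against the pairing and using \meqref{pres}. The only cosmetic difference is that the paper pairs against a general $u_h\in V_h$ while you pair against $v\in V$, which suffices equally well to detect the zeroth-order term.
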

\begin{proof}
 Let $x\in A, u_{h}\in V_h$ and $v^{\ast}\in V^{\ast}$. Then we have
 \begin{eqnarray*}
    \langle -r_{\circ_{h}}^{\star}(x)v^{\ast},u_{h}\rangle_{\bf K}
    &&=\langle v^{\ast},r_{\circ_{h}}(x)u_{h}\rangle_{\bf K}\\
    &&\equiv\langle v^{\ast},r_{\circ}(x)u\rangle\mmod{h}\\
    &&\equiv\langle -r_{\circ}^{\ast}(x)v^{\ast},u\rangle\mmod{h}\\
        &&\equiv\langle -r_{\circ}^{\ast}(x)v^{\ast},u_{h}\rangle_{\bf K}\mmod{h},
 \end{eqnarray*}
where $u$ is the element in $V$ such that $u_{h}\equiv u\quad(\mathrm{mod}~h)$.
Then we deduce that $-r_{\circ_{h}}^{\star}(x)\equiv-r_{\circ}^{\ast}(x)\mmod{h}$. Similarly, we can deduce that $-l_{\circ_{h}}^{\star}(x)\equiv-l_{\circ}^{\ast}(x)\mmod{h}$.
Thus $(V^{\star}_h,-r^{\star}_{\circ h},-l^{\star}_{\circ_ h})$ is an $(A_{h},\circ_{h})$-bimodule deformation of the $(A,\circ)$-bimodule $(V^{\ast},-r_{\circ}^{\ast},-l_{\circ}^{\ast})$.
\end{proof}
The following conclusion shows that the processes of taking the
dual modules and taking the \qcls commute.
\begin{pro}\label{cldu}
Let $(A,\circ)$ be a commutative associative algebra and $(V,\rho_{\circ})$ be an $(A,\circ)$-module.  Let $(A_{h},\circ_{h})$ be an associative deformation of $(A,\circ)$ and
$(A,\{,\},\circ)$ be the Poisson algebra as the \qcl. Let $(V_{h},l_{\circ_h},r_{\circ_h})$ be the $A_{h}$-bimodule deformation of $(V,\rho_{\circ})$ and $(V,\rho_{\{,\}},\rho_{\circ})$ be the $(A,\{,\},\circ)$-module as the corresponding \qcl. Then the \qcl of the
$A_{h}$-bimodule deformation $(V^{\star}_h,-r^{\star}_{\circ_ h},-l^{\star}_{\circ_ h})$ of the $(A,\circ)$-module $(V^{\ast},-\rho_{\circ}^{\ast})$ is exactly the $(A,\{,\},\circ)$-module $(V^{\ast},\rho_{\{,\}}^{\ast},-\rho_{\circ}^{\ast})$ from taking the dual module of the $(A,\{,\},\circ)$-module $(V,\rho_{\{,\}},\rho_{\circ})$
\end{pro}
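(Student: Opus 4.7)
The plan is to apply Corollary~\mref{cor-defmod} to the dual $A_h$-bimodule deformation $(V^\star_h,-r^\star_{\circ_h},-l^\star_{\circ_h})$ provided by Proposition~\mref{dedu}, and then verify by a direct mod-$h$ computation that the resulting pair of QCL module actions on $V^\ast$ coincides with $(\rho_{\{,\}}^\ast,-\rho_\circ^\ast)$.

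Since $(V,\rho_\circ)$ is an $(A,\circ)$-module with $\circ$ commutative, we have $l_\circ=r_\circ=\rho_\circ$, so Proposition~\mref{dedu} yields a bimodule deformation of $(V^\ast,-\rho_\circ^\ast)$. Writing $(l^{\mathrm{d}},r^{\mathrm{d}}):=(-r^\star_{\circ_h},-l^\star_{\circ_h})$ for the dual bimodule actions, Corollary~\mref{cor-defmod} then produces an $(A,\{,\},\circ)$-module $(V^\ast,\rho',\rho'')$ determined modulo $h$ by
\[
\rho''(x)u^\ast \equiv -r^\star_{\circ_h}(x)u^\ast \mmod{h},\qquad \rho'(x)u^\ast \equiv \frac{l^{\mathrm{d}}(x)u^\ast-r^{\mathrm{d}}(x)u^\ast}{h}\equiv \frac{l^\star_{\circ_h}(x)-r^\star_{\circ_h}(x)}{h}u^\ast \mmod{h},
\]
where the second simplification uses $l^{\mathrm d}-r^{\mathrm d}=-r^\star_{\circ_h}-(-l^\star_{\circ_h})=l^\star_{\circ_h}-r^\star_{\circ_h}$.

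The identification $\rho''=-\rho_\circ^\ast$ is immediate from $r_{\circ_h}\equiv\rho_\circ\mmod{h}$ (Proposition~\mref{edm}) together with the defining relation~\meqref{star} and~\meqref{pres}. For $\rho'$, testing against an arbitrary $v\in V\subset V_h$ and using~\meqref{star}, the defining congruence~\meqref{cmod} of $\rho_{\{,\}}$, and~\meqref{pres}, one computes
\begin{align*}
\langle\rho'(x)u^\ast,v\rangle
&\equiv \Big\langle \tfrac{l^\star_{\circ_h}(x)u^\ast-r^\star_{\circ_h}(x)u^\ast}{h},v\Big\rangle_{\bf K}
= -\Big\langle u^\ast,\tfrac{l_{\circ_h}(x)v-r_{\circ_h}(x)v}{h}\Big\rangle_{\bf K}\\
&\equiv -\langle u^\ast,\rho_{\{,\}}(x)v\rangle
= \langle\rho_{\{,\}}^\ast(x)u^\ast,v\rangle\mmod{h}.
\end{align*}
Since $v\in V$ is arbitrary, this forces $\rho'(x)u^\ast=\rho_{\{,\}}^\ast(x)u^\ast$, completing the identification.

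No serious obstacle is expected; the verification is essentially bookkeeping. The only delicate point is sign tracking: the sign flip built into the dual construction and the swap of left and right actions combine so that $\rho'$ lands on $+\rho_{\{,\}}^\ast$ rather than $-\rho_{\{,\}}^\ast$, matching the sign convention adopted for the dual of a Poisson module. One must also be careful to pass cleanly from the ${\bf K}$-bilinear pairing $\langle\,,\,\rangle_{\bf K}$ to the ordinary pairing $\langle\,,\,\rangle$ via~\meqref{pres} when extracting the mod-$h$ term.
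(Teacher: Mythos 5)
Your proof is correct and follows essentially the same route as the paper's: both identify the QCL bracket action of the dual bimodule deformation by pairing $\frac{l^{\star}_{\circ_h}(x)-r^{\star}_{\circ_h}(x)}{h}$ against elements of $V$ via \eqref{star} and the defining congruence \eqref{cmod}, arriving at $\rho_{\{,\}}^{\ast}$. Your additional framing via Proposition~\ref{dedu} and Corollary~\ref{cor-defmod} only makes explicit what the paper leaves implicit, so there is no substantive difference.
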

\begin{proof}
    Let $x\in A, u^{\ast}\in V^{\ast},
    v_{h}\in V_h$. Then we have
\vspb
    \begin{eqnarray*}
 \langle -r_{\circ_{h}}^{\star}(x)u^{\ast}, v_{h}\rangle_{\bf K}-\langle -l_{\circ_{h}}^{\star}(x)u^{\ast}, v_{h}\rangle_{\bf K}
        &=&\langle u^{\ast},r_{\circ_{h}}(x) v_{h}-l_{\circ_{h}}(x)v_{h}\rangle_{\bf K}\\
        &\equiv&\langle u^{\ast},-\rho_{\{,\}}(x)v\rangle~h\mmod{h^2}\\
            &\equiv&\langle \rho_{\{,\}}^{\ast}(x)u^{\ast},v\rangle~h\mmod{h^2}\\
                &\equiv&\langle \rho_{\{,\}}^{\ast}(x)u^{\ast}, v_{h}\rangle_{\bf K}~h\mmod{h^2},
\vspb
    \end{eqnarray*}
where $v$ is the element in $V$ such that $ v_{h}\equiv v\quad(\mathrm{mod}~h)$.
    Hence we obtain \[\frac{-r_{\circ_{h}}^{\star}(x)-(-l_{\circ_{h}}^{\star}(x))}{h}\equiv \rho_{\{,\}}^{\ast}(x)\ (\mathrm{mod}~h).\]
    Thus we conclude that the \qcl of the
    $A_{h}$-bimodule deformation $(V^{\star}_h,-r^{\star}_{\circ_ h},-l^{\star}_{\circ_ h})$ is exactly the $(A,\{,\},\circ)$-module $(V^{\ast},\rho_{\{,\}}^{\ast},-\rho_{\circ}^{\ast})$.
\end{proof}
\vspb
\subsection{Scalarly deformed solutions of the AYBE and scalarly deformed $\calo$-operators}\label{css}
We first recall a basic equivalent relation between solutions of the AYBE and $\calo$-operators.
\vspb
 \begin{pro}{\rm{\cite [Corollary 3.6]{BGN2}}}
Let $(A,\circ)$ be a finite-dimensional associative algebra and $r\in A\otimes A$. Let
$\beta:=(r+\sigma(r))/2$ be $(A,\circ)$-invariant in the sense of
\meqref{eq:inv}. Let $r^\sharp$ and $\beta^\sharp$ be the images
of $r$ and $\beta$ under $A\otimes A\cong
\mathrm{Hom}_{\bfk}(A^{\ast},A)$ $($see \meqref{eq:tendual}$)$.
Then \vspb
   \begin{equation*}
    R_{\circ}^{\ast}(\beta^{\sharp}(a^{\ast}))b^{\ast}=L_{\circ}^{\ast}(\beta^{\sharp}(b^{\ast}))a^{\ast},\quad  a^{\ast},b^{\ast}\in A^{\ast}.
\vspa
   \end{equation*}
   Moreover, define a binary operation $\cdot$ on $A^{\ast}$ by
\vspb
    \begin{equation}
    a^{\ast}\cdot b^{\ast}:=2R_{\circ}^{\ast}(\beta^{\sharp}(a^{\ast}))b^{\ast},\quad  a^{\ast},b^{\ast}\in A^{\ast}.
\vspa
    \end{equation}
  Then $(A^{\ast},\cdot)$ is an associative algebra and $(A^{\ast},\cdot,-R_{\circ}^{\ast},-L_{\circ}^{\ast})$ is an $(A,\circ)$-bimodule algebra.
    Furthermore, $r$ is a solution of the AYBE in $(A,\circ)$ if and only if $r^{\sharp}$ is an $\mathcal{O}$-operator of weight $1$ on $(A,\circ)$ associated to $(A^{\ast},\cdot,-R_{\circ}^{\ast},-L_{\circ}^{\ast})$.
\mlabel{aw1}
\end{pro}
\begin{rmk}\label{cam}
    When the associative algebra $(A,\circ)$ in the above statement is commutative, then
    $(A^{\ast},\cdot)$ is commutative and $(A^{\ast},\cdot,-R_{\circ}^{\ast})$ is an $(A,\circ)$-module algebra. Note that in this case, $L_{\circ}^{\ast}=R_{\circ}^{\ast}$.
\end{rmk}

The following is the analogue of Proposition~\mref{aw1} for
topologically free associative algebras and can be proved in the
same way (see the proof of \cite [Corollary 3.6]{BGN2}).
\begin{pro}
Let $A$ be a finite-dimensional vector
space and $(A_{h},\circ_{h})$ be a topologically free associative
algebra. Let $r_h\in A_h\hat{\otimes} A_h$ and
$\beta_h:=(r_h+\sigma(r_h))/2$ be the symmetric part of $r_h$.
Denote the isomorphic images of $r_h$ and $\beta_h$ in
$\mathrm{Hom}_{\bfK}(A^{\star}_h,A_h)$ by $(r_h)^{\sharp}$ and
$(\beta_h)^{\sharp}$ respectively.  Suppose that $\beta_h$ is
$(A_{h},\circ_{h})$-invariant. Then \vspb
\begin{equation}\label{lr}
R_{\circ_{h}}^{\star}((\beta_h)^{\sharp}(a^{\star}))b^{\star}=L_{\circ_{h}}^{\star}((\beta_h)^{\sharp}(b^{\star}))a^{\star},\quad  a^{\star},b^{\star}\in A^{\star}_h.
\vspb
\end{equation}
Moreover, define a binary operation $\cdot_h$ on $A^{\star}_h$ by
\vspb
\begin{equation}
a^{\star}\cdot_h b^{\star}:=2R_{\circ_h}^{\star}((\beta_h)^{\sharp}(a^{\star}))b^{\star},\quad  a^{\star},b^{\star}\in A^{\star}_h.
\vspb
\end{equation}
Then $(A^{\star}_{h},\cdot_{h})$ is a topologically free associative algebra and  $(A^{\star}_{h},\cdot_{h},-R_{\circ_{h}}^{\star},-L_{\circ_{h}}^{\star})$ is a topologically free $(A_{h},\circ_{h})$-bimodule algebra.
Furthermore, $r_h$ is a solution of the AYBE in $(A_{h},\circ_{h})$ if and only if $(r_h)^{\sharp}$ is an $\mathcal{O}$-operator of weight $1$ on $(A_h ,\circ_h)$
associated to $(A^{\star}_{h},\cdot_{h},-R_{\circ_{h}}^{\star},-L_{\circ_{h}}^{\star})$.
\mlabel{taw1}
\end{pro}

The following is the PYBE analogue of Proposition~\mref{aw1}.
\begin{thm}{\rm{\mcite{NB}}}
Let $(A,\{,\},\circ)$ be a finite-dimensional Poisson algebra and $r\in A\otimes A$.
Suppose that $\beta:=(r+\sigma(r))/2$ is
$(A,\{,\},\circ)$-invariant. Then \vspb
\begin{equation}
[a^{\ast},b^{\ast}]:=-2\rad_{\{,\}}^{\ast}(\beta^{\sharp}(a^{\ast}))b^{\ast},\quad a^{\ast}\cdot b^{\ast}:=2R_{\circ}^{\ast}(\beta^{\sharp}(a^{\ast}))b^{\ast},\quad  a^{\ast},b^{\ast}\in A^{\ast},
\vspb
\end{equation}
define a Poisson algebra $(A^{\ast},[,],\cdot)$, and $(A^{\ast},[,],\cdot,\rad_{\{,\}}^{\ast},-R_{\circ}^{\ast})$ is an $(A,\{,\},\circ)$-module Poisson algebra.
Furthermore, $r$ is a solution of the PYBE in $(A,\{,\},\circ)$ if and only if $r^{\sharp}$ is an $\mathcal{O}$-operator of weight $1$ on $(A,\{,\},\circ)$ associated to the $(A,\{,\},\circ)$-module $(A^{\ast},[,],\cdot,\rad_{\{,\}}^{\ast},-R_{\circ}^{\ast})$.
\mlabel{pw1}
\vspb
\end{thm}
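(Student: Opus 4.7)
The strategy is to decompose the PYBE into its AYBE and CYBE components, invoke the corresponding operator characterizations for each piece, and then check the Poisson-algebra and module-Poisson-algebra compatibilities using the joint invariance of $\beta$ together with the Leibniz rule of the original Poisson algebra.

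Since $\beta$ is $(A,\{,\},\circ)$-invariant, it is in particular invariant with respect to the commutative algebra $(A,\circ)$ and to the adjoint representation of the Lie algebra $(A,\{,\})$. For the associative part, Theorem~\mref{aw1} together with Remark~\mref{cam} directly yields that $(A^{\ast},\cdot)$ is a commutative associative algebra, that $(A^{\ast},\cdot,-R_{\circ}^{\ast})$ is an $(A,\circ)$-module algebra, and that $\mathbf{A}(r)=0$ is equivalent to $r^{\sharp}$ satisfying equation~\meqref{rpa} with weight $1$. A completely parallel argument handles the Lie part through the classical Lie-theoretic counterpart of Theorem~\mref{aw1}: the invariance of the symmetric tensor $\beta$ under $\rad$ forces the identity
\[\rad_{\{,\}}^{\ast}(\beta^{\sharp}(a^{\ast}))b^{\ast}=-\rad_{\{,\}}^{\ast}(\beta^{\sharp}(b^{\ast}))a^{\ast},\quad a^{\ast},b^{\ast}\in A^{\ast},\]
so that $[,]$ is antisymmetric, while the Jacobi identity for $(A^{\ast},[,])$ and the module Lie algebra axioms for $(A^{\ast},[,],\rad_{\{,\}}^{\ast})$ follow from $\mathbf{C}(r)=0$ together with the invariance of $\beta$; the same computation shows that $r$ solves the CYBE if and only if $r^{\sharp}$ satisfies equation~\meqref{rpl} with weight $1$.

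What remains is to check the Leibniz compatibility
\[[a^{\ast},b^{\ast}\cdot c^{\ast}]=[a^{\ast},b^{\ast}]\cdot c^{\ast}+b^{\ast}\cdot[a^{\ast},c^{\ast}],\quad a^{\ast},b^{\ast},c^{\ast}\in A^{\ast},\]
together with the mixed compatibilities between $\rad_{\{,\}}^{\ast}$ and $-R_{\circ}^{\ast}$ required by Definition~\mref{Pma} for $(A^{\ast},[,],\cdot,\rad_{\{,\}}^{\ast},-R_{\circ}^{\ast})$ to be a module Poisson algebra over $(A,\{,\},\circ)$. The key tools are the intertwining relations $\beta^{\sharp}\circ\rad_{\{,\}}^{\ast}(x)=\rad_{\{,\}}(x)\circ\beta^{\sharp}$ and $\beta^{\sharp}\circ R_{\circ}^{\ast}(x)=R_{\circ}(x)\circ\beta^{\sharp}$ (both obtained by dualizing the invariance of $\beta$, the second using commutativity of $\circ$), together with the Poisson Leibniz rule $\{x,y\circ z\}=\{x,y\}\circ z+y\circ\{x,z\}$ in $A$; with these in hand each axiom reduces to a direct three-slot calculation on $\beta$. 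The main obstacle is precisely the bookkeeping in these reductions, since every identity must be expanded through the two mutually non-commuting actions $\rad_{\{,\}}^{\ast}$ and $R_{\circ}^{\ast}$ and each intermediate step must invoke the correct intertwining property of $\beta^{\sharp}$. Finally, the ``Furthermore'' equivalence is immediate from the two steps above, as the PYBE~\meqref{pybe} is by definition the conjunction $\mathbf{A}(r)=\mathbf{C}(r)=0$, while the $\mathcal{O}$-operator condition on the Poisson algebra is the conjunction~\meqref{rpl}--\meqref{rpa}.
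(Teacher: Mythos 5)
The paper offers no proof of Theorem~\mref{pw1} --- it is quoted from~\mcite{NB} --- so your proposal can only be judged on its own terms. Your overall strategy (split the PYBE into its associative and Lie components, invoke Theorem~\mref{aw1} with Remark~\mref{cam} for the associative half and the Lie-theoretic counterpart for the other half, then verify the Leibniz and module compatibilities from the invariance of $\beta$) is the right one. But there is a genuine logical gap in the Lie part: you claim that the Jacobi identity for $(A^{\ast},[,])$ and the module Lie algebra axioms ``follow from $\mathbf{C}(r)=0$ together with the invariance of $\beta$.'' The theorem asserts the Poisson algebra and module Poisson algebra structures \emph{unconditionally}, with the invariance of $\beta$ as the only hypothesis; $\mathbf{C}(r)=0$ is not available at that stage, and cannot be, since the symmetric part $\beta$ does not determine whether the particular $r$ solves the CYBE. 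Worse, the ``furthermore'' equivalence presupposes that the module Poisson algebra is already constructed (otherwise the $\mathcal{O}$-operator condition \meqref{rpl}--\meqref{rpa} is not even well posed), so making the construction depend on $\mathbf{C}(r)=0$ renders the argument circular. The repair is the exact analogue of the associative case: invariance gives the intertwining relation $\beta^{\sharp}\circ\rad_{\{,\}}^{\ast}(x)=\rad_{\{,\}}(x)\circ\beta^{\sharp}$, whence $\beta^{\sharp}([a^{\ast},b^{\ast}])=-2\{\beta^{\sharp}(a^{\ast}),\beta^{\sharp}(b^{\ast})\}$ and therefore $[[a^{\ast},b^{\ast}],c^{\ast}]=4\bigl(\rad_{\{,\}}^{\ast}(\beta^{\sharp}(a^{\ast}))\rad_{\{,\}}^{\ast}(\beta^{\sharp}(b^{\ast}))-\rad_{\{,\}}^{\ast}(\beta^{\sharp}(b^{\ast}))\rad_{\{,\}}^{\ast}(\beta^{\sharp}(a^{\ast}))\bigr)c^{\ast}$, which is the Jacobi identity with no Yang--Baxter input; the derivation property of $\rad_{\{,\}}^{\ast}(x)$ over $[,]$ follows the same way.

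A secondary but relevant point: with the paper's convention $\langle\varrho^{\ast}(x)u^{\ast},v\rangle=-\langle u^{\ast},\varrho(x)v\rangle$, the two intertwining relations you list do not carry the same sign. The $(A,\circ)$-invariance \meqref{eq:inv} together with commutativity gives $\beta^{\sharp}\circ R_{\circ}^{\ast}(x)=-R_{\circ}(x)\circ\beta^{\sharp}$, whereas the $(A,\{,\})$-invariance gives $\beta^{\sharp}\circ\rad_{\{,\}}^{\ast}(x)=+\rad_{\{,\}}(x)\circ\beta^{\sharp}$; the discrepancy reflects the different shapes of the two invariance conditions. Since you identify the sign bookkeeping in the compatibility checks as the main difficulty, starting from the wrong intertwiners would derail exactly those computations. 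The remaining ingredients --- the associative half via Theorem~\mref{aw1}, the antisymmetry of $[,]$ from invariance, and the reduction of the ``furthermore'' clause to the conjunction of \meqref{rpl} and \meqref{rpa} defining the PYBE \meqref{pybe} --- are sound.
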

Let $(A,\circ)$ be a finite-dimensional commutative associative algebra, $(A_{h},\circ_{h})$ be an
associative deformation and $(A,\{,\},\circ)$ be the corresponding
\qcl. Let $r$ be in $A\otimes A$ and $\beta=(r+\sigma(r))/2$ be
its symmetric part.
 Denote the isomorphic images of $r$ and $\beta$ in $\mathrm{Hom}_{\bfk}(A^{\ast},A)$ by $r^{\sharp}$ and $\beta^{\sharp}$ respectively.
 Denote the isomorphic images of $r$ and $\beta$ under the linear embedding of $A\otimes A$ into $A_{h}\hat{\otimes}A_{h}$ by $\hr$ and $\hat{\beta}$ respectively.
Suppose that $\hat{\beta}$ is $(A_{h},\circ_{h})$-invariant.
 By Proposition \ref{taw1}, $(A^{\star}_{h},\cdot_{h})$ is a topologically free associative algebra and $(A^{\star}_{h},\cdot_{h},-R_{\circ_{h}}^{\star},-L_{\circ_{h}}^{\star})$ is a
 topologically free $(A_{h},\circ_{h})$-bimodule algebra, where
\vspc
 \[ a^{\star}\cdot_h b^{\star}:=2R_{\circ_h}^{\star}((\hat{\beta})^{\sharp}(a^{\star}))b^{\star},\quad  a^{\star},b^{\star}\in A^{\star}_h.
 \vspb
 \]
 By Remark~\ref{rmkinvyb}, $\beta$ is $(A,\{,\},\circ)$-invariant. Thus by Remark \ref{cam},
   $(A^{\ast},\cdot,-R_{\circ}^{\ast})$ is an $(A,\circ)$-module algebra, where
\vspc
\[ a^{\ast}\cdot b^{\ast}:=2R_{\circ}^{\ast}(\beta^{\sharp}(a^{\ast}))b^{\ast},\quad  a^{\ast},b^{\ast}\in A^{\ast}.
\vspb
\]
  By Theorem \ref{pw1},
   $(A^{\ast},[,],\cdot,\rad_{\{,\}}^{\ast},-R_{\circ}^{\ast})$ is an $(A,\{,\},\circ)$-module Poisson algebra, where
  \[ [a^{\ast},b^{\ast}]:=-2\rad_{\{,\}}^{\ast}(\beta^{\sharp}(a^{\ast}))b^{\ast},\quad a^{\ast}\cdot b^{\ast}:=2R_{\circ}^{\ast}(\beta^{\sharp}(a^{\ast}))b^{\ast},\quad  a^{\ast},b^{\ast}\in A^{\ast}.\]
 \begin{thm}
    Keep the assumptions and notations as above.
\begin{enumerate}
\item \label{w1-1} The topologically free $(A_{h},\circ_{h})$-bimodule algebra $(A^{\star}_{h},\cdot_{h},-R_{\circ_{h}}^{\star},-L_{\circ_{h}}^{\star})$ is an $(A_{h},\circ_{h})$-bimodule algebra deformation of the $(A,\circ)$-module algebra $(A^{\ast},\cdot,-R_{\circ}^{\ast})$, and its \qcl is exactly the $(A,\{,\},\circ)$-module Poisson algebra $(A^{\ast},[,],\cdot,\rad_{\{,\}}^{\ast},-R_{\circ}^{\ast})$.
\item \label{w1-2}
Take the downward arrows in the following diagram. Then starting from a solution $r$ of the AYBE
 in $(A,\circ)$, the left square in the diagram commutes.
Moreover, starting from a solution $\hr$ of the AYBE
 in $(A_{h},\circ_{h})$, the right square in the diagram commutes.
    \item \label{w1-3}
Take the upward arrows in the following diagram. Then starting from the $\mathcal{O}$-operator $r^{\sharp}$ of weight $1$ on $(A,\circ)$ associated to $(A^*,\cdot,-R^*_{\circ})$, the left square of the diagram commutes. Moreover, starting from the $\mathcal{O}$-operator $(\hr)^{\sharp}$ of weight $1$ on $(A_h ,\circ_h)$ associated to $(A^{\star}_{h},\cdot_{h},-R_{\circ_{h}}^{\star}, -L_{\circ_{h}}^{\star})$, the right square of the diagram
commutes.
\end{enumerate}
   \begin{equation}\label{diagw1}
   \begin{split}
  \xymatrixcolsep{5.5pc}\xymatrixrowsep{3pc}
  \xymatrix{
  *\txt{\small solution $r$ of\\ \small AYBE in $(A,\circ)$,\\
\small $(A,\circ)$-invariant $\beta$}\ar[r]^{\txt{\tiny deformation}}\ar@<-2pt>[d]|-{\txt{\tiny Proposition~\mref{aw1}}}&*\txt{\small solution $\hr$ of\\ \small AYBE in $(A_{h},\circ_{h})$,\\ \small $(A_{h},\circ_{h})$-invariant $\hat{\beta}$}\ar[r]^{\txt{\tiny QCL}}_{\txt{\tiny Theorem~\mref{inv-aybe}}}\ar@<-2pt>[d]|-{\txt{\tiny Proposition~\mref{taw1}}}&*\txt{\small solution $r$ of\\ \small PYBE in $(A,\{,\},\circ)$,\\ \small $(A,\{,\},\circ)$-invariant $\beta$}\ar@<-2pt>[d]|-{\txt{\tiny Theorem \mref{pw1}}}\\
  *\txt{\small $\mathcal{O}$-operator $r^{\sharp}$ on\\ \small $(A,\circ)$ associated\\ \small to $(A^{\ast},\cdot,-R_{\circ}^{\ast})$}\ar[r]^{\txt{\tiny deformation}}\ar@<-2pt>[u]|-{\phantom{\txt{\footnotesize Theorem}}}
   & *\txt{\small $\mathcal{O}$-operator $(\hat{r})^{\sharp}=(r^\sharp)_{\bf K}$ on\\ \small $(A_h,\circ_{h})$ associated\\ \small to $(A^{\star}_{h},\cdot_{h},-R_{\circ_{h}}^{\star},-L_{\circ_{h}}^{\star})$}\ar@<-2pt>[u]|-{\phantom{\txt{\footnotesize Theorem}}}\ar[r]^{\txt{\tiny QCL}}_{\txt{\tiny Theorem~\mref{invrb}}}
   & *\txt{\small  $\mathcal{O}$-operator $r^{\sharp}$ on\\ \small $(A,\{,\},\circ)$ associated\\ \small to $(A^{\ast},[,],\cdot,\rad_{\{,\}}^{\ast},-R_{\circ}^{\ast})$.}\ar@<-2pt>[u]|-{\phantom{\txt{\footnotesize Theorem}}}
}
\end{split}
   \end{equation}
\mlabel{pro-w1}
   \end{thm}
   \begin{proof}
    (\ref{w1-1}). Note that the topologically free $(A_h ,\circ_{h})$-bimodule  $(A^{\star}_{h},-R_{\circ_{h}}^{\star},-L_{\circ_{h}}^{\star})$ is
    the topological dual bimodule of the $(A_h ,\circ_{h})$-bimodule
  $(A_h ,L_{\circ_{h}},R_{\circ_h})$, and $(A_h ,L_{\circ_{h}},R_{\circ_h})$
  is an  $(A_h ,\circ_{h})$-bimodule deformation of the $(A,\circ)$-module
  $(A,L_\circ)$.
  Then by Proposition~\ref{dedu} and $R_{\circ}^{\ast}=L_{\circ}^{\ast}$, we
  obtain
\vspb
    $$-R_{\circ_{h}}^{\star}(x)\equiv-L_{\circ_{h}}^{\star}(x)\equiv -R_{\circ}^{\ast}(x)\mmod{h},  x\in A.
    \vspb
    $$
    By (\ref{eq-sh}), we have
$(\hat{\beta})^{\sharp}=(\beta^{\sharp})_{\bfK}.$
   Hence, for $a^{\ast},b^{\ast}\in A^{\ast}$, we have
\vspb
   \[a^{\ast}\cdot_h b^{\ast}=2R_{\circ_h}^{\star}((\hat{\beta})^{\sharp}(a^{\ast}))b^{\ast}=2R_{\circ_h}^{\star}((\beta^{\sharp})_{\bfK}(a^{\ast}))b^{\ast}\equiv 2R_{\circ}^{\ast}(\beta^{\sharp}(a^{\ast}))b^{\ast}\mmod{h}\equiv a^{\ast}\cdot b^{\ast}\mmod{h}.
   \vspb
   \]
     Thus the topologically free $(A,\circ_{h})$-bimodule algebra $(A^{\star}_{h},\cdot_{h},-R_{\circ_{h}}^{\star},-L_{\circ_{h}}^{\star})$ is an $(A,\circ_{h})$-bimodule algebra deformation of the $(A,\circ)$-module algebra $(A^{\ast},\cdot,-R_{\circ}^{\ast})$.

Let $(A^{\ast},[,]^{\prime},\cdot,\rho_{\{,\}}^{\prime},-R_{\circ}^{\ast})$ denote the \qcl of $(A^{\star}_{h},-R_{\circ_{h}}^{\star},-L_{\circ_{h}}^{\star})$.
    By Proposition \ref{cldu}, the \qcl
     of the $(A,\circ_{h})$-bimodule deformation $(A^{\star}_{h},-R_{\circ_{h}}^{\star},-L_{\circ_{h}}^{\star})$ is exactly
     the $(A,\{,\},\circ)$-module  $(A^{\ast},\cdot,\rad_{\{,\}}^{\ast},-R_{\circ}^{\ast})$. Thus $\rad_{\{,\}}^{\ast}=\rho_{\{,\}}^{\prime}$.
     By (\ref{lr}),
    $R_{\circ_{h}}^{\star}((\hat{\beta})^{\sharp}(a^{\star}))b^{\star}=L_{\circ_{h}}^{\star}((\hat{\beta})^{\sharp}(b^{\star}))a^{\star}$ for $a^{\star},b^{\star}\in A^{\star}_h$.
Then we deduce
\vspb
\begin{eqnarray*}
\frac{a^{\ast}\cdot_{h}b^{\ast}-b^{\ast}\cdot_{h}a^{\ast}}{h}&=&\frac{2R_{\circ_{h}}^{\star}((\hat{\beta})^{\sharp}(a^{\ast}))b^{\ast}-2R_{\circ_{h}}^{\star}((\hat{\beta})^{\sharp}(b^{\ast}))a^{\ast}}{h} \\
&=& \frac{2R_{\circ_{h}}^{\star}(\beta^{\sharp}(a^{\ast}))b^{\ast}-2L_{\circ_{h}}^{\star}(\beta^{\sharp}(a^{\ast}))b^{\ast}}{h} \\
&\equiv& -2\rad_{\{,\}}^{\ast}(\beta(a^{\ast}))b^{\ast}\mmod{h},\\
&\equiv& [a^{\ast},b^{\ast}]\mmod{h}\quad  a^{\ast},b^{\ast}\in A^{\ast}.
\vspb
\end{eqnarray*}
Hence $[a^{\ast},b^{\ast}]=[a^{\ast},b^{\ast}]^{\prime}$.
Therefore, the two $(A,\{,\},\circ)$-module Poisson algebras
$(A^{\ast},[,],\cdot$, $\rad_{\{,\}}^{\ast}$, $-R_{\circ}^{\ast})$
and
$(A^{\ast},[,]^{\prime},\cdot,\rho_{\{,\}}^{\prime},-R_{\circ}^{\ast})$
coincide.

Since $(\hr)^{\sharp}$ coincides with the scalar deformation $(r^{\sharp})_{\bf K}$ of $r^{\sharp}$ by Proposition~\ref{pro-add}, the statements in (\ref{w1-2}) and (\ref{w1-3}) follow.
\vspb
\end{proof}
\begin{rmk}
    Taking the tensor $r$ in Theorem~\ref{pro-w1} to be skew-symmetric leads to the following commutative diagram.
\begin{equation*}
\xymatrixcolsep{4pc}\xymatrixrowsep{2pc}
\xymatrix{
    *\txt{\small skew-symmetric\\ \small solution $r$ of\\ \small AYBE in $(A,\circ)$}\ar[r]^{\txt{\tiny deformation}}\ar@<-2pt>[d]&*\txt{\small skew-symmetric\\ solution $\hr$ of \\ AYBE in $(A_{h},\circ_{h})$}\ar[r]^{\txt{\tiny QCL}}\ar@<-2pt>[d]|-{\txt{\footnotesize }}&*\txt{\small skew-symmetric\\ \small solution $r$ of\\ \small PYBE in $(A,\{,\},\circ)$}\ar@<-2pt>[d]|-{\txt{\footnotesize }}\\
    *\txt{\small $\mathcal{O}$-operator $r^{\sharp}$ on\\ \small $(A,\circ)$ associated\\ \small to $(A^{\ast},-R_{\circ}^{\ast})$}\ar[r]^{\txt{\tiny deformation}}\ar@<-2pt>[u]
    & *\txt{\small  $\mathcal{O}$-operator $(\hr)^{\sharp}=(r^\sharp)_{\bf K}$ on\\ \small $(A_{h},\circ_{h})$ associated \\ \small to $(A^{\star}_{h},-R_{\circ_{h}}^{\star},-L_{\circ_{h}}^{\star})$}
    \ar[r]^{\txt{\tiny QCL}}\ar@<-2pt>[u]
    & *\txt{\small $\mathcal{O}$-operator $r^{\sharp}$ on \\ \small $(A,\{,\},\circ)$ associated\\ \small to $(A^{\ast},\rad_{\{,\}}^{\ast},-R_{\circ}^{\ast})$.}\ar@<-2pt>[u]
}
\end{equation*}
\vspb
\end{rmk}
\subsection{AYBE and PYBE via $\calo$-operators of weight zero and dendriform deformations}
\mlabel{ss:ybeopdend}
We first obtain solutions of PYBE induced by scalar deformations of $\mathcal{O}$-operators of weight zero. Then through the relation between dendriform algebras and $\calo$-operators, we also obtain solutions of the PYBE from deformations of dendriform algebras.

For vector spaces $V$ and $W$, we have the canonical embedding  \vspb
\begin{equation}
\eta: V\ot W \to (V\oplus W)\ot (V\oplus W),  \quad \sum_{i}v_i\ot w_i \mapsto \sum_{i}(v_i,0)\ot (0,w_i)\in (V\oplus W)\ot (V\oplus W).
\mlabel{eq:sumembed}
\vspb
\end{equation}
Similarly, we can  embed $V_h\hat{\ot}W_h$ into
$(V_h\oplus W_h)\hat{\ot}(V_h\oplus W_h)$. Then
identifying $V_h\oplus W_h$ with $(V\oplus W)_h$ by
Remark~\ref{ids}, we obtain the following $\bfK$-linear embedding:
\vspb
\begin{equation}\label{eq:hsumembed}
    \eta_h:V_h\hat{\ot}W_h\to (V\oplus W)_h\hat{\ot}(V\oplus W)_h,\quad \sum_{s\geq0}v_sh^s\hat{\ot}\sum_{l\geq0}w_lh^l \mapsto\sum_{s,l\geq0}(v_s,0)h^s\hat{\ot}(0,w_l)h^{l}.
\vspb
\end{equation}
\vspb

 \begin{pro}{\rm{\mcite{BGN2}}}
   Let $(A,\circ)$ be a finite-dimensional  associative algebra and $(V,l_{\circ},r_{\circ})$ be a finite-dimensional  $A$-bimodule. Let $T:V \rightarrow A$ be a linear map. Denote its isomorphic image in $A\ot V^{\ast}$ by $\tilde{T}$ and embed $\tilde{T}$
   into $(A\oplus V^{\ast})^{\ot 2}$ by $\eta$ defined by {\rm (\ref{eq:sumembed})}. Then $r=\eta(\tilde{T})-\sigma(\eta(\tilde{T}))$ is a skew-symmetric solution of the AYBE in the associative algebra $A \ltimes_{-r_{\circ}^{\ast},-l_{\circ}^{\ast}}V^{\ast}$
if and only if $T$ is an $\mathcal{O}$-operator on the associative
algebra $(A,\circ)$ associated to the $(A,\circ)$-bimodule $(V,l_{\circ},r_{\circ})$.
\mlabel{aybe-semi}
 \end{pro}
\vspb

 The topologically free associative algebra analogue of the above
 conclusion is given as follows by omitting the proof.
\begin{pro}
Let $A$ and $V$ be  finite-dimensional  vector spaces. Let $(A_{h},\circ_{h})$ be a topologically free associative algebra and $(V_{h},l_{\circ_{h}},r_{\circ_{h}})$ be a topologically free $A_{h}$-bimodule.
Let $T_h\in \mathrm{Hom}_{\bfK}(V_h,A_h)$ be a $\bfK$-linear map.
Embed $\tilde{T_h}\in A_h\otimes V^{\star}_h$
into $(A\oplus V^{\ast})_{h}\hat{\ot}(A\oplus V^{\ast})_{h}$ by $\eta_h$ defined by \eqref{eq:hsumembed}. Set that $r_h=\eta_h(\tilde{T_h})-\sigma_{\bfK}(\eta_h(\tilde{T_h}))$. Then $r_h$ is a skew-symmetric solution
    of AYBE in $A_{h} \ltimes_{-r_{\circ_{h}}^{\star},-l_{\circ_{h}}^{\star}} V_{h}^{\star}$ if and only if $T_{h}$ is an $\mathcal{O}$-operator on  $(A_{h},\circ_{h})$ associated to $(V_{h},l_{\circ_{h}},r_{\circ_{h}})$.
\mlabel{taybe-semi}
\end{pro}
There is also  a Poisson algebra analogue of Proposition
\ref{aybe-semi} as follows.

\begin{thm}{\rm{\mcite{LBS}}}
Let $(A,\{,\},\circ)$ be a finite-dimensional
 Poisson algebra and
$(V,\rho_{\{,\}},\rho_{\circ})$ be a finite-dimensional
 $(A,\{,\},\circ)$-module. Let $T:V
\rightarrow A$ be a linear map. Denote its isomorphic image in
$A\ot V^{\ast}$ by $\tilde{T}$ and embed $\tilde{T}$ into
$(A\oplus V^{\ast})^{\ot 2}$ by $\eta$ defined by
\eqref{eq:sumembed}. Then
$r=\eta(\tilde{T})-\sigma(\eta(\tilde{T}))$ is a skew-symmetric
solution of the PYBE in the Poisson algebra $A
\ltimes_{\rho_{\{,\}}^{\ast},-\rho_{\circ}^{\ast}}V^{\ast}$ if and
only if $T$ is an $\mathcal{O}$-operator on the Poisson algebra
$(A,\{,\},\circ)$ associated to the $(A,\{,\},\circ)$-module
$(V,\rho_{\{,\}},\rho_{\circ})$. \mlabel{pybe-semi}
\end{thm}
Let $(A,\circ)$ be a finite-dimensional commutative associative algebra and $(V,\rho_{\circ})$ be a finite
dimensional  $(A,\circ)$-module. Let
$(A_{h},\circ_{h})$ be an associative deformation of $(A,\circ)$
and $(A,\{,\},\circ)$ be the corresponding \qcl. Let
$(V_{h},l_{\circ_{h}},r_{\circ_{h}})$ be an $A_{h}$-bimodule
deformation of $(V,\rho_{\circ})$ and
$(V,\rho_{\{,\}},\rho_{\circ})$ be the corresponding \qcl, which
is an $(A,\{,\},\circ)$-module by Corollary \mref{cor-defmod}.
Then we have the following data:
\begin{itemize}
\item the $(A,\circ)$-module $(V^{\ast},-\rho_{\circ}^{\ast})$
which is the dual module of the $(A,\circ)$-module
$(V,\rho_{\circ})$ and a commutative associative algebra $A
\ltimes_{-\rho_{\circ}^{\ast}}V^{\ast}$; \item the topologically free
$(A_{h},\circ_{h})$-bimodule
$(V_{h}^{\star},-r_{\circ_{h}}^{\star},-l_{\circ_{h}}^{\star})$
which is the topological dual module of the
$(A_{h},\circ_{h})$-bimodule $(V_{h},l_{\circ_{h}},r_{\circ_{h}})$
and a topologically free associative algebra \\ $A_{h}
\ltimes_{-r_{\circ_{h}}^{\star},-l_{\circ_{h}}^{\star}}
V_{h}^{\star}$; \item the
 $(A,\{,\},\circ)$-module  $(V^{\ast},\rho_{\{,\}}^{\ast},-\rho_{\circ}^{\ast})$ which is the dual module of the  $(A,\{,\},\circ)$-module  $(V,\rho_{\{,\}},\rho_{\circ})$.
 and a Poisson algebra $A\ltimes_{\rho_{\{,\}}^{\ast},-\rho_{\circ}^{\ast}}V^{\ast}$.
\end{itemize}
 Let $T\in \mathrm{Hom}_{\bf k}(V,A)$ and $T_{\mathbf{K}}$ be defined by  (\ref{eq:TK}).
 Embed $\tilde{T}\in A\otimes V^{\ast}$
 into $(A\oplus V^{\ast})^{\ot 2}$ via $\eta$ defined by (\ref{eq:sumembed}). Then set $r:=\eta(\tilde{T})-\sigma(\eta(\tilde{T}))$. So $r$ is skew-symmetric.
 Denote the isomorphic images of $\tilde T$ and $r$ in $(A\oplus V^{\ast})_{h}\hat{\ot}(A\oplus V^{\ast})_h$ by $\hat{\tilde{T}}$ and $\hr$ respectively. We obtain
$\hr=\widehat{\eta(\tilde{T})}-\widehat{\sigma(\eta(\tilde{T}))}.$
 Embed $\widetilde{T_\bfK}\in A_h\otimes V^{\star}_h$
    into $(A\oplus V^{\ast})_{h}\hat{\ot}(A\oplus V^{\ast})_{h}$ via $\eta_h$ defined by (\ref{eq:hsumembed}).
Set
\vspb
$$r_h:=\eta_h(\widetilde{T_{\bfK}})-\sigma_{\bf K}(\eta_h(\widetilde{T_{\bfK}})).
\vspb
$$
 \begin{thm}\label{pro-skews}
    Keep the above assumptions and notations. We have the following results.
    \begin{enumerate}
        \item\label{skews1} The topologically free associative algebra $A_{h} \ltimes_{-r_{\circ_{h}}^{\star},-l_{\circ_{h}}^{\star}} V_{h}^{\star}$
        is an associative deformation of the commutative associative algebra
        $A \ltimes_{-\rho_{\circ}^{\ast}}V^{\ast}$ and the corresponding \qcl is exactly the Poisson algebra $A\ltimes_{\rho_{\{,\}}^{\ast},-\rho_{\circ}^{\ast}}V^{\ast}$.
       \item\label{skews2} Assume that $T$ and $T_{\bf K}$ are $\mathcal{O}$-operators on $(A,\circ)$ and $(A_{h},\circ_{h})$ associated to $(V,l_{\circ},r_{\circ})$ and $(V_{h},l_{\circ_{h}},r_{\circ_{h}})$ respectively. Starting with an $\mathcal{O}$-operator $T$ on $(A,\circ)$ associated to $(V,l_{\circ},r_{\circ})$, the left square of the following diagram commutes.
Moreover, starting from the $\mathcal{O}$-operator $T_{\bf K}$ on
$(A_{h},\circ_{h})$ associated to
$(V_{h},l_{\circ_{h}},r_{\circ_{h}})$, the right square of the
diagram commutes.
\vspb
 \end{enumerate}
{{\begin{displaymath}
  \xymatrixcolsep{5pc}\xymatrixrowsep{4pc}
  \xymatrix{
  *\txt{\small $\mathcal{O}$-operator $T$ on\\ \small $(A,\circ)$ associated \\ \small to $(V,\rho_{\circ})$}\ar[d]|-{\txt{\tiny Proposition \mref{aybe-semi}\\\tiny $r=\eta(\tilde{T})-\sigma(\eta(\tilde{T}))$}}\ar[r]^{\txt{\tiny deformation}}
    & *\txt{\small   $\mathcal{O}$-operator $T_{\mathbf{K}}$ on\\ \small $(A_{h},\circ_{h})$ associated\\ \small to $(V_{h},l_{h},r_{h})$}\ar[r]^{\txt{\tiny QCL}}_{\txt{\tiny Theorem \mref{invrb}
        }}\ar[d]|-{\txt{\tiny Proposition \mref{taybe-semi}\\\tiny$r_h=\eta_h(\widetilde{T_{\bfK}})-\sigma_{\bf K}(\eta_h(\widetilde{T_{\bfK}}))$ }}
   & *\txt{\small $\mathcal{O}$-operator $T$ on\\ \small $(A,\{,\},\circ)$ associated\\ \small to $(V,\rho_{\{,\}},\rho_{\circ})$}\ar[d]|-{\txt{\tiny Theorem \mref{pybe-semi}\\\tiny$r=\eta(\tilde{T})-\sigma(\eta(\tilde{T}))$}}\\
    *\txt{\small skew-symmetric\\ \small  solution $r$ of \\ \small AYBE in $A\ltimes_{-\rho_{\circ}^{\ast}}V^{\ast}$}\ar[r]^{\txt{\tiny deformation}}
   &*\txt{\small skew-symmetric\\ \small solution $r_h$ of\\ \small AYBE in $A_{h} \ltimes_{-r_{\circ_{h}}^{\star},-l_{\circ_{h}}^{\star}} V_{h}^{\star}$}\ar[r]^{\txt{\tiny QCL}}_{\txt{\tiny Theorem~\mref{inv-aybe}}}
   &*\txt{\small skew-symmetric\\ \small solution $r$ of\\ \small PYBE in $A\ltimes_{\rho_{\{,\}}^{\ast},-\rho_{\circ}^{\ast}}V^{\ast}$}
}
 \end{displaymath}}}
 \end{thm}
\begin{proof}
(\ref{skews1}).  Since the topologically free $(A_{h},\circ_{h})$-bimodule $(V_{h},l_{\circ_{h}},r_{\circ_{h}})$ is an $A_h$-bimodule deformation of the $(A,\circ)$-module $(V,\rho_{\circ})$, the topologically free
$(A_{h},\circ_{h})$-bimodule
$(V_{h}^{\star},-r_{\circ_{h}}^{\star},-l_{\circ_{h}}^{\star})$ is an $A_h$-bimodule deformation of the $(A,\circ)$-module $(V^{\ast},-\rho_{\circ}^{\ast})$ by Proposition~\ref{dedu}.
Thus $A_{h}
\ltimes_{-r_{\circ_{h}}^{\star},-l_{\circ_{h}}^{\star}}
V_{h}^{\star}$ is an associative deformation of
$A
\ltimes_{-\rho_{\circ}^{\ast}}V^{\ast}$.

By Proposition \ref{cldu},
  the \qcl corresponding to the $A_{h}$-bimodule deformation $(V_{h}^{\star},-r_{\circ_{h}}^{\star},-l_{\circ_{h}}^{\star})$ is exactly the $(A,\{,\},\circ)$-module $(V,\rho_{\{,\}},\rho_{\circ})$. Then
  we conclude that the corresponding \qcl of $A_{h}
\ltimes_{-r_{\circ_{h}}^{\star},-l_{\circ_{h}}^{\star}}
V_{h}^{\star}$ is exactly the Poisson algebra $A\ltimes_{\rho_{\{,\}}^{\ast},-\rho_{\circ}^{\ast}}V^{\ast}$ by Corollary \ref{cor-defmod}.

(\ref{skews2}). Since $T_{\bfK}$ is the scalar deformation of $T$,
we obtain
$\widetilde{T_{\bfK}}=\hat{\tilde{T}}$
by (\ref{eq-tk}).
Let $\{e_{1},\ldots,e_{n}\}$ be a basis of $V$ and
$\{e_{1}^{\ast},\ldots,e_{n}^{\ast}\}$ be the
dual basis of $V^{\ast}$, where $n=\mathrm{dim}\,V$.
So $\tilde{T}=\sum^{n}_{i=1}T(e_i)\otimes e_{i}^{\ast}$.
Then
\vspb
\begin{equation}\label{ehal}
\eta_h(\widetilde{T_{\bfK}})=\eta_h(\hat{\tilde{T}})=\sum^{n}_{i=1}(T(e_i),0) \hat{\otimes} (0,e_{i}^{\ast})=\widehat{\eta(\tilde{T})}.
\vspb
\end{equation}
 Also note that
\vspb
 \begin{equation}\label{sehal}
    \sigma_{\bfK}(\eta_h(\widetilde{T_{\bfK}}))=\sigma_{\bfK}(\eta_h(\hat{\tilde{T}}))=\sum^{n}_{i=1}  (0,e_{i}^{\ast})\hat{\otimes}(T(e_i),0) =\widehat{\sigma(\eta(\tilde{T}))}.
\vspb
 \end{equation}
Thus by (\ref{ehal}) and (\ref{sehal}), we obtain
\vspb
\[\eta_h(\widetilde{T_{\bfK}})=\widehat{\eta(\tilde{T})}\;\textrm{and}\;\sigma_{\bf K}(\eta_h(\widetilde{T_{\bfK}}))=\widehat{\sigma(\eta(\tilde{T}))},
\vspb
\]
respectively. Thus $r_h$ coincides with  the scalar deformation
$\hr$ of $r$, proving the conclusion.
\end{proof}
\vspb

Let $(A,\succ)$ be a finite-dimensional Zinbiel algebra, $(A_{h},\succ_{h},\prec_{h})$ be a dendriform deformation and $(A,\triangleright,\succ)$ be the \qcl pre-Poisson algebra. Define binary operations $\circ$ and $\{,\}$ on $A$ by
\vspb
$$x\circ y:=x\succ y+y\succ x,\;\; \{x,y\}:=x\triangleright
y-y\triangleright x,\;\; x,y\in A.$$ Define a binary operation
$\circ_{h}$ on $A_h$ by
$x_{h}\circ_{h}y_{h}:=x_{h}\succ_{h}
y_{h}+x_{h}\prec_{h}y_{h}$ for $x_{h},y_{h}\in A_{h}$. Denote the isomorphic image of $\mathrm{id}_{A}$ in $A\ot
A^{\ast}$ by $\widetilde{\mathrm{id}_A}$ and embed it into $(A\oplus
A^{\ast})^{\ot 2}$ by $\eta$ defined by (\ref{eq:sumembed}). Denote the isomorphic image of
$\mathrm{id}_{A_h}$ in $A_h\ot A^{\star}_h$ by
$\widetilde{\mathrm{id}_{A_h}}$ and embed
$\widetilde{\mathrm{id}_{A_h}}$ into $(A\oplus
A^{\ast})_{h}\hat{\ot}(A\oplus A^{\ast})_{h}$ by $\eta_h$ defined
by \eqref{eq:hsumembed}. Set
$$r:=\eta(\widetilde{\mathrm{id}_A})-\sigma(\eta(\widetilde{\mathrm{id}_A})),\;\;r_h:=\eta_h(\widetilde{\mathrm{id}_{A_h}})-\sigma_{\bf
K}(\eta_h(\widetilde{\mathrm{id}_{A_h}})).$$ Note that
$\mathrm{id}_{A_h}$ is exactly the scalar deformation of
$\mathrm{id}_{A}$.
\begin{cor}\label{cor:summary}
Keep the notations and assumptions as above.
    \begin{enumerate}
        \item\label{m1} The topologically free associative algebra  $A_{h} \ltimes_{-R_{\prec_{h}}^{\star},-L_{\succ_{h}}^{\star}} A_{h}^{\star}$ is an associative deformation of the commutative associative
        algebra $A\ltimes_{-L_{\succ}^{\ast}}A^{\ast}$, and the corresponding \qcl is exactly the Poisson algebra $A\ltimes_{L_{\triangleright}^{\ast},-L_{\succ}^{\ast}}A^{\ast}$.
        \item\label{m2} Starting from a Zinbiel algebra $(A,\succ)$, the left half of the following diagram commutes. Moreover, starting from the topologically free dendriform algebra $(A_{h},\succ_{h},\prec_{h})$,
        the right half of the following diagram commutes.
    \end{enumerate}
 \begin{equation}\label{maindiag}
\begin{split}
\xymatrixcolsep{5pc}\xymatrixrowsep{3pc}
\xymatrix{
*\txt{\small Zinbiel algebra\\ \small $(A,\succ)$}
\ar[r]^{\txt{\tiny deformation}}\ar[d]|-{\txt{\tiny
Proposition~\ref{pro-ct}}}& *\txt{\small topologically free\\
\small dendriform algebra\\ \small
$(A_{h},\succ_{h},\prec_{h})$}\ar[r]^{\txt{ \tiny
\qcl}}_{\txt{\tiny
Corollary~\mref{cor-defdendri}}}\ar[d]|-{\txt{\tiny
Proposition~\ref{pro-ct}}}
& *\txt{\small pre-Poisson algebra\\\small $(A,\triangleright,\succ)$}\ar[d]|-{\txt{\tiny  Proposition~\ref{pro-cp}}}\\
*\txt{\small $\mathcal{O}$-operator $\mathrm{id}_{A}$ on\\ \small $(A,\circ)$ associated\\ \small to $(A,L_{\succ})$}\ar[r]^{\txt{\tiny  deformation}}\ar[d]|-{\txt{\tiny  Proposition~\mref{aybe-semi}\\ \tiny $r:=\eta(\widetilde{\mathrm{id}_A})-\sigma(\eta(\widetilde{\mathrm{id}_A}))$ }}&*\txt{\small $\mathcal{O}$-operator $\mathrm{id}_{A_h}$ on\\ \small $(A_h,\circ_h)$ associated\\ \small to $(A_h,L_{\succ_h},R_{\prec_h})$}\ar[r]^{\txt{\tiny QCL}}_{\txt{\tiny  Theorem \mref{invrb}
}}\ar[d]|-{\txt{\tiny  Proposition~\mref{taybe-semi}\\\tiny$r_h:=\eta_h(\widetilde{\mathrm{id}_{A_h}})-\sigma_{\bf K}(\eta_h(\widetilde{\mathrm{id}_{A_h}}))$}}&*\txt{\small $\mathcal{O}$-operator $\mathrm{id}_{A}$ on\\ \small $(A,\{,\},\circ)$ associated\\ \small to $(A,L_{\triangleright},L_{\succ})$}\ar[d]|-{\txt{\tiny  Theorem~\mref{pybe-semi}\\\tiny  $r:=\eta(\widetilde{\mathrm{id}_A})-\sigma(\eta(\widetilde{\mathrm{id}_A}))$}}\\
*\txt{\small skew-symmetric\\\small solution  $r$ of\\ \small AYBE in $A\ltimes_{-L_{\succ}^{\ast}}A^{\ast}$}\ar[r]^{\txt{\tiny  deformation}}
&*\txt{\small skew-symmetric\\\small solution $r$ of\\\small AYBE in $A_{h} \ltimes_{-R_{\prec_{h}}^{\star},-L_{\succ_{h}}^{\star}} A_{h}^{\star}$}\ar[r]^{\txt{\tiny QCL}}_{\txt{\tiny  Theorem~\mref{inv-aybe}}}
&*\txt{\small skew-symmetric\\\small solution  $r$ of\\\small PYBE in $A\ltimes_{L_{\triangleright}^{\ast},-L_{\succ}^{\ast}}A^{\ast}$.}}
\end{split}
\end{equation}
\end{cor}
\begin{proof}
     It follows from Theorems~\ref{pro-diaid}
and \ref{pro-skews}.
\end{proof}
\subsection{Solutions of the PYBE from tridendriform deformations}
\mlabel{ss:ybetrid}

Tridendriform algebras correspond to $A$-bimodule algebras equipped with
the $\calo$-operator $\id_A$ of weight 1 and hence give rise to
solutions of the AYBE as in the following result.

\begin{pro}{\rm{\cite[Corollary~6.19]{BGN3}}}
Let $(A,\succ,\prec,\cdot)$ be a finite-dimensional tridendriform algebra and $A^{\oplus 2}$
be the underlying vector space of the associative algebra $(\hat
A:=A\ltimes_{L_{\succ},R_{\prec}}A,\odot)$. Define the operators
$\alpha_1,\alpha_2,\alpha_3,\alpha_4\in
\mathrm{End}_{\bfk}(A^{\oplus 2})$ by \vspb
\begin{equation}
\alpha_{1}(x,y)=(-x+y,0), \ \alpha_{2}(x,y)=-(y,y),\ \alpha_{3}(x,y)=-(x,x),\ \ \alpha_{4}(x,y)=(0,x-y),\  x, y\in A.
\mlabel{alpha}
\vspb
\end{equation}
Denote the identity operator on $A^{\oplus 2}$ by $\mathrm{id}$.
For $i\in \{1,2,3,4\}$, embed $\tilde{\alpha_i}$ and
$\widetilde{\mathrm{id}}$ into $(A^{\oplus 2}\oplus (A^{\oplus
2})^{\ast})^{\otimes^2}$ by the embedding $\eta$ defined
by \meqref{eq:sumembed}. Then
$\eta(\tilde{\alpha_i})-\sigma(\eta(\tilde{\alpha_i}))+
\eta(\widetilde{\mathrm{id}})$ and
$\eta(\tilde{\alpha_i})-\sigma(\eta(\tilde{\alpha_i}))-
\sigma(\eta(\widetilde{\mathrm{id}}))$
    are solutions of the AYBE in the associative algebra $\hat A\ltimes_{-R_{\odot}^{\ast},-L_{\odot}^{\ast}}(A^{\oplus 2})^{\ast}$.
\mlabel{tri-aybe}
\end{pro}
\begin{rmk}\label{rmk-rbaybe}
The above constructions of solutions of the AYBE are special cases of more general
    results where the constructions of solutions of the AYBE utilize Rota-Baxter operators (see \cite[Proposition~3.9(e)]{BGN3}).
    In fact, since $\mathrm{id}_A$ is an invertible $\mathcal{O}$-operator on $(A,\circ)$ of weight $1$ associated to $(A,\cdot,L_{\succ},R_{\prec})$,
    each $\alpha_i$ is a Rota-Baxter operator of weight $1$ on $(\hat A=A\ltimes_{L_{\succ},R_{\prec}}A,\odot)$ (see  \cite[Propositions~6.1 and 6.8]{BGN3}).
\end{rmk}
The following result gives the topologically free tridendriform
algebra analogue of Proposition \ref{tri-aybe} and can be proved in a
similar manner (see the proof of \cite[Corollary 6.19]{BGN3}).
\begin{pro}\label{tri-taybe}
Let $A$ be a finite-dimensional vector
space and $(A_h,\succ_{h},\prec_{h},\cdot_{h})$ be a topologically
free tridendriform algebra. Denote the underlying topologically
free $\bf K$-module of the topologically free associative algebra
$\hat A_h:=(A_{h}\ltimes_{L_{\succ_h},R_{\prec_h}}A_{h},\odot_h)$
by $(A^{\oplus 2})_{h}$. Let
$\alpha_{1,h},\alpha_{2,h},\alpha_{3,h},\alpha_{4,h}\in
\mathrm{End}_{\bfK}((A^{\oplus 2})_{h})$ be the operators defined
by \vspb
\begin{eqnarray*}
&&\alpha_{1,h}(x_h,y_h)=(-x_h+y_h,0), \ \alpha_{2,h}(x_h,y_h)=-(y_h,y_h),\\ &&\alpha_{3,h}(x_h,y_h)=-(x_h,x_h),\ \ \alpha_{4,h}(x_h,y_h)=(0,x_h-y_h),\  x_h, y_h\in A_h.
\mlabel{alpha_h}
\vspb
\end{eqnarray*}
Denote the identity operator on $(A^{\oplus 2})_{h}$ by
$\mathrm{id}_h$. For $i\in \{1,2,3,4\}$, embed
$\widetilde{\alpha_{i,h}}$ and $\widetilde{\mathrm{id}_h}$ into
$(A^{\oplus 2}\oplus (A^{\oplus
2})^{\ast})_h\hat{\otimes}(A^{\oplus 2}\oplus (A^{\oplus
2})^{\ast})_{h}$ by the embedding $\eta_h$ defined by
\eqref{eq:hsumembed}. Then \vspb
$$\eta_h(\widetilde{\alpha_{i,h}})-\sigma_{\bf K}(\eta_h(\widetilde{\alpha_{i,h}}))+
\eta_h(\widetilde{\mathrm{id}_h})\; \textrm{and} \;\eta_h(\widetilde{\alpha_{i,h}})-\sigma_{\bf K}(\eta_h(\widetilde{\alpha_{i,h}}))-
\sigma_{\bf K}(\eta_h(\widetilde{\mathrm{id}_h}))
\vspb
$$
are solutions of the AYBE in $\hat
A_h\ltimes_{-R_{\odot_{h}}^{\star},-L_{\odot_{h}}^{\star}}(A^{\oplus
2})^{\star}_{h}$.
\end{pro}

Let $(\mathfrak{g},\{,\})$ be a Lie algebra. Taking the operad of Lie algebras in Definition~\mref{de:pbimod}, we recover the notion of a $\mathfrak{g}$-module Lie algebra~\cite{BGN1,Lue}.
Let $(\mathfrak{h},[,],\rho_{\{,\}})$ be a $\mathfrak{g}$-module Lie algebra. Then taking the operad of Lie algebras in Definition~\mref{de:oop}, we recover the notion of an $\mathcal{O}$-operator of weight $\lambda$
on $\mathfrak{g}$ associated to
$(\mathfrak{h},[,],\rho_{\{,\}})$ \cite{BGN1}. A linear map $T\in\mathrm{End}_{\bfk}(\mathfrak{g})$ is called a Rota-Baxter operator of weight $\lambda$ on $\mathfrak{g}$ if it satisfies (\ref{rbl}).

\begin{thm}
Let $(A,[,],\triangleright,\succ,\cdot)$ be a finite-dimensional post-Poisson algebra. Denote the
underlying vector space of the Poisson algebra
$(\check{A}:=A\ltimes_{L_{\triangleright},L_{\succ}}A,\llbracket,\rrbracket,\odot)$
by $A^{\oplus 2}$. Let
$\alpha_i$,$\mathrm{id},\tilde{\alpha_i},\widetilde{\mathrm{id}}$
and $\eta$ be as defined in Proposition \ref{tri-aybe}. Then
$\eta(\tilde{\alpha_i})-\sigma(\eta(\tilde{\alpha_i}))+
\eta(\widetilde{\mathrm{id}})$ and
$\eta(\tilde{\alpha_i})-\sigma(\eta(\tilde{\alpha_i}))-
\sigma(\eta(\widetilde{\mathrm{id}}))$ are solutions of the PYBE
in the Poisson algebra
$\check{A}\ltimes_{\rad_{\llbracket,\rrbracket}^{\ast},-R_{\odot}^{\ast}}(A^{\oplus
2})^{\ast}$. \mlabel{post-pybe}
\end{thm}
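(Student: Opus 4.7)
The plan is to prove a direct Poisson analog of Theorem~\ref{tri-aybe}, using the same Rota-Baxter operator strategy alluded to in Remark~\ref{rmk-rbaybe}. By Proposition~\ref{pro-cp}, the post-Poisson algebra $(A,[,],\triangleright,\succ,\cdot)$ encodes the Poisson algebra $(A,\{,\},\circ)$, the $(A,\{,\},\circ)$-module Poisson algebra $(A,[,],\cdot,L_{\triangleright},L_{\succ})$, and the distinguished $\mathcal{O}$-operator $\mathrm{id}_A$ of weight $1$; the corresponding semidirect-product Poisson algebra defined via~(\ref{lrbrp}) and~(\ref{odotp}) is exactly $\check A$.

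First, I would establish the Poisson-algebra version of Remark~\ref{rmk-rbaybe}: each $\alpha_i$ in~(\ref{alpha}) is a Rota-Baxter operator of weight $1$ on $\check A$, meaning that~(\ref{rbl}) together with its commutative-associative counterpart is satisfied simultaneously for the pair $(\llbracket,\rrbracket,\odot)$. The proof in the associative case via~\cite[Propositions~6.1 and~6.8]{BGN3} is operad-agnostic: the operators $\alpha_i$ are constructed from the graph and inverse of the invertible $\mathcal{O}$-operator $\mathrm{id}_A$, and their Rota-Baxter property on the semidirect product is a formal consequence of the $\mathcal{O}$-operator identity~(\ref{ARRBO}). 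Applying the same formal rewriting to both the Lie and commutative-associative components of $\check A$ yields the Poisson Rota-Baxter identities for each $\alpha_i$.

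Second, I would invoke the Poisson analog of~\cite[Proposition~3.9(e)]{BGN3}: for any Rota-Baxter operator $R$ of weight $1$ on a finite-dimensional Poisson algebra $(P,\{,\},\circ)$, the tensors $\eta(\tilde R)-\sigma(\eta(\tilde R))+\eta(\widetilde{\mathrm{id}_P})$ and $\eta(\tilde R)-\sigma(\eta(\tilde R))-\sigma(\eta(\widetilde{\mathrm{id}_P}))$ are solutions of the PYBE in $P\ltimes_{\rad_{\{,\}}^{\ast},-R_{\circ}^{\ast}}P^{\ast}$. Since the PYBE in~(\ref{pybe}) is by definition the conjunction of the AYBE and the CYBE, this reduces to two checks: the AYBE on the commutative-associative component $\circ$, which is exactly~\cite[Proposition~3.9(e)]{BGN3} applied to the Rota-Baxter operator $R$ on $(P,\circ)$; and the CYBE on the Lie component $\{,\}$, which is the classical analog for Rota-Baxter Lie algebras in the same circle of ideas (see~\cite{BGN1}). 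Setting $P=\check A$ and $R=\alpha_i$ then yields the announced PYBE solutions.

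The main obstacle is to verify that the AYBE and CYBE pieces land in the \emph{same} Poisson module, namely $\check A\ltimes_{\rad_{\llbracket,\rrbracket}^{\ast},-R_{\odot}^{\ast}}(A^{\oplus 2})^{\ast}$. This follows because the symmetric part of the constructed tensor is invariant under $\check A$ in the Poisson sense, which by Theorem~\ref{pw1} forces the induced dual Poisson module structure on $(A^{\oplus 2})^{\ast}$ to be exactly $(\rad_{\llbracket,\rrbracket}^{\ast},-R_{\odot}^{\ast})$. Consequently, both the AYBE and CYBE halves of the Rota-Baxter construction for $\alpha_i$ are tested against the same module structure, so together they certify a genuine solution of the PYBE.
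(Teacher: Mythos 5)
Your proposal is correct and follows essentially the same route as the paper: the paper likewise splits the PYBE into its AYBE part, obtained by applying Theorem~\ref{tri-aybe} to the underlying commutative tridendriform algebra $(A,\succ,\cdot)$, and its CYBE part, obtained by checking that each $\alpha_i$ is a Rota-Baxter operator of weight $1$ on the Lie algebra $A\ltimes_{L_{\triangleright}}A$ (via \cite[Proposition 4.1]{BGN1} for $\alpha_1,\alpha_2$ and a direct verification for $\alpha_3,\alpha_4$) and then invoking \cite[Corollary 2.16(d)]{BGN1}. The only cosmetic difference is that you package the two halves as a single ``Poisson Rota-Baxter'' statement before splitting, whereas the paper cites the already-proved associative result wholesale and carries out the Rota-Baxter argument only on the Lie component; the compatibility of the two ambient semidirect-product structures that you worry about at the end is, as you conclude, just the observation that the associative and Lie parts of $\check{A}\ltimes_{\rad_{\llbracket,\rrbracket}^{\ast},-R_{\odot}^{\ast}}(A^{\oplus 2})^{\ast}$ are exactly the two semidirect products in which the AYBE and CYBE are verified.
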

  \begin{proof}
    Since $(A,\succ,\cdot)$ is a finite-dimensional tridendriform algebra, Proposition \ref{tri-aybe} shows that $\eta(\tilde{\alpha_i})-\sigma(\eta(\tilde{\alpha_i}))+ \eta(\widetilde{\mathrm{id}})$ and $\eta(\tilde{\alpha_i})-\sigma(\eta(\tilde{\alpha_i}))- \sigma(\eta(\widetilde{\mathrm{id}}))$ are solutions of the AYBE in $(A\ltimes_{L_{\succ}}A)\ltimes_{-R_{\odot}^{\ast}}(A^{\oplus 2})^{\ast}$.

 By Proposition \ref{pro-cp},
$(A,[,],L_{\triangleright})$ is an $(A,\{,\})$-module Lie algebra for $\{,\}$ defined in (\ref{eq:summ}), and $\mathrm{id}_{A}$
is an $\mathcal{O}$-operator of weight $1$ on $(A,\{,\})$
associated to $(A,[,],L_{\triangleright})$. Thus by
\cite[Proposition 4.1]{BGN1}, $\alpha_{1}$ and $\alpha_{2}$
in \meqref{alpha} are Rota-Baxter operators of weight $1$ on the Lie algebra
$A\ltimes_{L_{\triangleright}}A$ respectively. It is
straightforward to show that $\alpha_{3}$ and $\alpha_{4}$ are
also Rota-Baxter
 operators of weight $1$ on the Lie algebra
 $A\ltimes_{L_{\triangleright}}A$ respectively. That is, $\alpha_i$ is a Rota-Baxter
 operator of weight $1$ on the Lie algebra
 $A\ltimes_{L_{\triangleright}}A$ for any
 $\alpha_i\in\{\alpha_1,\alpha_2,\alpha_3,\alpha_4\}$.
 Thus by \cite[Corollary 2.16(d)]{BGN1},
$\eta(\tilde{\alpha_i})-\sigma(\eta(\tilde{\alpha_i}))+ \eta(\widetilde{\mathrm{id}})$ and $\eta(\tilde{\alpha_i})-\sigma(\eta(\tilde{\alpha_i}))- \sigma(\eta(\widetilde{\mathrm{id}}))$
 are also solutions of the CYBE in  $(A\ltimes_{L_{\triangleright}}A)\ltimes_{\rad_{\llbracket,\rrbracket}^{\ast}}(A^{\oplus 2})^{\ast}$.
\end{proof}

By a straightforward calculation, we obtain the following result.
\begin{lem}
Let $A$ be a finite-dimensional vector space and
$(A_{h},\circ_{h})$ be a topologically free associative algebra.
Denote the isomorphic image of $\mathrm{id}_{A}$ in $A^{\ast}\ot
A$ by  $\widetilde{\mathrm{id}_A}$ and embed it to $(A\oplus
A^{\ast})^{\otimes {2}}$ via $\eta$ defined by
\meqref{eq:sumembed}. Denote the isomorphic image of
$\eta(\widetilde{\mathrm{id}_A})$ under the linear embedding of
$(A\oplus A^{\ast})^{\otimes 2}$ into $(A\oplus
A^{\ast})_h\hat{\otimes}(A\oplus A^{\ast})_{h}$ by
$\widehat{\eta(\widetilde{\mathrm{id}_{A}})}$. Then the symmetric
part of $\widehat{\eta(\widetilde{\mathrm{id}_{A}})}$ is
$(A_{h}\ltimes_{-R_{\circ_{h}}^{\star},-L_{\circ_{h}}^{\star}}
A^{\star}_{h})$-invariant. \mlabel{idinv}
\end{lem}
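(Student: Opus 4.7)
The plan is to reduce the invariance condition to a short basis-level computation, which is essentially the standard fact that the canonical element $\sum_i(e_i\otimes e_i^\star + e_i^\star\otimes e_i)$ is invariant for the dual-bimodule semidirect product construction, now transported verbatim to the topologically-free setting. First I would fix a basis $\{e_i\}_{i=1}^n$ of the finite-dimensional space $A$, which becomes a $\bfK$-basis of $A_h$, and let $\{e_i^\star\}$ denote the corresponding dual $\bfK$-basis of $A^\star_h$. Tracking the two embeddings (first $\eta$ into $(A\oplus A^\star)^{\otimes 2}$, then the scalar deformation into $(A\oplus A^\star)_h\hat{\otimes}(A\oplus A^\star)_h$), one writes
\[
\widehat{\eta(\widetilde{\mathrm{id}_A})}=\sum_i(0,e_i^\star)\hat{\otimes}(e_i,0),\qquad \hat{\beta}=\tfrac12\sum_i\Bigl[(0,e_i^\star)\hat{\otimes}(e_i,0)+(e_i,0)\hat{\otimes}(0,e_i^\star)\Bigr].
\]

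Since the invariance identity
\[
\bigl(\mathrm{id}\,\hat{\otimes}\,L_{\odot_h}(X) - R_{\odot_h}(X)\,\hat{\otimes}\,\mathrm{id}\bigr)\hat{\beta} = 0
\]
is $\bfK$-linear in $X\in A_h\oplus A^\star_h$, it suffices to verify it separately for $X=(x_h,0)$ and $X=(0,f_h)$. The multiplication in the semidirect product is
\[
(x_h,f_h)\odot_h(y_h,g_h) = \bigl(x_h\circ_h y_h,\,-R^\star_{\circ_h}(x_h)g_h - L^\star_{\circ_h}(y_h)f_h\bigr).
\]
For $X=(x_h,0)$ each of the two tensor-type summands of $\hat{\beta}$ (of types $A^\star\hat{\otimes}A$ and $A\hat{\otimes}A^\star$) produces one term on each side of the invariance identity. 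After expanding $x_h\circ_h e_i$ and $e_i\circ_h x_h$ in $\{e_j\}$ and applying the defining relations $\langle -L^\star_{\circ_h}(x_h)e_i^\star,e_j\rangle_\bfK = \langle e_i^\star,x_h\circ_h e_j\rangle_\bfK$ and $\langle -R^\star_{\circ_h}(x_h)e_i^\star,e_j\rangle_\bfK = \langle e_i^\star,e_j\circ_h x_h\rangle_\bfK$, both summand types match after the evident index swap $i\leftrightarrow j$. For $X=(0,f_h)$ the verification is shorter because $(0,f_h)\odot_h(0,e_i^\star) = (0,e_i^\star)\odot_h(0,f_h) = 0$ in the semidirect product, so only the pairings with $(e_i,0)$ survive, and both sides reduce to $\tfrac12\sum_{i,j}\langle f_h,e_i\circ_h e_j\rangle_\bfK\,(0,e_i^\star)\hat{\otimes}(0,e_j^\star)$ after expansion in the dual basis and relabeling.

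I expect no real obstacle beyond careful index bookkeeping. The argument is the verbatim topologically-free analogue of the classical statement that the canonical Casimir of the evaluation pairing $A\otimes A^\star\to\bfk$ is invariant for the Drinfeld-double-style semidirect product $A\ltimes_{-R^\star_\circ,-L^\star_\circ}A^\star$; only the sign convention $l=-R^\star_{\circ_h}$, $r=-L^\star_{\circ_h}$ in the dual bimodule and the identification $\widetilde{\mathrm{id}_A}=\sum_i e_i^\star\otimes e_i$ (which is a genuine finite sum because $\dim A<\infty$) need to be tracked.
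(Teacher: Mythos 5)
Your proof is correct and takes essentially the same route as the paper's: a direct basis computation verifying the invariance identity for the canonical symmetric tensor $\sum_i\big[(e_i,0)\hat{\otimes}(0,e_i^{\ast})+(0,e_i^{\ast})\hat{\otimes}(e_i,0)\big]$, using the defining adjoint relations for $L^{\star}_{\circ_h}$ and $R^{\star}_{\circ_h}$. The only organizational difference is that you split the test element into $(x_h,0)$ and $(0,f_h)$ by $\bfK$-linearity (so that each piece cancels on its own), whereas the paper evaluates at a general $(x_h,a^{\star})$ and observes that the two halves of the symmetric tensor leave residuals $\pm\sum_j(0,e_j^{\ast})\hat{\otimes}(0,L^{\star}_{\circ_h}(e_j)a^{\star})$ that cancel upon addition; both bookkeepings check out.
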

\vspb
Let $(A,\succ,\cdot)$ be a finite-dimensional commutative tridendriform algebra,
$(A,\succ_{h},\prec_{h},\cdot_{h})$ be a tridendriform
deformation and $(A,\triangleright,[,],\succ,\cdot)$ be its \qcl
which is a post-Poisson algebra. Define binary operations $\circ$
and $\{,\}$ on $A$ by (\ref{eq:summ}). Define a binary operation
$\circ_{h}$ on $A_h$ by (\ref{eq:sumh}). Thus by
Theorem~\ref{pro-diaid}, the topologically free
$(A_{h},\circ_{h})$-bimodule algebra $(A_{h},\cdot
    _{h},L_{\succ_{h}},R_{\prec_{h}})$ is a deformation of the $(A,\circ)$-module algebra $(A,\cdot,L_{\succ})$ and its \qcl is the
$(A,\{,\},\circ)$-module Poisson algebra
$(A,[,],\cdot,L_{\triangleright},L_{\succ})$. So we have the
following data:
\begin{itemize}
\item  two commutative associative algebras
$(\hat{A}:=A\ltimes_{L_{\succ}}A,\odot)$ and
$\hat{A}\ltimes_{-R_{\odot}^{\ast}}(A^{\oplus 2})^{\ast}$;
\item
two topologically free associative algebras
$(\hat{A}_{h}:=A_{h}\ltimes_{L_{\succ_{h}},R_{\prec_{h}}}A_{h},\odot_{h})$
and
$\hat{A}_{h}\ltimes_{-R_{\odot_{h}}^{\star},-L_{\odot_{h}}^{\star}}(A^{\oplus
2})^{\star}_{h}$;
\item two Poisson algebras
$(\check{A}:=A\ltimes_{L_{\triangleright},L_{\succ}}A,\llbracket,\rrbracket,\odot)$
and
$\check{A}\ltimes_{\rad_{\llbracket,\rrbracket}^{\ast},-R_{\odot}^{\ast}}(A^{\oplus
2})^{\ast}$,
\end{itemize}
where $A^{\oplus 2}$ is the underlying vector space of
$A\ltimes_{L_{\succ}}A$. Fix an $i\in\{1,2,3,4\}$.
 Let $\alpha_i$, $\mathrm{id}$ and $\eta$ be the linear maps and $\tilde{\alpha_i}$ and
 $\widetilde{\mathrm{id}}$ be tensors given in Proposition
\ref{tri-aybe}. Let $\alpha_{i,h}$, $\mathrm{id}_h$ and $\eta_h$ be the $\bfK$-linear maps and $\widetilde{\alpha_{i,h}}$ and
$\tilde{\mathrm{id}_h}$ be tensors given in Proposition
\ref{tri-taybe}. 

\begin{thm}
Keep the above assumptions and notations.
\begin{enumerate}
\item\label{w2.1}  The topologically free associative algebra $\hat{A}_{h}\ltimes_{-R_{\odot_{h}}^{\star},-L_{\odot_{h}}^{\star}}(A^{\oplus 2})^{\star}_{h}$ is an associative deformation of the commutative associative algebra $\hat{A}\ltimes_{-R_{\odot}^{\ast}}(A^{\oplus 2})^{\ast}$ and the corresponding \qcl is exactly the Poisson algebra $\check{A}\ltimes_{\rad_{\llbracket,\rrbracket}^{\ast},-R_{\odot}^{\ast}}(A^{\oplus 2})^{\ast}$.
\item
Starting with a commutative tridendriform algebra $(A,\succ,\cdot)$, the left square of the following diagram commutes.
Moreover, starting from a topologically free tridendriform algebra $(A_{h},\succ_{h},\prec_{h},\cdot_{h})$, the right square of the following diagram commutes.
\mlabel{w2.3}
\end{enumerate}
\vspa
{\begin{displaymath}
 \xymatrixcolsep{6pc}\xymatrixrowsep{5pc}
 \xymatrix{
  *\txt{\small commutative\\ \small tridendriform\\ \small algebra $(A,\succ,\cdot)$}
   \ar[r]^{\txt{\tiny deformation}}\ar[d]|-{\txt{\tiny Proposition \mref{tri-aybe}\\\tiny $r=\eta(\tilde{\alpha_i})-\sigma(\eta(\tilde{\alpha_i}))+
\eta(\widetilde{\mathrm{id}})$\\\tiny($r=
\eta(\tilde{\alpha_i})-\sigma(\eta(\tilde{\alpha_i}))-
\sigma(\eta(\widetilde{\mathrm{id}})))$}}& *\txt{\small topologically free\\ \small tridendriform algebra\\ \small $(A_{h},\succ_{h},\prec_{h},\cdot_{h})$}\ar[r]^{\txt{\tiny \qcl}}_{\txt{\tiny Theorem \mref{mainthm}}}\ar[d]|-{\txt{\tiny Proposition \mref{tri-taybe}\\\tiny$r_h=\eta_h(\widetilde{\alpha_{i,h}})-\sigma_{\bf K}(\eta_h(\widetilde{\alpha_{i,h}}))+
\eta_h(\widetilde{\mathrm{id}_h})$\\\tiny($r_h=\eta_h(\widetilde{\alpha_{i,h}})-\sigma_{\bf K}(\eta_h(\widetilde{\alpha_{i,h}})-
\eta_h(\widetilde{\mathrm{id}_h}))$)}}
   & *\txt{\small post-Poisson\\ \small algebra $(A,\{,\},\triangleright,\succ,\cdot)$}\ar[d]|-{\txt{\tiny Theorem \mref{post-pybe}\\\tiny$r=\eta(\tilde{\alpha_i})-\sigma(\eta(\tilde{\alpha_i}))+
\eta(\widetilde{\mathrm{id}})$\\\tiny($r=
\eta(\tilde{\alpha_i})-\sigma(\eta(\tilde{\alpha_i}))-
\sigma(\eta(\widetilde{\mathrm{id}})))$}}\\
   *\txt{\small solution $r$ of \\ \small AYBE in \\ \small $\hat{A}\ltimes_{-R_{\odot}^{\ast}}(A^{\oplus 2})^{\ast}$}\ar[r]^{\txt{\tiny deformation}}
   &*\txt{\small  solution $r_h$ of\\ \small AYBE in \\ \small  $\hat{A}_{h} \ltimes_{-R_{\odot_{h}}^{\star},-L_{\odot_{h}}^{\star}}(A^{\oplus 2})^{\star}_{h}$}\ar[r]^{\txt{\tiny QCL}}_{\txt{\tiny Lemma \ref{idinv}\\\tiny  Theorem~\mref{inv-aybe}}}
    &*\txt{\small solution $r$ of\\ \small PYBE in\\ \small  $\check{A}\ltimes_{\rad_{\llbracket,\rrbracket}^{\ast},-R_{\odot}^{\ast}}(A^{\oplus 2})^{\ast}$.}}
 \end{displaymath}}
\mlabel{pro-w2}
\vspb
\end{thm}
\begin{proof}(\ref{w2.1}). The proof is similar to the one in the proof of Theorem~\ref{pro-skews}~(\ref{skews1}).

Note that
\vspb
$$\alpha_{i,h}((x,y))=\alpha_{i}((x,y))=(\alpha_{i})_\bfK((x,y)),\;\forall x,y\in A.$$
Since $\alpha_{i,h}$ and $(\alpha_{i})_\bfK$ are in $\mathrm{End}_{\bfK}((A^{\oplus 2})_h)$ and their restrictions to $A^{\oplus2}$ coincide, we conclude that
 $\alpha_{i,h}$ coincides with $(\alpha_{i})_\bfK$.
Thus we obtain
$\widetilde{\alpha_{i,h}}=\hat{\tilde{\alpha_{i}}}$
by (\ref{eq-tk}).
Then similar to (\ref{ehal}), we have
\vspb
\begin{equation}\label{ehi}
\eta_h(\widetilde{\alpha_{i,h}})=\widehat{\eta(\tilde{\alpha_i})},\;\;\eta_h(\widetilde{\mathrm{id}}_h)=\widehat{\eta(\widetilde{\mathrm{id}})}.
\vspb
\end{equation}
 Also similar to (\ref{sehal}), we obtain
\vspb
 \begin{equation}\label{sehi}
    \sigma_{\bfK}(\eta_h(\widetilde{\alpha_{i,h}}))=\widehat{\sigma(\eta(\tilde{\alpha_i}))},\;\;\sigma_{\bfK}(\eta_h(\widetilde{\mathrm{id}}_h))=\widehat{\sigma(\eta(\tilde{\mathrm{id}}))}.
    \end{equation}
\vspb
 Thus we have
\vspb
 \begin{eqnarray*}
     \eta_h(\widetilde{\alpha_{i,h}})-\sigma_{\bf K}(\eta_h(\widetilde{\alpha_{i,h}}))+
\eta_h(\widetilde{\mathrm{id}_h})&=&\widehat{\eta(\widetilde{\alpha_i})}-\widehat{\sigma(\eta(\widetilde{\alpha_i}))}+
\widehat{\eta(\tilde{\mathrm{id}})},\\ \eta_h(\widetilde{\alpha_{i,h}})-\sigma_{\bf K}(\eta_h(\widetilde{\alpha_{i,h}}))-
\sigma_{\bf K}(\eta_h(\widetilde{\mathrm{id}_h}))&=&
\widehat{\eta(\tilde{\alpha_i})}-\widehat{\sigma(\eta(\tilde{\alpha_i}))}-
\widehat{\sigma(\eta(\tilde{\mathrm{id}}))}.
\vspb
 \end{eqnarray*}
We conclude that $r_h$ coincides with the scalar deformation $\hat r$ of $r$. 
Denote the symmetric part of $r_h$ by $\beta_h$. Then by the second equations in
(\ref{ehi}) and (\ref{sehi}), we have
\vspb
$$\beta_h=\pm\frac{\eta_h(\widetilde{\mathrm{id}_h})+\sigma_{\bf
        K}(\eta_h(\widetilde{\mathrm{id}_h}))}{2}=\pm\frac{\widehat{\eta(\widetilde{\mathrm{id}})}+\sigma_{\bf K}(\widehat{\eta(\tilde{\mathrm{id}})})}{2}.
\vspb
$$
So we deduce that the symmetric part of $r_h$ is $\hat{A}_{h}\ltimes_{-R_{\odot_{h}}^{\star},-L_{\odot_{h}}^{\star}}(A^{\oplus 2})^{\star}_{h}$-invariant by
applying Lemma \mref{idinv} to the topologically free associative algebra  $\hat{A}_{h}$.
 Then it is clear that (\ref{w2.3}) holds.
\end{proof}

\noindent {\bf Acknowledgements.} This work is supported by NSFC
(11931009, 12271265, 12261131498), Fundamental Research
Funds for the Central Universities and Nankai Zhide Foundation.

\noindent
{\bf Declaration of interests. } The authors have no conflicts of interest to disclose.

\noindent
{\bf Data availability. } Data sharing is not applicable as no new data were created or analyzed.

\vspb

\end{document}